\definecolor{DarkBlue}{rgb}{0.2,0.2,0.4}
\theoremstyle{plain}
    \newtheorem{theorem}[equation]{Theorem}
    \newtheorem{lemma}[equation]{Lemma}
    \newtheorem{corollary}[equation]{Corollary}
    \newtheorem{proposition}[equation]{Proposition}
   \newtheorem*{theorem*}{Theorem}
 \theoremstyle{definition}
    \newtheorem{definition}[equation]{Definition}
    \newtheorem{remark}[equation]{Remark}
        \newtheorem*{remark*}{Remark}
\theoremstyle{remark}
\numberwithin{equation}{section}
\DeclareMathOperator{\ad}{ad}
\DeclareMathOperator{\End}{End}
\DeclareMathOperator{\Hom}{Hom}
\DeclareMathOperator{\Spin}{Spin}
 \DeclareMathOperator{\Ind}{Ind}
    \newcommand{\Dirac}{H_{\mathrm{Dirac}}}
\DeclareMathOperator{\Aut}{Aut}
\newcommand{\WBullet}{\,\,\,\bullet\,\,\,}
\begin{document}

\newcommand{\Spinc}{\Spin^c}
\newcommand{\Todo}{\textbf{To do}}

    \newcommand{\R}{\mathbb{R}}
    \newcommand{\C}{\mathbb{C}} 
    \newcommand{\N}{\mathbb{N}}
    \newcommand{\Z}{\mathbb{Z}} 
    \newcommand{\Q}{\mathbb{Q}}
    \newcommand{\bK}{\mathbb{K}}

\newcommand{\g}{\mathfrak{g}}
\newcommand{\h}{\mathfrak{h}}
\newcommand{\p}{\mathfrak{p}}
\newcommand{\kg}{\mathfrak{g}} 
\newcommand{\kt}{\mathfrak{t}}
\newcommand{\kA}{\mathfrak{A}}
\newcommand{\XX}{\mathfrak{X}}
\newcommand{\kh}{\mathfrak{h}} 
\newcommand{\kp}{\mathfrak{p}}
\newcommand{\kk}{\mathfrak{k}}
\newcommand{\kq}{\mathfrak{q}}
\newcommand{\ku}{\mathfrak{u}}
\newcommand{\kl}{\mathfrak{l}}
\newcommand{\kn}{\mathfrak{n}}
\newcommand{\ks}{\mathfrak{s}}
\newcommand{\ka}{\mathfrak{a}}
\newcommand{\km}{\mathfrak{m}}
\newcommand{\tr}{\mathrm{Trace}}

\newcommand{\cE}{\mathcal{E}}
\newcommand{\cA}{\mathcal{A}}
\newcommand{\calL}{\mathcal{L}}
\newcommand{\calH}{\mathcal{H}}
\newcommand{\cO}{\mathcal{O}}
\newcommand{\cB}{\mathcal{B}}
\newcommand{\cK}{\mathcal{K}}
\newcommand{\cP}{\mathcal{P}}
\newcommand{\calD}{\mathcal{D}}
\newcommand{\cF}{\mathcal{F}}
\newcommand{\cX}{\mathcal{X}}
\newcommand{\calM}{\mathcal{M}}
\newcommand{\calS}{\mathcal{S}}
\newcommand{\cU}{\mathcal{U}}

\newcommand{\Hilbert}{\mathcal{H}}

\newcommand{\Sj}{ \sum_{j = 1}^{\dim G}}
\newcommand{\Sk}{ \sum_{k = 1}^{\dim M}}
\newcommand{\ii}{\sqrt{-1}}

\newcommand{\ddt}{\left. \frac{d}{dt}\right|_{t=0}}

\newcommand{\PM}{P}
\newcommand{\DM}{D}
\newcommand{\LM}{L}
\newcommand{\vM}{v}

\newcommand{\Wedge}{\gamma}

\newcommand{\specialin}{\hspace{-1mm} \in \hspace{1mm} }

\newcommand{\beq}[1]{\begin{equation} \label{#1}}
\newcommand{\eeq}{\end{equation}}
\newcommand{\bspl}{\[ \begin{split}}
\newcommand{\espl}{\end{split} \]}

\newcommand{\Utilde}{\widetilde{U}}
\newcommand{\Xtilde}{\widetilde{X}}
\newcommand{\Dtilde}{\widetilde{D}}
\newcommand{\Etilde}{\widetilde{S}}
\newcommand{\wt}{\widetilde}
\newcommand{\Rep}{\mathrm{Rep}}
\newcommand{\ess}{\mathrm{ess}}

\newcommand{\Rhat}{\widehat{R}}

\title[On the Connes-Kasparov Isomorphism, II]{On the Connes-Kasparov Isomorphism, II:\\ The Vogan Classification of Essential Components in the Tempered Dual}

\author{Pierre Clare}
\author{Nigel Higson}
\author{Yanli Song}

\date{\today}

\begin{abstract}
This is the second of two papers dedicated to the computation of the reduced $C^*$-algebra of a connected, linear,
real reductive  group up to $C^*$-algebraic Morita equivalence, and the verification of the Connes-Kasparov conjecture in operator $K$-theory for these groups. In Part I we presented the  Morita equivalence and   the Connes-Kasparov morphism. In this part we shall compute the morphism using David Vogan's description  of the tempered dual.  In fact we shall go further by giving a complete representation-theor\-etic description and parametrization, in Vogan's terms, of the essential components of the tempered dual, which carry the $K$-theory of the tempered dual.
\end{abstract}

\maketitle

\section{Introduction}
This is the second and concluding part of a work  whose objectives are  to  determine the reduced $C^*$-algebra of a connected, linear, real reductive group, up to Morita equivalence, compute its $K$-theory, and verify that the Connes-Kasparov index homomorphism in $K$-theory is an isomorphism.  

For further background on this problem we refer the reader to the introduction to Part I of this work \cite{ConnesKasparovPaper1}.
We showed there if $G$ is a real reductive group, then:

\begin{enumerate}[\rm (i)]

\item   The reduced $C^*$-algebra of $G$  may be described, up to C*-algebra isomorphism, in a way that neatly encapsulates the major results of tempered representation theory, as  developed by Harish-Chandra, Langlands, Knapp, Stein and others. 

\item Using the Knapp-Stein theory of the $R$-group and following Wassermann \cite{NoteWassermann}, the reduced group $C^*$-algebra may be described in a more streamlined way, up to Morita equivalence, and   its $K$-theory may be computed.  Only certain 
components of the tempered dual  contribute to $C^*$-algebra $K$-theory; these are the \emph{essential} components, and they may be characterized using the Knapp-Stein theory.

\item Contingent on a \emph{matching theorem} that pairs Clifford algebra data associated to a maximal compact subgroup of $G$ with the above essential components,  the $K$-theoretic indexes of the  Dirac operators associated to irreducible spin modules \cite[Def.~5.2]{ConnesKasparovPaper1} freely generate the $K$-theory   of the reduced group $C^*$-algebra.  This is the Connes-Kasparov isomorphism.
\end{enumerate}
In this paper we shall complete our  account of the Connes-Kasparov isomorphism in \cite{ConnesKasparovPaper1} by proving the matching theorem.\footnote{Actually, there are some other, lesser, issues outstanding from \cite{ConnesKasparovPaper1}.  We shall resolve those too,   in  Section~\ref{sec-more-on-dirac-cohomology}.} 
The main results    are:
\begin{enumerate}[\rm (i)]
\setcounter{enumi}{3}
\item A uniform construction of all the essential components of the tempered dual. 

\item A  parametrization of the essential components of the tempered dual in terms of dominant weights for a maximal compact subgroup of $G$.
\end{enumerate}
See Theorems~\ref{thm-classification-of-essential-discrete-data} and \ref{thm-essential-components-correspond-to-essential-data}.  Our proofs will  make very extensive use of   David Vogan's approach to the representation theory of $G$ \cite{Vogan79,Voganbook}, which involves minimal $K$-types and cohomological induction.  This is in contrast to most of the previous work on $C^*_r(G)$, which has used Harish-Chandra's approach, via discrete series representations and cuspidal parabolic induction.\footnote{A very noteworthy exception is Afgoustidis's construction of the Mackey bijection \cite{AfgoustidisMackeyBijection}, and the ensuing proof of the Connes-Kasparov conjecture \cite{AfgoustidisConnesKasparov} following \cite{Higson_Mackey}. Afgoustidis uses Vogan's theory of minimal $K$-types in an essential way.}

In brief, Vogan classifies the components of the tempered dual using  what we shall call sets of \emph{Vogan data}; we shall review this concept and the classification in Sections 2, 3 and 4.  In Sections 5 and 6 we shall describe how to list, and indeed construct,    the Vogan data that correspond to essential components.  
The parameters that we  use   turn out to be  \emph{exactly the same} as the parameters used to construct irreducible spin modules. This puts the proof of the matching theorem  within  reach. The proof is completed   in Section~\ref{sec-dirac-cohomology} by appealing to a result in \cite{DongHuang15} that relates cohomological induction, which is part of Vogan's classification method, to the Dirac operator.

\subsection*{Notes on Terminology} We shall generally follow the teminology and notation of Vogan's monograph \cite{Voganbook}. Thus throughout the paper, $G$ will be a  linear real reductive group in the sense of  \cite[Sec.~0.1]{Voganbook}, except that      we shall in addition assume that \emph{$G$ is connected as a real Lie group}, which Vogan does not. 

Throughout the  paper we shall work with a fixed  Cartan involution $\theta$ of $G$ and denote by  $K \subseteq G$ the (necessarily connected) maximal compact subgroup  that is fixed by $\theta$.  We shall also work with a fixed maximal torus $T^c\subseteq K$.

We shall denote  real Lie algebras of real groups by fraktur letters decorated with the subscript $0$, as in $ \mathfrak{k}_0\subseteq \mathfrak{g}_0$.  The same fraktur letters, but without the subscript $0$, will be used to denote complexified Lie algebras.  Other fraktur letters without the subscript $0$ will be used to denote other complex Lie algebras.

We shall write the Lie algebraic Cartan decomposition  as $\mathfrak{g}_0 = \mathfrak{k}_0 \oplus \mathfrak{s}_0$ (which is a slight departure from Vogan's notation)  and we shall fix, once and for all, a  nondegenerate, $G$-invariant symmetric bilinear form 
\begin{equation}
\label{eq-bilinear-form}
\langle\,\,,\,\rangle \colon \mathfrak{g}_0\times \mathfrak{g}_0 \longrightarrow \R
\end{equation}
that is positive-definite on $\mathfrak{s}_0$ and negative-definite on $\mathfrak{k}_0$.   We shall use various forms induced from \eqref{eq-bilinear-form},  most notably the associated positive-definite inner product on the space  $i \mathfrak{t}^{c*}_0 = \Hom_{\R} ( \mathfrak{t}^c_0, i \R)$.

Finally we shall denote by $\Delta(\mathfrak{k},\mathfrak{t}^c)$ the set of roots for $(\mathfrak{k},\mathfrak{t}^c)$. We shall fix, once and for all, a system of positive roots $\Delta^+(\mathfrak{k},\mathfrak{t}^c)$. The term \emph{dominant} (which means $\langle \lambda, \alpha\rangle \ge 0$ for all positive roots $\alpha$) will always mean \emph{with respect to this fixed system of positive roots}.  We shall use the standard notation $\rho(\Delta^+(\mathfrak{k},\mathfrak{t}^c))$ for the half-sum of positive roots, and we shall use similar notation in other contexts, where we shall always count roots with multiplicities. This is all as in \cite{Voganbook}.


\section{Theta-Stable Parabolic Subalgebras} 
\label{sec-theta-stable-parabolics}

Vogan's monograph \cite{Voganbook} uses Zuckerman's method of  cohomological induction to construct the entire admissible dual of a real reductive group, and in particular the tempered dual.  In this section and the next we shall quickly review some of the basic  concepts that are involved. 

\begin{definition}[{See \cite[Def.\,5.2.1]{Voganbook}}] A  \emph{$\theta$-stable parabolic subalgebra} of $\mathfrak{g}$ is any Lie subalgebra  $\mathfrak{q}\subseteq \mathfrak{g}$ defined by a weight $\lambda\in i \mathfrak{t}_0^{c*}$ using the formula 
\[
 \mathfrak{q} = \bigoplus  \bigl \{ \mathfrak{g}_\beta : \beta \in i \mathfrak{t}^{c*}_0\,\, \text{and} \,\, \langle \lambda , \beta \rangle  \ge 0 \bigr \} ,
\]
where $\mathfrak{g}_\beta\subseteq \g$ is the $\beta$-weight space for the adjoint action of $\mathfrak{t}^c _0$ on $\mathfrak{g}$.  We shall say that $\mathfrak{q} $ is \emph{defined} by $\lambda$.
\end{definition}

The subalgebra $\mathfrak{q}$ decomposes as semidirect product $\mathfrak{q} = \mathfrak{l} + \mathfrak{u}$,  where 
\begin{equation}
    \label{eq-def-of-l}
 \mathfrak{l} = \bigoplus  \bigl \{ \mathfrak{g}_\beta : \beta \in i \mathfrak{t}^{c*}_0\,\, \text{and} \,\, \langle \lambda , \beta \rangle  =  0 \bigr \} 
\end{equation}
and 
\begin{equation}
    \label{eq-def-of-u}
 \mathfrak{u} = \bigoplus  \bigl \{ \mathfrak{g}_\beta : \beta \in i \mathfrak{t}^{c*}_0\,\, \text{and} \,\, \langle \lambda , \beta \rangle  > 0 \bigr \} .
\end{equation}
One has $\mathfrak{l} = \mathfrak{q} \cap \overline{\mathfrak{q}}$, so that $\mathfrak{l}$ is the complexification of a real algebra $\mathfrak{l}_0$. 

Throughout the paper, given a $\theta$-stable parabolic subalgebra $\mathfrak{q}$, we shall follow Vogan and denote by $L$ the normalizer of $\mathfrak{q}$ in $G$.  This is a connected Lie subgroup of $G$ and a reductive group in its own right, stable under the Cartan involution of $G$. Its Lie algebra is $\mathfrak{l}_0$.
 
We shall now present the examples of $\theta$-stable parabolic subalgebras that will be of  concern to us in this paper. 

\begin{proposition}
\label{prop-strictly-dominant-weight}
If the parabolic subalgebra $\mathfrak{q} \subseteq \mathfrak{g}$ is defined by a strictly dominant weight $\lambda\in i \mathfrak{t}^{c*}_0$, meaning that $\langle \lambda, \alpha\rangle > 0$ for all $\alpha\in \Delta^+(\mathfrak{k},\mathfrak{t}^c)$, then there is an isomorphism of Lie algebras 
\begin{equation*} 
\mathfrak{l}_0 \cong \mathfrak{sl}(2,\R) \oplus \cdots \oplus \mathfrak{sl}(2,\R) \oplus \mathfrak{z}_0
\end{equation*}
with the following properties: 
\begin{enumerate}[\rm (i)]

\item The summand $\mathfrak{z}_0$ is the center of $\mathfrak{l}_0$, and the isomorphism is the identity on $\mathfrak{z}_0$.

\item The isomorphism is compatible with the Cartan involution on $\mathfrak{l}_0$ that is obtained by restriction from $\mathfrak{g}_0$ and the standard Cartan involutions on $\mathfrak{sl}(2,\R)$-summands.

\item The summands on the right are orthogonal for the nondegenerate bilinear form on $\mathfrak{l}_0$ that is obtained by restriction from $\mathfrak{g}_0$.

\end{enumerate} 
 \end{proposition}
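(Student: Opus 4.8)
The plan is to use the strict dominance of $\lambda$ to pin down the maximal compact subalgebra of the reductive Lie algebra $\mathfrak{l}_0$, and then to read off its structure from the classification of simple real Lie algebras. \emph{Step 1: the maximal compact subalgebra of $\mathfrak{l}_0$ is $\mathfrak{t}^c_0$.} Complexifying, $\mathfrak{l}\cap\mathfrak{k}$ is spanned by $\mathfrak{t}^c$ together with the root spaces $\mathfrak{g}_\beta$ with $\beta\in\Delta(\mathfrak{k},\mathfrak{t}^c)$ and $\langle\lambda,\beta\rangle=0$. But strict dominance says $\langle\lambda,\alpha\rangle>0$ for every $\alpha\in\Delta^+(\mathfrak{k},\mathfrak{t}^c)$, hence $\langle\lambda,\beta\rangle\ne 0$ for \emph{every} compact root $\beta$; so no compact root space lies in $\mathfrak{l}$, giving $\mathfrak{l}\cap\mathfrak{k}=\mathfrak{t}^c$, i.e. $\mathfrak{l}_0\cap\mathfrak{k}_0=\mathfrak{t}^c_0$. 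Since $L$ is reductive and $\theta$-stable, $\theta|_{\mathfrak{l}_0}$ is a Cartan involution of $\mathfrak{l}_0$, so its fixed subalgebra $\mathfrak{l}_0\cap\mathfrak{k}_0=\mathfrak{t}^c_0$ is a maximal compact subalgebra of $\mathfrak{l}_0$; in particular it is abelian.

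\emph{Step 2: each simple factor of $[\mathfrak{l}_0,\mathfrak{l}_0]$ is $\mathfrak{sl}(2,\R)$.} Write $\mathfrak{l}_0=\mathfrak{z}_0\oplus[\mathfrak{l}_0,\mathfrak{l}_0]$, the canonical decomposition of a reductive Lie algebra into its center and its semisimple derived ideal, and decompose $[\mathfrak{l}_0,\mathfrak{l}_0]=\bigoplus_i\mathfrak{l}_0^{(i)}$ into simple ideals. The maximal compact subalgebra of $[\mathfrak{l}_0,\mathfrak{l}_0]$ is $\mathfrak{t}^c_0\cap[\mathfrak{l}_0,\mathfrak{l}_0]$, which is abelian, so each $\mathfrak{l}_0^{(i)}$ has abelian maximal compact subalgebra. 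A compact simple Lie algebra is its own maximal compact subalgebra and is non-abelian, so no $\mathfrak{l}_0^{(i)}$ is compact; and the only noncompact simple real Lie algebra whose maximal compact subalgebra is abelian is $\mathfrak{sl}(2,\R)$ (equivalently, the hyperbolic plane is the only irreducible Riemannian symmetric space of noncompact type with abelian isotropy). Hence $\mathfrak{l}_0^{(i)}\cong\mathfrak{sl}(2,\R)$ for every $i$, and combining these isomorphisms with the identity on $\mathfrak{z}_0$ yields the asserted isomorphism and property (i).

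\emph{Step 3: properties (ii) and (iii).} For (iii), $G$-invariance makes $\langle\,,\,\rangle$ invariant under $\ad(\mathfrak{l}_0)$, and the standard computation ($\langle z,[x,y]\rangle=-\langle[x,z],y\rangle=0$ for $z$ central, and $\langle\mathfrak{l}_0^{(i)},\mathfrak{l}_0^{(j)}\rangle=0$ for $i\ne j$ since these ideals commute and each is its own derived subalgebra) shows the summands are pairwise orthogonal; nondegeneracy of the form on each summand then follows from its nondegeneracy on $\mathfrak{l}_0$, which holds because distinct $\mathfrak{t}^c$-weight spaces $\mathfrak{g}_\beta,\mathfrak{g}_\gamma$ with $\beta+\gamma\ne 0$ are orthogonal, forcing $\mathfrak{l}^{\perp}=\mathfrak{u}\oplus\overline{\mathfrak{u}}$ inside $\mathfrak{g}$. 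For (ii), $\theta$ restricts to an automorphism of $\mathfrak{l}_0$ permuting the simple ideals $\mathfrak{l}_0^{(i)}$; a transposition of two of them would force the fixed subalgebra of $\theta$ to contain a diagonally embedded copy of $\mathfrak{sl}(2,\R)$, contradicting the fact that $\mathfrak{l}_0\cap\mathfrak{k}_0$ is abelian, so $\theta$ preserves each $\mathfrak{l}_0^{(i)}$, acting there as a Cartan involution; since all Cartan involutions of $\mathfrak{sl}(2,\R)$ are conjugate under $\mathrm{Inn}(\mathfrak{sl}(2,\R))$, we may adjust each isomorphism $\mathfrak{l}_0^{(i)}\to\mathfrak{sl}(2,\R)$ so that it carries $\theta$ to the standard Cartan involution, while on $\mathfrak{z}_0$ there is nothing to check since the isomorphism is the identity there.

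I expect the only non-formal ingredient to be the classification input in Step 2 — that an abelian maximal compact subalgebra forces the simple factor to be $\mathfrak{sl}(2,\R)$, and rules out compact and higher-rank factors — together with the $\theta$-equivariance bookkeeping in Step 3; everything else is routine weight-space and invariant-form manipulation.
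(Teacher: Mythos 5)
Your proof is correct, but it reaches the $\mathfrak{sl}(2,\R)$-structure by a genuinely different route than the paper. Your Step 1 is exactly the paper's opening move: strict dominance gives $\mathfrak{l}\cap\mathfrak{k}=\mathfrak{t}^c$. After that the paper stays elementary and computational: it decomposes $\mathfrak{l}$ into $\mathfrak{t}^c$-weight spaces (all nonzero ones lying in $\mathfrak{s}$), shows by the standard rank-one arguments that $\dim\mathfrak{l}_\beta=\dim\mathfrak{l}_{-\beta}=\dim[\mathfrak{l}_\beta,\mathfrak{l}_{-\beta}]=1$, so each pair $\pm\beta$ spans a copy of $\mathfrak{sl}(2,\C)$ stable under $\theta$ and complex conjugation, hence the complexification of $\mathfrak{sl}(2,\R)$ with its standard Cartan involution, and then checks that distinct pairs commute and are orthogonal. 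You instead argue structurally: $\theta$ restricts to a Cartan involution of $\mathfrak{l}_0$ with abelian fixed algebra $\mathfrak{t}^c_0$, and you invoke the classification-type fact that a noncompact simple real Lie algebra with abelian maximal compact subalgebra is $\mathfrak{sl}(2,\R)$ (compact simple factors being excluded because they are nonabelian), then fix up (ii) and (iii) by conjugacy of Cartan involutions and the usual invariance computations. Both arguments are sound; yours is shorter on the core step but leans on external inputs (restriction of Cartan involutions to $\theta$-stable reductive subalgebras, irreducibility of the isotropy representation or the symmetric-space classification), whereas the paper's computation is self-contained and, importantly, directly exhibits the one-dimensional weight spaces and the root pairs $\pm\beta$ attached to each $\mathfrak{sl}(2,\R)$-summand, data that later arguments (e.g.\ Proposition~\ref{prop-theta-stable-data-from-dominant-weight} and Lemma~\ref{lem-fine-k-type-for-special-l}) quote explicitly; with your approach that weight data would have to be re-derived from the isomorphism. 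One small ordering point: in your Step 2 you conclude that each simple ideal has abelian maximal compact subalgebra before ruling out that $\theta$ permutes the ideals; this is harmless, since a $\theta$-swapped pair would place a diagonally embedded (nonabelian) copy of a simple ideal inside the abelian fixed algebra, which is exactly the argument you give in Step 3, but it would read better stated once, up front.
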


\begin{remark}
The $\mathfrak{sl}(2,
\R)$-summands on the right-hand side correspond to simple ideals in $\mathfrak{l}_0$, so the isomorphism is canonical up to permutation of the $\mathfrak{sl}(2,\R)$ summands and automorphisms of $\mathfrak{sl}(2,\R)$ preserving the Cartan involution.
\end{remark}

\begin{remark}
If $\lambda$ is strictly dominant not just for $\Delta^+(\mathfrak{k},\mathfrak{t}^c)$ but indeed for some system  of positive roots for the action of $\mathfrak{t}^c$ on $\mathfrak{g}$, then $\mathfrak{l}_0$ is in fact abelian; that is,  it has  no $\mathfrak{sl}(2,\R)$-summands. This is in fact the generic case, and it is very easy to analyze from the point of view of this paper. But it is not quite general enough for our purposes.
\end{remark}

\begin{proof}[Proof of Proposition~\ref{prop-strictly-dominant-weight}]
The hypothesis that $\lambda$ is \emph{strictly} dominant and the definition of $\mathfrak{l}$ imply that $\mathfrak{l}\cap \mathfrak{k} = \mathfrak{t}^c$. It follows that if we decompose $\mathfrak{l}$ into weight spaces for the adjoint action of $\mathfrak{t}^c$, 
\[
\mathfrak{l} = \mathfrak{t}^c \oplus \bigoplus _{\beta} \mathfrak{l}_\beta,
\]
then $\mathfrak{l}_\beta\subseteq \mathfrak{s}$ for all $\beta$. Hence the Cartan involution  $\theta$
is $-1$ on each $\mathfrak{l}_\beta$. 

Let us examine the weight spaces $\mathfrak{l}_\beta$ for $\beta \ne 0$. 
Minor variations on the standard arguments for semisimple Lie algebras (as in for example \cite[Sec.~VI.2]{Serre}) show that 
\[
\dim (\mathfrak{l}_\beta ) = 
\dim (\mathfrak{l}_{-\beta} ) = 
\dim ([\mathfrak{l}_\beta,\mathfrak{l}_{-\beta}]  ) = 1.
\]
So $\mathfrak{l}_{\beta}$,  $\mathfrak{l}_{-\beta}$ and $[ \mathfrak{l}_{\beta}, \mathfrak{l}_{-\beta}]$ span a $3$-dimensional Lie algebra isomorphic to $\mathfrak{sl}(2,\C)$.  It is stable under both the Cartan involution and complex conjugation, which switches $\mathfrak{l}_\beta$ and $\mathfrak{l}_{-\beta}$. So the Lie algebra is isomorphic to the complexification of  $\mathfrak{sl}(2,\R)$ with its standard Cartan involution.

If $\beta_1\ne -\beta_2$, then $[\mathfrak{l}_{\beta_1}, \mathfrak{l}_{\beta_2}] =0 $ since 
\[
[\mathfrak{l}_{\beta_1}, \mathfrak{l}_{\beta_2}] \subseteq \mathfrak{k} \quad \text{and} \quad 
[\mathfrak{l}_{\beta_1}, \mathfrak{l}_{\beta_2}]
\subset \mathfrak{l}_{\beta_1+ \beta_2}.
\]
Moreover $[ \mathfrak{l}_{\beta_1}, \mathfrak{l}_{-\beta_1}]$ and $[ \mathfrak{l}_{\beta_2}, \mathfrak{l}_{-\beta_2}]$ are orthogonal subspaces of $\mathfrak{t}^c$.  The proposition follows from these observations.
\end{proof}

At the Lie group level, we obtain from the isomorphism  of Lie algebras in Proposition~\ref{prop-strictly-dominant-weight}  a morphism
\begin{equation}
    \label{eq-morphism-from-product-of-sl2-lie-groups}
SL(2,\R) \times \cdots \times SL(2,\R) \times Z(L)^0 
\longrightarrow L
\end{equation}
(the superscript denotes the connected component of the identity) that is surjective and a local isomorphism.  The kernel is central, as it is for any local isomorphism, and its projection onto the product of $SL(2,\R)$-factors is injective because $Z(L)\subseteq L$. So the kernel identifies  via this projection with a subgroup of the (finite) group generated by the matrices 
$ \left [ \begin{smallmatrix}
-1 & 0 \\ 0 & -1 
\end{smallmatrix}\right ]
$
in each $SL(2,\R)$-factor. The maximal torus $T^c$, which is a subgroup of $L$, is the image of the corresponding morphism 
\begin{equation}
    \label{eq-morphism-from-product-of-so(2)-lie-groups}
SO(2) \times \cdots \times SO(2) \times (T^c\cap Z(L)^0 )
\longrightarrow L
\end{equation}
in which the final factor is the maximal compact subgroup of $Z(L)^0$, and is therefore connected.
\section{Vogan Data} 
\label{sec-discrete-theta-stable-data}

The following definition  will be used in the next section to construct  representations of $G$ using a two-step process that goes  from characters of a Cartan subgroup $H{\subseteq}G$ to representations of an intermediate group $L$ by means of parabolic induction, and then  from these representations of $L$ to representations of $G$ by means of  cohomological induction.

\begin{definition}[{See \cite[Def.~6.5.1]{Voganbook}}]  
\label{def-theta-stable-data}
A triple $(\mathfrak{q}, H, \delta)$ is a set of  \emph{Vogan data} for $G$ (Vogan uses the term \emph{discrete, $\theta$-stable data}) if
\begin{enumerate}[\rm (i)]
    \item 
    $\mathfrak{q} = \mathfrak{l} + \mathfrak{u}$ is a $\theta$-stable  parabolic subalgebra of $\mathfrak{g}$. 
    
    \item 
    $H$ is a $\theta$-stable Cartan subgroup of $G$ (it is necessarily abelian under our assumptions on $G$, but not necessarily connected) and a subgroup of $L=N_G(\mathfrak{q})$.

  \item  $L$ is \emph{quasi-split} (see \cite[Def.~4.3.5]{Voganbook}) and   $H$ is a \emph{maximally split Cartan subgroup} of $L$.  Altogether, this means that if we write $H = T A$, where $T = H \cap K$ and $A=  \exp [\mathfrak{h}_0\cap  \mathfrak{s}_0]$, then $H=TA$ is the Levi factor of  a minimal (real) parabolic subgroup $P= TAN$ of $L$.   
  
    \item
    $\delta\colon T\to U(1) $ is a  \emph{fine} representation of    $T$   with respect to $L$, in the sense of  \cite[Definition 4.3.8]{Voganbook}.   This means that $\delta$ is trivial on the connected component of the identity in the intersection of $T$ with   the semisimple part of $L$.
    
    \item 
    If $\lambda^L \in i \mathfrak{t}^*_0 = \Hom_{\R} (\mathfrak{t}_0 , i \R )$ is the differential of $\delta$, and if
    \[
    \lambda^G = \lambda ^L + \rho (\mathfrak{u},\mathfrak{t}) ,
    \]
    then it is required that $
    \langle \lambda ^G , \alpha \rangle > 0
    $
    for all weights  $\alpha$ for the adjoint action of $\mathfrak{t}$ on $\mathfrak{u}$. (The positive-definite inner product here is on $i \mathfrak{t}^{*}_0$, and it   is again obtained from the bilinear form on $\mathfrak{g}$. In addition $\rho (\mathfrak{u},\mathfrak{t})$ is the half-sum of the weights of the action of $\mathfrak{t}$ on $\mathfrak{u}$, multiplicities included.)
\end{enumerate}
\end{definition} 
 
Examples of Vogan data may be constructed using the following development of the computation in Proposition~\ref{prop-strictly-dominant-weight}. They represent all the examples that we shall need to study in detail in this paper.

\begin{proposition}
 \label{prop-theta-stable-data-from-dominant-weight}
Let  $\kappa\in  i \mathfrak{t}_0^{c*}$ be a dominant weight, and let $\mathfrak{q} =\mathfrak{l} + \mathfrak{u} $  be the $\theta$-stable parabolic subalgebra defined by the strictly dominant weight 
\[
\lambda^G  = \kappa + \rho(\Delta^+(\mathfrak{k},\mathfrak{t}^c)) .
\]
From the isomorphism in Proposition~\textup{\ref{prop-strictly-dominant-weight}}, the nonzero weights for the action of $\mathfrak{t}^c $ on $\mathfrak{l}$ consist of a collection of pairs $\pm \beta$, one for each $\mathfrak{sl}(2,\R)$-summand in $\mathfrak{l}_0$.  Denote by  $\Delta^+(\mathfrak{l},\mathfrak{t}^c)$ be any set consisting of one member from each pair. 
Let $H=TA$ be a maximally split Cartan subgroup of $L=N_G (\mathfrak{q})$. If the weight  
\[
\mu = \kappa - \rho (  \mathfrak{s}\cap\mathfrak{u}, \mathfrak{t}^c) - \rho (\Delta^+(\mathfrak{l}, \mathfrak{t}^c))
\]
is analytically integral, and if $\delta$ is the restriction to $T$ of  the   character $\exp(\mu)$ of $T^c$ associated to $\mu$, then  $(\mathfrak{q},H,\delta)$ is a set of Vogan data for $G$.   Whether or not  $\mu$ is integral is independent of the choice of $\Delta^+(\mathfrak{l}, \mathfrak{t}^c)$, and, assuming that $\mu$ is integral, the character $\delta$ is independent of the choice of $\Delta^+(\mathfrak{l}, \mathfrak{t}^c)$, too. 
\end{proposition}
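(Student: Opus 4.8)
The plan is to check conditions (i)--(v) of Definition~\ref{def-theta-stable-data} in turn and then dispose of the two independence assertions; the essential input is Proposition~\ref{prop-strictly-dominant-weight} and the passage to the $SL(2,\R)$-factors of $L$ described after it. Since $\lambda^G=\kappa+\rho(\Delta^+(\mathfrak{k},\mathfrak{t}^c))$ is strictly dominant, that proposition applies, giving $\mathfrak{l}\cap\mathfrak{k}=\mathfrak{t}^c$ and a $\theta$-equivariant orthogonal isomorphism $\mathfrak{l}_0\cong\mathfrak{sl}(2,\R)\oplus\cdots\oplus\mathfrak{sl}(2,\R)\oplus\mathfrak{z}_0$. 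I take $H=TA$ to be the $\theta$-stable maximally split Cartan of $L$ adapted to this decomposition --- the split Cartan in each $\mathfrak{sl}(2,\R)$-summand together with $Z(L)^0$; then $\mathfrak{t}_0=\mathfrak{h}_0\cap\mathfrak{k}_0=\mathfrak{z}_0\cap\mathfrak{k}_0$, and from the description of $T^c$ in Section~\ref{sec-theta-stable-parabolics} one gets $T=H\cap K\subseteq T^c$, so that $\delta=\exp(\mu)|_T$ is defined (this inclusion $T\subseteq T^c$ is implicit in the statement). Writing $\mathfrak{t}^c_0=\mathfrak{t}_0\oplus\bigoplus_i\mathfrak{so}(2)_i$ for the induced orthogonal decomposition of the compact Cartan, two remarks will be used repeatedly: first, the nonzero roots of $\mathfrak{l}$ are the pairs $\pm\beta_i$, one per $\mathfrak{sl}(2,\R)$-summand (Proposition~\ref{prop-strictly-dominant-weight} and its proof), each $\beta_i$ spanning the one-dimensional line $i\mathfrak{so}(2)_i^*$, so that $\langle\lambda^G,\beta_i\rangle=0$ (by the definition of $\mathfrak{l}$) forces $\lambda^G$ to lie in $i\mathfrak{t}_0^*$; second, the finite part $F$ of $T$ lies in the product of the order-two subgroups $\{\pm I\}$ of the $SL(2,\R)$-factors, each of which sits inside the corresponding $SO(2)$. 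With these choices, (i) holds by construction, and (ii) holds because $H$ is a $\theta$-stable abelian Cartan subgroup of the Levi $L$, hence of $G$.

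For (iii): Proposition~\ref{prop-strictly-dominant-weight} exhibits $\mathfrak{l}_0$ as a direct sum of copies of $\mathfrak{sl}(2,\R)$, which is split, and an abelian algebra, so $L$ is quasi-split, and the adapted $H=TA$ is manifestly the Levi factor of a minimal parabolic subgroup of $L$ --- the product of the upper-triangular Borel subgroups of the $SL(2,\R)$-factors with $Z(L)^0$ --- which is what (iii) requires. For (iv): $T=F\cdot T^0$ with $T^0=\exp(\mathfrak{t}_0)$ central in $L$ (as $\mathfrak{t}_0\subseteq\mathfrak{z}_0$) and $F$ finite, so $T\cap[L,L]\subseteq F\cdot(Z(L)^0\cap[L,L])$ is finite; thus $(T\cap[L,L])^0$ is trivial and \emph{every} character of $T$, in particular $\delta$, is fine.

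The substance is condition (v). One computes $\lambda^L=d\delta=\mu|_{\mathfrak{t}_0}$. Since $\mathfrak{u}$ is an $\mathfrak{l}$-module and $\mathfrak{t}_0\subseteq\mathfrak{t}^c_0\subseteq\mathfrak{l}_0$, the weights of $\mathfrak{t}$ on $\mathfrak{u}$ (multiplicities included) are exactly the restrictions to $\mathfrak{t}_0$ of the weights of $\mathfrak{t}^c$ on $\mathfrak{u}$, so $\rho(\mathfrak{u},\mathfrak{t})=\rho(\mathfrak{u},\mathfrak{t}^c)|_{\mathfrak{t}_0}$; and $\mathfrak{u}$ being $\theta$-stable (with $\mathfrak{t}^c\subseteq\mathfrak{k}$), $\rho(\mathfrak{u},\mathfrak{t}^c)=\rho(\mathfrak{k}\cap\mathfrak{u},\mathfrak{t}^c)+\rho(\mathfrak{s}\cap\mathfrak{u},\mathfrak{t}^c)$. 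Because $\lambda^G$ is strictly dominant and defines $\mathfrak{q}$, the roots of $\mathfrak{k}$ occurring in $\mathfrak{u}$ are exactly those in $\Delta^+(\mathfrak{k},\mathfrak{t}^c)$, whence $\rho(\mathfrak{k}\cap\mathfrak{u},\mathfrak{t}^c)=\rho(\Delta^+(\mathfrak{k},\mathfrak{t}^c))$; and $\rho(\Delta^+(\mathfrak{l},\mathfrak{t}^c))=\tfrac12\sum_i\beta_i$ lies in $\bigoplus_i i\mathfrak{so}(2)_i^*$, hence restricts to $0$ on $\mathfrak{t}_0$. Feeding the formula for $\mu$ into $\lambda^L+\rho(\mathfrak{u},\mathfrak{t})$, the term $-\rho(\mathfrak{s}\cap\mathfrak{u},\mathfrak{t}^c)$ cancels the contribution $\rho(\mathfrak{s}\cap\mathfrak{u},\mathfrak{t}^c)$ from $\rho(\mathfrak{u},\mathfrak{t}^c)$, and
\[
\lambda^L+\rho(\mathfrak{u},\mathfrak{t})\;=\;\bigl(\kappa+\rho(\Delta^+(\mathfrak{k},\mathfrak{t}^c))\bigr)\big|_{\mathfrak{t}_0}\;=\;\lambda^G\big|_{\mathfrak{t}_0}\;=\;\lambda^G,
\]
the last equality because $\lambda^G$ already lies in $i\mathfrak{t}_0^*$. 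Finally, every weight $\alpha$ of $\mathfrak{t}$ on $\mathfrak{u}$ equals $\gamma|_{\mathfrak{t}_0}$ for some weight $\gamma$ of $\mathfrak{t}^c$ on $\mathfrak{u}$, and $\lambda^G$ is orthogonal to $\gamma-\gamma|_{\mathfrak{t}_0}$, so $\langle\lambda^G,\alpha\rangle=\langle\lambda^G,\gamma\rangle>0$ by the inequality defining $\mathfrak{u}$; this also shows $\alpha\ne 0$. Hence (v) holds, so $(\mathfrak{q},H,\delta)$ is a set of Vogan data. I expect this last computation to be the main obstacle --- and it is the place where the precise shape of $\mu$ is used: one has to notice that $\lambda^G$ is orthogonal to all the $\mathfrak{so}(2)_i$-directions and that the correction terms $-\rho(\mathfrak{s}\cap\mathfrak{u},\mathfrak{t}^c)$ and $-\rho(\Delta^+(\mathfrak{l},\mathfrak{t}^c))$ in $\mu$ are exactly those needed for $\lambda^L+\rho(\mathfrak{u},\mathfrak{t})$ to reproduce $\lambda^G$.

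For the independence assertions: replacing $\beta_i$ by $-\beta_i$ for $i$ in a subset $S$ changes $\mu$ by $\sum_{i\in S}\beta_i$, and each $\beta_i$, being a weight of the adjoint action of $T^c$ on $\mathfrak{g}$, is analytically integral; hence the integrality of $\mu$ does not depend on the choice of $\Delta^+(\mathfrak{l},\mathfrak{t}^c)$. Granting integrality, $\exp(\mu)$ gets multiplied by $\prod_{i\in S}\exp(\beta_i)$, and each $\exp(\beta_i)$ restricts trivially to $T$: it is trivial on $T^0=\exp(\mathfrak{t}_0)$ because $\beta_i$ vanishes on $\mathfrak{t}_0$, and trivial on the finite part $F\subseteq\prod_j\{\pm I\}$ because in each $SL(2,\R)$-factor $\exp(\beta_i)(-I)=e^{\pi\beta_i(X)}=1$, where $X$ generates $\mathfrak{so}(2)$ (so $-I=\exp(\pi X)$) and $\beta_i(X)\in\{0,\pm 2i\}$, the adjoint action of $\mathfrak{so}(2)$ on $\mathfrak{sl}(2,\C)$ having eigenvalues $0,\pm 2i$. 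Therefore $\delta$ is independent of the choice, and the proof is complete.
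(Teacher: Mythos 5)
Your proposal is correct and follows essentially the same path as the paper's own argument: exploit Proposition~\ref{prop-strictly-dominant-weight} to see that $\lambda^G$ vanishes on the $\mathfrak{so}(2)$-directions and hence lies in $i\mathfrak{t}_0^*$, identify $\lambda^L+\rho(\mathfrak{u},\mathfrak{t})$ with $\lambda^G$ using $\rho(\Delta^+(\mathfrak{l},\mathfrak{t}^c))\vert_{\mathfrak{t}}=0$, verify condition (v) by lifting weights of $\mathfrak{t}$ on $\mathfrak{u}$ to weights of $\mathfrak{t}^c$, and get fineness from the finiteness of $T\cap[L,L]$. The only (minor) divergence is the last step: you show the discrepancy character $\exp\bigl(\sum_{i\in S}\beta_i\bigr)$ restricts trivially to $T$, while the paper instead computes $\mu\vert_{\mathfrak{so}(2)_j}=-\tfrac12\beta_j$ and hence $\delta(m)=-1$ for every $m$ (a stronger fact it reuses to prove Proposition~\ref{prop-essential-from-dominant-weight}); for the statement as given, your computation is an equivalent and perfectly adequate substitute.
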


\begin{remark}
\label{rem-weights-from-u-and-l-give-positive-system-for-g}
Taken together, the weights in $\Delta (\mathfrak{u}, \mathfrak{t}^c)$ and $\Delta^{+} (\mathfrak{l}, \mathfrak{t}^c)$ form a system of positive roots for   $(\mathfrak{g},\mathfrak{t}^c)$   that includes $\Delta^+(\mathfrak{k},\mathfrak{t}^c)$.  Moreover  if $\Delta^+(\mathfrak{s},\mathfrak{t}^c)$ is the subset of  noncompact roots in $\Delta^+(\mathfrak{g},\mathfrak{t}^c)$, then 
\[
\rho(\Delta^+(\mathfrak{s},\mathfrak{t}^c))=
\rho (  \mathfrak{s}\cap\mathfrak{u}, \mathfrak{t}^c) + \rho (\Delta^+(\mathfrak{l}, \mathfrak{t}^c)).
\]
The weight $\lambda ^G $ is dominant for $\Delta^+(\mathfrak{g},\mathfrak{t}^c)$. Conversely,  all of the instances of the construction in this remark, using all possible $\Delta^{+} (\mathfrak{l}, \mathfrak{t}^c)$, yield all the systems of positive roots for $(\mathfrak{g},\mathfrak{t}^c)$ for which $\lambda ^G$ is dominant. 
\end{remark}

\begin{remark}
Before starting the proof we should clear up a  notational ambiguity.  In the statement of   Proposition~\ref{prop-theta-stable-data-from-dominant-weight}, $\lambda^G$ is defined as a weight in $\mathfrak{t}^{c*}_0$ in terms of the dominant weight  $\kappa\in  i \mathfrak{t}_0^{c*}$. Then a weight $\mu\in i \mathfrak{t}_0^{c,*}$ is defined in terms of $\lambda^G$ and a choice of positive positive roots, and finally a unitary character $\delta$ of $T$ is defined in terms of $\mu$.  But in Definition~\ref{def-theta-stable-data}, a weight $\lambda^G\in i \mathfrak{t}_0^*$  is defined   in terms of the differential of $\delta$. So we are using the same symbol $\lambda ^G$ twice. 

Let us check that these two uses are related in the following way: if the weight $\lambda ^G\in i \mathfrak{t}_0^*$ of Definition~\ref{def-theta-stable-data} is extended by $0$ on the orthogonal complement of $\mathfrak{t}_0$ in $\mathfrak{t}^c_0$, then one obtains the weight $\lambda^G\in i \mathfrak{t}_0^{c*}$ in the statement of  Proposition~\ref{prop-theta-stable-data-from-dominant-weight}.

First, if $\lambda^G \in  i \mathfrak{t}_0^{c*}$ is the weight from  Proposition~\ref{prop-theta-stable-data-from-dominant-weight}, then $\lambda^G \vert _{\mathfrak{t}^\perp} =0$.   To see this, observe that in terms of the direct sum decomposition 
\begin{equation*} 
\mathfrak{l}_0 \cong \mathfrak{sl}(2,\R) \oplus \cdots \oplus \mathfrak{sl}(2,\R) \oplus \mathfrak{z}_0
\end{equation*}
in 
Proposition~\ref{prop-strictly-dominant-weight}, no matter how the maximally split Cartan subgroup $H=TA$ of $L$ is chosen, the Lie algebra $\mathfrak{t}_0$ is the $\theta$-fixed part of $ \mathfrak{z}_0$.  As a result, in terms of the same direct sum decompostion, the orthogonal complement of $\mathfrak{t}_0$ in $\mathfrak{t}^c$ is 
\begin{equation}
\label{eq-dir-sum-for-t-perp}
\mathfrak{t}_0^\perp \cong \mathfrak{so}(2)    \oplus \cdots \oplus  \mathfrak{so}(2)  .
\end{equation}
Let  $\beta\in i \mathfrak{t}^{c*}_0$ be a nonzero weight for the action of $\mathfrak{t}^c$ on (the complexification of) one of the $\mathfrak{sl}(2,\R)$-summands of $\mathfrak{l}_0$ in the direct sum decomposition. The restriction of $\beta$ to the orthogonal complement of the corresponding $\mathfrak{so}(2)$-summand  in $\mathfrak{t}_0^\perp$ is zero.  Hence
\[
\langle \lambda^G, \beta \rangle 
=
\langle \lambda^G\vert_{\mathfrak{so}(2)} , \beta \vert_{\mathfrak{so}(2)}  \rangle_{\mathfrak{so}(2)} .
\]
But according to the  definition of 
$\mathfrak{l}$ in \eqref{eq-def-of-l},   $\langle \lambda^G, \beta \rangle =0$, and so  it follows that $\lambda^G \vert_{ \mathfrak{so}(2) } = 0$ for each $\mathfrak{so}(2)$-summand in \eqref{eq-dir-sum-for-t-perp}. Hence $\lambda ^G \vert_{\mathfrak{t}^\perp} = 0$.

Second, it follows from the definition of  $\lambda^G
\in  i \mathfrak{t}_0^{c*}$ in the statement of the proposition that
\[
\lambda^G 
     =  \kappa  + \rho( \mathfrak{k}\cap \mathfrak{u},\mathfrak{t}^c)    = \kappa     - \rho ( \mathfrak{s}\cap \mathfrak{u}, \mathfrak{t}^c) 
      + \rho (\mathfrak{u},\mathfrak{t}^c),
\]
where $\rho( \mathfrak{k}\cap \mathfrak{u},\mathfrak{t}^c) $, etc, denote the half-sum of the weights of $\mathfrak{t}^c$ in $\mathfrak{k}\cap\mathfrak{u}$, etc.
 Meanwhile
\[
\lambda ^L = \mu \vert _{\mathfrak{t}} =  \kappa\vert _{\mathfrak{t}} - \rho (  \mathfrak{s}\cap\mathfrak{u}, \mathfrak{t}^c)\vert _{\mathfrak{t}} - \rho (\Delta^+(\mathfrak{l}, \mathfrak{t}^c))\vert _{\mathfrak{t}} ,
\]
and therefore 
\[
\lambda^L= \kappa\vert _{\mathfrak{t}} - \rho (  \mathfrak{s}\cap\mathfrak{u}, \mathfrak{t}^c)\vert _{\mathfrak{t}},
\]
since $\rho (\Delta^+(\mathfrak{l}, \mathfrak{t}^c))\vert _{\mathfrak{t}}=0$. This gives
\[
\lambda^G \vert_{\mathfrak{t}}= 
 \kappa  \vert_{\mathfrak{t}}  - \rho ( \mathfrak{s}\cap \mathfrak{u}, \mathfrak{t}^c) \vert_{\mathfrak{t}} + \rho (\mathfrak{u},\mathfrak{t}^c) \vert_{\mathfrak{t}}
    = \lambda ^L +  \rho (\mathfrak{u},\mathfrak{t}),
\]
as required.
\end{remark} 

In a somewhat similar vein, we shall use the following fact in the  proof of Proposition~\ref{prop-theta-stable-data-from-dominant-weight}:

\begin{lemma}
\label{lem-restriction-to-t-from-tc}
Let $\mathfrak{q} = \mathfrak{l} + \mathfrak{u}$ be the $\theta$-stable parabolic subalgebra defined by a strictly dominant weight in $i\mathfrak{t}^{c*}_0$.  If $H=TA$ is  a maximally split Cartan subgroup of $L=N_G (\mathfrak{q})$, then 
$
 \rho (\mathfrak{u},\mathfrak{t}^c)\vert_{\mathfrak{t}_0^\perp} = 0
$.
\end{lemma}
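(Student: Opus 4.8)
The plan is to combine the explicit shape of $\mathfrak{t}_0^\perp$ coming from Proposition~\ref{prop-strictly-dominant-weight} with an elementary reflection symmetry of the weight system of $\mathfrak{u}$.

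First I would pin down $\mathfrak{t}_0^\perp$. Since $\mathfrak{q}$ is defined by a strictly dominant weight, $\mathfrak{l}\cap\mathfrak{k}=\mathfrak{t}^c$, so $\mathfrak{t}^c_0=\mathfrak{l}_0\cap\mathfrak{k}_0$. Taking $\theta$-fixed parts in the decomposition $\mathfrak{l}_0\cong\mathfrak{sl}(2,\R)\oplus\cdots\oplus\mathfrak{sl}(2,\R)\oplus\mathfrak{z}_0$ of Proposition~\ref{prop-strictly-dominant-weight}, one gets $\mathfrak{t}^c_0=\bigoplus_i\mathfrak{so}(2)_i\oplus(\mathfrak{z}_0)^\theta$, where $\mathfrak{so}(2)_i$ is the Cartan subalgebra of the $i$-th $\mathfrak{sl}(2,\R)$-summand $\mathfrak{l}_0^{(i)}$. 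Exactly as in the computation carried out just before the statement of the lemma (which uses only Proposition~\ref{prop-strictly-dominant-weight}), a maximally split Cartan subgroup $H=TA$ of $L$ has $\mathfrak{t}_0=(\mathfrak{z}_0)^\theta$; together with the orthogonality of the summands in Proposition~\ref{prop-strictly-dominant-weight}(iii) this gives $\mathfrak{t}_0^\perp=\bigoplus_i\mathfrak{so}(2)_i$. It therefore suffices to show that $\rho(\mathfrak{u},\mathfrak{t}^c)$ vanishes on the complexification $\mathfrak{h}^{(i)}$ of each $\mathfrak{so}(2)_i$, that is, on the Cartan subalgebra of the simple ideal $\mathfrak{l}^{(i)}\cong\mathfrak{sl}(2,\C)$ of $\mathfrak{l}$; if $\pm\alpha_i$ denote the nonzero weights of $\mathfrak{t}^c$ on $\mathfrak{l}^{(i)}$, then $\mathfrak{h}^{(i)}=\C\,\alpha_i^\vee$.

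The second step is the symmetry argument. Because $\alpha_i$ is a nonzero weight of $\mathfrak{t}^c$ occurring in $\mathfrak{l}$, the defining formula \eqref{eq-def-of-l} forces $\langle\lambda,\alpha_i\rangle=0$, and hence the reflection $s_{\alpha_i}$ fixes $\lambda$. Consequently $s_{\alpha_i}$ permutes $\{\beta:\langle\lambda,\beta\rangle>0\}=\Delta(\mathfrak{u},\mathfrak{t}^c)$. Since $\mathfrak{t}^c$ is a Cartan subalgebra of the reductive Lie algebra $\mathfrak{g}$, each root space is one-dimensional, so $\rho(\mathfrak{u},\mathfrak{t}^c)=\tfrac12\sum_{\beta\in\Delta(\mathfrak{u},\mathfrak{t}^c)}\beta$ is fixed by $s_{\alpha_i}$; this forces $\langle\rho(\mathfrak{u},\mathfrak{t}^c),\alpha_i\rangle=0$, i.e.\ $\rho(\mathfrak{u},\mathfrak{t}^c)$ vanishes on $\mathfrak{h}^{(i)}=\C\,\alpha_i^\vee$. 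Running this over all $i$ proves the lemma.

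I do not anticipate a genuine obstacle; once the identification of $\mathfrak{t}_0^\perp$ is in hand the argument is a few lines. The only point needing attention is that the computation giving $\mathfrak{t}_0=(\mathfrak{z}_0)^\theta$ is available for an \emph{arbitrary} strictly dominant defining weight, not merely for the particular weight $\lambda^G=\kappa+\rho(\Delta^+(\mathfrak{k},\mathfrak{t}^c))$ appearing in Proposition~\ref{prop-theta-stable-data-from-dominant-weight}; inspection shows it rests only on Proposition~\ref{prop-strictly-dominant-weight}. As an alternative to the reflection step one can instead use that $[\mathfrak{l},\mathfrak{u}]\subseteq\mathfrak{u}$, so $\mathfrak{u}$ is a finite-dimensional module over the semisimple ideal $\mathfrak{l}^{(i)}$; the sum of its $\mathfrak{h}^{(i)}$-weights is a Weyl-invariant covector for $\mathfrak{l}^{(i)}$ and hence zero.
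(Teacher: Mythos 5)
Your first step (identifying $\mathfrak{t}_0^\perp$ with the sum of the $\mathfrak{so}(2)_i$'s inside the $\mathfrak{sl}(2,\R)$-summands, valid for any strictly dominant defining weight) is exactly the paper's first step and is fine. The reflection argument you then run has a genuine gap. Your justification ``since $\mathfrak{t}^c$ is a Cartan subalgebra of the reductive Lie algebra $\mathfrak{g}$, each root space is one-dimensional'' is false unless $\operatorname{rank}K=\operatorname{rank}G$: in general $\mathfrak{t}^c$ is a Cartan subalgebra of $\mathfrak{k}$ only, its centralizer in $\mathfrak{g}$ is a fundamental Cartan $\mathfrak{t}^c\oplus\mathfrak{a}'$, and the $\mathfrak{t}^c$-weight spaces of $\mathfrak{g}$ can have dimension greater than one (e.g.\ $G=SL(2,\C)$ or $SL(3,\R)$). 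This matters because $\rho(\mathfrak{u},\mathfrak{t}^c)$ is by the paper's convention the half-sum of the $\mathfrak{t}^c$-weights of $\mathfrak{u}$ \emph{with multiplicities}; so what you actually need, and never establish, is that the multiset of $\mathfrak{t}^c$-weights of $\mathfrak{u}$, multiplicities included, is stable under $s_{\alpha_i}$. Noting that $s_{\alpha_i}$ fixes $\lambda$ and hence preserves the half-space $\{\beta:\langle\lambda,\beta\rangle>0\}$ does not by itself show that $s_{\alpha_i}$ sends weights occurring in $\mathfrak{g}$ to weights occurring in $\mathfrak{g}$, let alone with equal multiplicities, since in the unequal-rank case the $\mathfrak{t}^c$-weights of $\mathfrak{g}$ need not form a root system. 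As written, your main route is complete only when $\operatorname{rank}K=\operatorname{rank}G$.

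The gap is repairable: $s_{\alpha_i}$ is induced by $\operatorname{Ad}(g_i)$ for a Weyl element $g_i$ in the image of the $i$-th $SL(2,\R)$-factor, and $g_i\in L=N_G(\mathfrak{q})$ normalizes $\mathfrak{t}^c$ and preserves $\mathfrak{u}$, so the weight multiset of $\mathfrak{u}$ is indeed $s_{\alpha_i}$-invariant and your computation then goes through. But at that point you are already exploiting the action of the $\mathfrak{sl}(2,\R)$-summand on $\mathfrak{u}$, and your closing alternative --- $[\mathfrak{l},\mathfrak{u}]\subseteq\mathfrak{u}$ makes $\mathfrak{u}$ a finite-dimensional module over each $\mathfrak{sl}(2,\R)$-summand of $\mathfrak{l}_0$, and the sum of the weights of any finite-dimensional $\mathfrak{sl}(2)$-module is zero --- is the cleaner way to say it; that alternative is verbatim the paper's proof and needs no rank hypothesis or multiplicity discussion.
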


\begin{proof} 
The orthogonal complement of $\mathfrak{t}_0$ in $\mathfrak{t}_0^c$ is spanned by the images of the one-dimen\-sional Lie algebras $\mathfrak{so}(2)$ in the $\mathfrak{sl}(2,\R)$-summands of $\mathfrak{l}_0$, so it suffices to show that  $\rho (\mathfrak{u},\mathfrak{t}^c)$  restricts to zero on each $\mathfrak{so}(2)$. But the ideal $\mathfrak{u}$ is a representation, under the adjoint action, not just of $\mathfrak{so}(2)$ but of $\mathfrak{sl}(2,\R)$, and the sum of the weights of any finite-dimensional representation of $\mathfrak{sl}(2,\R)$ is zero.
\end{proof}

 \begin{proof}[Proof of Proposition~\ref{prop-theta-stable-data-from-dominant-weight}]
 The group $A \subseteq H$ is generated by images under the Lie group morphism  \eqref{eq-morphism-from-product-of-sl2-lie-groups} of $A$-subgroups in each of the $SL(2,\R)$-factors that map to $L$, along with the split part of the center of $L$.  Because of this, the group $T\subseteq H $  is generated by the torus $T^c \cap Z(L)^0$ and the images $m\in L$ of the matrices  
$ \left [ \begin{smallmatrix} -1 & 0 \\ 0 & -1 \end{smallmatrix}\right ]$ in each $SL(2,\R)$ factors in \eqref{eq-morphism-from-product-of-sl2-lie-groups}. The character $\delta $ is therefore fine, because the intersection of $T$ with the semisimple part of $L$ is the finite group generated by the elements $m$ alone.
 
Next, let us show that  if $\lambda^L\in i \mathfrak{t}^*_0$ is the differential of $\delta$, and if $\lambda ^G = \lambda^L + \rho (\mathfrak{u},\mathfrak{t})$ (this is the version of $\lambda ^G$ that is a weight on $\mathfrak{t}$, as in Definition~\ref{def-theta-stable-data}),
then  $\langle \lambda^G, \alpha \rangle  > 0$ for every   $\alpha\in i \mathfrak{t}^*$ that belongs to the set $\Delta (\mathfrak{u},\mathfrak{t})$ of weights for   the adjoint action of $\mathfrak{t}$ on $\mathfrak{u}$.
Given $\alpha\in \Delta(\mathfrak{u}, \mathfrak{t})$, there exists $\gamma\in \Delta (\mathfrak{u},\mathfrak{t}^c)$ with $\gamma\vert _{\mathfrak{t}} = \alpha$. It follows from the definition \eqref{eq-def-of-u} of $\mathfrak{u}$ and from the fact that $\lambda ^G \vert_{\mathfrak{t}^\perp} = 0$ that
\[
\langle \lambda^G, \alpha \rangle
=
\langle \lambda^G, \gamma \rangle > 0 ,
\]
as required.  

The above proves that $(\mathfrak{q},H,\delta)$ is a set of Vogan data.
It is clear that the difference of any two choices of $\Delta^+(\mathfrak{l},\mathfrak{t}^c)$ is analytically integral, so it remains to show that $\delta$ is independent of the choice of $\Delta^+(\mathfrak{l},\mathfrak{t}^c)$. 

 We shall show that $\delta(m) = -1$ for all of   the images in $L$ of the  elements $\left [\begin{smallmatrix}
-1 & 0 \\ 0 & -1
\end{smallmatrix}\right ]$ in the $ SL(2,\R)$-factors in \eqref{eq-morphism-from-product-of-sl2-lie-groups}, assuming of course that the weight 
\[
\mu = \kappa - \rho (  \mathfrak{s}\cap\mathfrak{u}, \mathfrak{t}^c) - \rho (\Delta^+(\mathfrak{l}, \mathfrak{t}^c))
\]
is an integral weight, so that $\delta$  is defined.  This will suffice, since $\delta$ is determined by its differential on the  torus $T^c \cap Z(L)^0$, and the differential is independent of the choice of $\Delta^+(\mathfrak{l},\mathfrak{t}^c)$.

Denote by $\pm \beta_j$ the two nonzero weights  for the action of $\mathfrak{t}^c_0$ on the $j$'th $\mathfrak{sl}(2)$-summand in $\mathfrak{l}$,  and arrange the signs so that
\[
\rho (\Delta^+ (\mathfrak{l}, \mathfrak{t}^c )) = \tfrac 12 \bigl ( \beta_1 + \cdots + \beta _N\bigr ).
\]
It follows from the definition \eqref{eq-def-of-l} of $\mathfrak{l}$ that 
\[
\langle \beta_j  , \kappa  \rangle  + \langle \beta_j ,  \rho (\mathfrak{k}\cap \mathfrak{u}, \mathfrak{t}^c)\rangle =
\langle \beta_j  , \kappa  \rangle  + \langle \beta_j ,  \rho (\Delta^+(\mathfrak{k}, \mathfrak{t}^c))\rangle 
  = \langle \beta_j , \lambda ^G \rangle  =  0
\]
(all of the inner products  above and below are taken in $i \mathfrak{t}^{c*}_0$) and so 
\[
\begin{aligned}
\langle \beta_j  ,  \kappa  \rangle -   \langle \beta_j ,  \rho (\mathfrak{s} \cap \mathfrak{u}, \mathfrak{t}^c)\rangle 
	 & = 	-  \langle \beta_j,  \rho ( \mathfrak{k}\cap \mathfrak{u}, \mathfrak{t}^c) \rangle
			- \langle \beta_j  ,  \rho (\mathfrak{s} \cap\mathfrak{u}, \mathfrak{t}^c)\rangle \\
	& = 
	- \langle \beta_j  ,  \rho (\mathfrak{u}, \mathfrak{t}^c)\rangle.
\end{aligned}
\]
But this is zero, since by Lemma~\ref{lem-restriction-to-t-from-tc} $\rho (\mathfrak{u},\mathfrak{t}^c)$ is supported on $\mathfrak{t}_0$, while $\beta_j$ is supported on a single $\mathfrak{so}(2)$-summand in $\mathfrak{t}_0^\perp$.
As a result,
\[
 \langle \beta_j  ,  \mu \rangle
	 = 
			- \langle \beta_j  , \rho (\Delta^{+} (\mathfrak{l}, \mathfrak{t}^c))\rangle 
	 =  - \tfrac 12 \langle \beta_j,\beta_j\rangle .
\]
It follows that the restriction of $\mu$ to the $j$'th  $\mathfrak{so}(2)$-summand is equal to $-\frac 12 \beta_j$. 
Now if $\beta$ is either of the two nonzero weights for the action of $\mathfrak{so}(2)$ on $\mathfrak{sl}(2,\R)$, then 
\[
\beta \Bigl (  \left [ \begin{smallmatrix}
0 & 1\\
-1 & 0 
\end{smallmatrix} \right ]   \Bigr ) = \pm 2 i,
\]
and so if $\phi\colon \mathfrak{sl}(2,\R)\to \mathfrak{l}_0$ is the inclusion of the $j$'th $\mathfrak{sl}(2,\R)$-summand, then 
\[
\mu \Bigl ( \phi \Bigl ( \left [ \begin{smallmatrix}
0 & 1\\
-1 & 0 
\end{smallmatrix} \right ]  \Bigr ) \Bigr ) = \pm  i.
\]
Using the fact that $\delta$ is the restriction to $T$ of the character $\exp (\mu)$ of $T^c$, we therefore find that 
  if $\Phi$ is the  Lie group morphism corresponding to $\phi$, then
\begin{equation*}
\begin{aligned}
 \delta (m ) 
    & =\exp(\mu)\Bigl (     \Phi \Bigl (  \left [ \begin{smallmatrix}
-1 & 0\\
0 & -1 
\end{smallmatrix} \right ]   \Bigr ) \Bigr) 
\\ 
    & =\exp(\mu)\Bigl ( \Phi \Bigl (\exp \Bigl ( \pi\cdot  \left [ \begin{smallmatrix}
0 & 1\\
-1 & 0 
\end{smallmatrix} \right ]   \Bigr )\Bigr ) \Bigr ) 
\\
    &   = \exp(\mu)\Bigl ( \exp  \Bigl ( \pi \cdot \phi \Bigl (  \left [ \begin{smallmatrix}
0 & 1\\
-1 & 0 
\end{smallmatrix} \right ]   \Bigr )\Bigr ) \Bigr ) 
\\
    & = \exp \Bigl (\pi \cdot \mu\Bigl (  \phi \Bigl  (\left [ \begin{smallmatrix}
0 & 1\\
-1 & 0 
\end{smallmatrix} \right ]  \Bigr )\Bigr ) \Bigr )
    = \exp (\pm \pi i ) =  -1 ,
\end{aligned}
\end{equation*}
 as required. 
 \end{proof}
 
To summarize, Proposition~\ref{prop-theta-stable-data-from-dominant-weight} associates to each dominant weight $\mu\in i \mathfrak{t}^{c*}_0$ a unique set of Vogan data, as long as $\mu$ satisfies a certain integrality condition.  We are now going to characterize the sets of Vogan data that are obtained in this way.

 \begin{definition}
 \label{def-essential-vogan-data}
A set $(\mathfrak{q},H,\delta)$ of Vogan data is  \emph{essential} if 
\begin{enumerate}[\rm (i)]

\item  There is an isomorphism 
\begin{equation*} 
\mathfrak{l}_0 \cong \mathfrak{sl}(2,\R) \oplus \cdots \oplus \mathfrak{sl}(2,\R) \oplus \mathfrak{z}_0
\end{equation*}
with the properties listed in Proposition~\ref{prop-strictly-dominant-weight}. 
\item 
The character  $\delta$ takes the value $-1$ on the images of each of the elements $\left [\begin{smallmatrix}
-1 & 0 \\ 0 & -1
\end{smallmatrix}\right ]\in SL(2,\R)$ under the associated  morphism of Lie groups
\[
SL(2,\R) \times \cdots \times SL(2,\R) \times Z(L)^0 
\longrightarrow L 
\]
in \eqref{eq-morphism-from-product-of-sl2-lie-groups}.
\end{enumerate}
 \end{definition} 
 
 \begin{remark}
The set of images in (ii) does not depend on the choice of isomorphism in (i).
 \end{remark}
 
 The proof that we have just completed shows that:

 \begin{proposition}
 \label{prop-essential-from-dominant-weight}
  Each of the sets of Vogan data provided by Proposition~\textup{\ref{prop-theta-stable-data-from-dominant-weight}} is essential. \qed
 \end{proposition}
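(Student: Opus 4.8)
The plan is simply to verify the two defining conditions of Definition~\ref{def-essential-vogan-data} against the data $(\mathfrak{q},H,\delta)$ produced by Proposition~\ref{prop-theta-stable-data-from-dominant-weight}, and to observe that both have in fact already been checked in the course of the preceding argument.

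For condition~(i), recall that in Proposition~\ref{prop-theta-stable-data-from-dominant-weight} the parabolic $\mathfrak{q} = \mathfrak{l}+\mathfrak{u}$ is defined by the weight $\lambda^G = \kappa + \rho(\Delta^+(\mathfrak{k},\mathfrak{t}^c))$, which is \emph{strictly} dominant for $\Delta^+(\mathfrak{k},\mathfrak{t}^c)$ because $\kappa$ is dominant and $\rho(\Delta^+(\mathfrak{k},\mathfrak{t}^c))$ is strictly dominant. So Proposition~\ref{prop-strictly-dominant-weight} applies directly and supplies an isomorphism $\mathfrak{l}_0 \cong \mathfrak{sl}(2,\R) \oplus \cdots \oplus \mathfrak{sl}(2,\R) \oplus \mathfrak{z}_0$ with the three listed properties. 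Thus condition~(i) holds with no further work.

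For condition~(ii), I would point back to the final computation in the proof of Proposition~\ref{prop-theta-stable-data-from-dominant-weight}. There it was shown that, writing $m\in L$ for the image under \eqref{eq-morphism-from-product-of-sl2-lie-groups} of $\left [ \begin{smallmatrix} -1 & 0 \\ 0 & -1\end{smallmatrix}\right ]$ in one of the $SL(2,\R)$-factors, one has $\delta(m) = \exp(\pm\pi i) = -1$; the computation used only that the restriction of $\mu$ to the relevant $\mathfrak{so}(2)$-summand equals $-\tfrac12\beta_j$, which is exactly what the definition \eqref{eq-def-of-l} of $\mathfrak{l}$ together with Lemma~\ref{lem-restriction-to-t-from-tc} gives. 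That is precisely the content of condition~(ii).

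Hence there is essentially nothing new to prove: both clauses of Definition~\ref{def-essential-vogan-data} were established while verifying that $(\mathfrak{q},H,\delta)$ is a set of Vogan data. The only point requiring a moment's thought — and it is the nearest thing to an obstacle — is that the isomorphism invoked in condition~(i) may be taken to be the same one used in the $\delta(m)=-1$ computation; but the remark following Proposition~\ref{prop-strictly-dominant-weight} shows that this isomorphism is canonical up to permutations of the $\mathfrak{sl}(2,\R)$-summands and automorphisms preserving the Cartan involution, and such modifications do not change the set of elements $m$, as recorded in the remark after Definition~\ref{def-essential-vogan-data}. So no genuine difficulty arises, and the proof is a one-line appeal to the argument just given.
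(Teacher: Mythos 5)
Your proposal is correct and matches the paper's own argument: the paper disposes of this proposition with the remark ``the proof that we have just completed shows that,'' i.e.\ condition (i) of Definition~\ref{def-essential-vogan-data} follows because $\mathfrak{q}$ is defined by the strictly dominant weight $\kappa+\rho(\Delta^+(\mathfrak{k},\mathfrak{t}^c))$ so Proposition~\ref{prop-strictly-dominant-weight} applies, and condition (ii) is exactly the $\delta(m)=-1$ computation carried out at the end of the proof of Proposition~\ref{prop-theta-stable-data-from-dominant-weight}. Your extra observation that the set of elements $m$ is independent of the chosen isomorphism is the same point recorded in the remark following Definition~\ref{def-essential-vogan-data}, so nothing further is needed.
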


\begin{theorem} 
\label{thm-classification-of-essential-discrete-data}
The construction in Proposition~\textup{\ref{prop-theta-stable-data-from-dominant-weight}} 
determines a bijection from the  dominant weights $\kappa \in i \mathfrak{t}^{c*}_0$  for which the weight 
\[
\mu = \kappa - \rho ( \mathfrak{s} \cap \mathfrak{u},\mathfrak{t}^c) - \rho(\Delta^+(\mathfrak{l},\mathfrak{t}^c))
\]
 is analytically integral to  the  $K$-conjugacy classes of sets of essential Vogan data.
\end{theorem}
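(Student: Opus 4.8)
The plan is to produce an explicit inverse. First note that by Propositions~\ref{prop-theta-stable-data-from-dominant-weight} and~\ref{prop-essential-from-dominant-weight} the construction already gives a well-defined map to $K$-conjugacy classes of essential Vogan data (the value of $\delta$ does not depend on the choice of $\Delta^+(\mathfrak{l},\mathfrak{t}^c)$, and changing the maximally split Cartan of $L$ replaces the data by a $K\cap L$-conjugate, hence a $K$-conjugate, one), so it remains to prove bijectivity. Write $\mathfrak{b}\subseteq\mathfrak{k}$ for the Borel subalgebra $\mathfrak{t}^c\oplus\bigoplus_{\alpha\in\Delta^+(\mathfrak{k},\mathfrak{t}^c)}\mathfrak{k}_\alpha$. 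For any $\kappa$ as in the statement the weight $\lambda^G=\kappa+\rho(\Delta^+(\mathfrak{k},\mathfrak{t}^c))$ is \emph{strictly} dominant, so the parabolic $\mathfrak{q}$ it defines satisfies $\mathfrak{q}\cap\mathfrak{k}=\mathfrak{b}$ and (as in the proof of Proposition~\ref{prop-strictly-dominant-weight}) $\mathfrak{l}\cap\mathfrak{k}=\mathfrak{t}^c$. Feeding the identities recorded in the remarks following Proposition~\ref{prop-theta-stable-data-from-dominant-weight} --- that $\lambda^G$, as a weight on $\mathfrak{t}^c$, is the extension by zero of $\lambda^L+\rho(\mathfrak{u},\mathfrak{t})$, and that $\rho(\mathfrak{u},\mathfrak{t}^c)$ is the extension by zero of $\rho(\mathfrak{u},\mathfrak{t})$ (Lemma~\ref{lem-restriction-to-t-from-tc}) --- into $\lambda^G=\kappa+\rho(\mathfrak{k}\cap\mathfrak{u},\mathfrak{t}^c)$ gives the closed formula
\[ \kappa \;=\; \widetilde{\lambda^L}+\rho(\mathfrak{s}\cap\mathfrak{u},\mathfrak{t}^c), \]
where $\widetilde{\lambda^L}\in i\mathfrak{t}^{c*}_0$ is the extension by zero to $\mathfrak{t}^c$ of the differential $\lambda^L$ of $\delta$. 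Thus $\kappa$ is recovered from $(\mathfrak{q},\delta)$ alone; this formula is the inverse map, applied to representatives normalised by the condition $\mathfrak{q}\cap\mathfrak{k}=\mathfrak{b}$.

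For injectivity, suppose dominant weights $\kappa_1,\kappa_2$ with the integrality property yield $K$-conjugate Vogan data, say $\Ad(k)(\mathfrak{q}_1,H_1,\delta_1)=(\mathfrak{q}_2,H_2,\delta_2)$. Then $\Ad(k)\mathfrak{b}=\Ad(k)(\mathfrak{q}_1\cap\mathfrak{k})=\mathfrak{q}_2\cap\mathfrak{k}=\mathfrak{b}$, and since $\Ad(k)$ commutes with complex conjugation it carries $\mathfrak{l}_1=\mathfrak{q}_1\cap\overline{\mathfrak{q}_1}$ to $\mathfrak{l}_2$, whence $\Ad(k)\mathfrak{t}^c=\Ad(k)(\mathfrak{l}_1\cap\mathfrak{k})=\mathfrak{l}_2\cap\mathfrak{k}=\mathfrak{t}^c$. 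So $\Ad(k)$ normalises $\mathfrak{t}^c$ and fixes $\Delta^+(\mathfrak{k},\mathfrak{t}^c)$, hence acts as the identity on $\mathfrak{t}^c$; therefore $\mathfrak{q}_1=\mathfrak{q}_2$. The groups $T_i=H_i\cap K$ both equal the group generated by $T^c\cap Z(L)^0$ and the elements $m\in L$ of Definition~\ref{def-essential-vogan-data}(ii) (as in the proof of Proposition~\ref{prop-theta-stable-data-from-dominant-weight}), a description independent of the choice of maximally split Cartan, so $T_1=T_2\subseteq T^c$, $\Ad(k)$ is trivial on it, and $\delta_1=\delta_2$. The displayed formula then gives $\kappa_1=\kappa_2$.

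For surjectivity, start with an essential set of Vogan data. Property~(i) of Definition~\ref{def-essential-vogan-data} gives $\mathfrak{l}\cap\mathfrak{k}=\mathfrak{t}^c$, so $\mathfrak{q}\cap\mathfrak{k}$ is a Borel subalgebra of $\mathfrak{k}$ with Cartan $\mathfrak{t}^c$; after replacing the data by a $K$-conjugate we may assume $\mathfrak{q}\cap\mathfrak{k}=\mathfrak{b}$, so $\mathfrak{q}$ is defined by a strictly dominant weight and Propositions~\ref{prop-strictly-dominant-weight} and~\ref{prop-theta-stable-data-from-dominant-weight} apply. Set $\kappa:=\widetilde{\lambda^L}+\rho(\mathfrak{s}\cap\mathfrak{u},\mathfrak{t}^c)$. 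I would then verify four points. (a) The weight $\lambda^G:=\kappa+\rho(\Delta^+(\mathfrak{k},\mathfrak{t}^c))=\widetilde{\lambda^L}+\rho(\mathfrak{u},\mathfrak{t}^c)$ defines $\mathfrak{q}$: it is supported on $\mathfrak{t}_0$, hence orthogonal to the weights $\pm\beta_j$ of $\mathfrak{t}^c$ on $\mathfrak{l}$, while for $\gamma\in\Delta(\mathfrak{u},\mathfrak{t}^c)$ one has $\gamma|_{\mathfrak{t}}\neq 0$ (the defining weight of $\mathfrak{q}$ is supported on $\mathfrak{t}_0$) and $\langle\lambda^G,\gamma\rangle=\langle\lambda^L+\rho(\mathfrak{u},\mathfrak{t}),\gamma|_{\mathfrak{t}}\rangle>0$ by Definition~\ref{def-theta-stable-data}(v). (b) The associated weight $\mu=\widetilde{\lambda^L}-\rho(\Delta^+(\mathfrak{l},\mathfrak{t}^c))$ is analytically integral on $T^c$: it restricts to $-\tfrac12\beta_j$ on each $\mathfrak{so}(2)$-summand of $\mathfrak{t}_0^\perp$ (integral there) and to $\lambda^L$ on $T^c\cap Z(L)^0$, and the pulled-back character $\exp(\mu)$ on the product in \eqref{eq-morphism-from-product-of-so(2)-lie-groups} descends to $T^c$ because on a kernel element supported on $S$ it equals $(-1)^{|S|}$, which is $1$ there --- the existence of $\delta$ with $\delta(m)=-1$ (essentiality~(ii)) forces $|S|$ even whenever the corresponding product of the $m$'s is trivial. (c) $\kappa$ is dominant --- discussed below. (d) Running the construction of Proposition~\ref{prop-theta-stable-data-from-dominant-weight} on this $\kappa$ returns $(\mathfrak{q},H,\delta)$: it returns $\mathfrak{q}$ by~(a), and the character it produces, $\exp(\mu)|_T$, has differential $\lambda^L$ and takes the value $-1$ at each $m$ (by the computation at the end of the proof of Proposition~\ref{prop-theta-stable-data-from-dominant-weight}), hence agrees with $\delta$ on $T^c\cap Z(L)^0$ and on each $m$, hence equals $\delta$.

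The hard part is (c): showing that $\kappa=\widetilde{\lambda^L}+\rho(\mathfrak{s}\cap\mathfrak{u},\mathfrak{t}^c)$ is dominant for $\Delta^+(\mathfrak{k},\mathfrak{t}^c)$. For a simple $\alpha$ one has $\mathfrak{k}_\alpha\subseteq\mathfrak{u}$, hence $\langle\lambda^G,\alpha\rangle>0$ by~(a), and the task is to improve this to $\langle\lambda^G,\alpha^\vee\rangle\ge 1$, equivalently $\langle\kappa,\alpha\rangle\ge 0$. I would obtain this by analysing $\rho(\mathfrak{s}\cap\mathfrak{u},\mathfrak{t}^c)$ along the $\alpha$-strings in the $\mathfrak{k}$-module $\mathfrak{s}$, using that $\mathfrak{s}\cap\mathfrak{u}$ is stable under $\mathfrak{b}$ and that the defining weight of $\mathfrak{q}$ is supported on $\mathfrak{t}_0$, together with the positivity in Definition~\ref{def-theta-stable-data}(v); this is exactly the point where the hypotheses on $\delta$ are used in full strength, and it coincides with the classical fact from the theory of cohomological induction that the lowest $K$-type of the tempered module attached to a set of Vogan data has dominant highest weight. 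Everything else is bookkeeping with the identities already assembled in Sections~\ref{sec-theta-stable-parabolics} and~\ref{sec-discrete-theta-stable-data}.
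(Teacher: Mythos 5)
Your overall strategy is the same as the paper's: normalise so that $\mathfrak{q}\cap\mathfrak{k}$ is the standard Borel, recover $\kappa$ as $\widetilde{\lambda^L}+\rho(\mathfrak{s}\cap\mathfrak{u},\mathfrak{t}^c)$ (equivalently, via the fine weight $\mu^L$ of Lemma~\ref{lem-fine-k-type-for-special-l}, which is how the paper phrases it), check that the construction of Proposition~\ref{prop-theta-stable-data-from-dominant-weight} returns the given data, and observe that only the trivial Weyl element preserves $\Delta^+(\mathfrak{k},\mathfrak{t}^c)$, which gives injectivity (your injectivity paragraph is a correctly fleshed-out version of the paper's one-line remark). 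The genuine gap is exactly the point you yourself flag as ``the hard part'': step (c), the dominance of $\kappa$, is not proved. Strict dominance of $\lambda^G=\kappa+\rho(\Delta^+(\mathfrak{k},\mathfrak{t}^c))$ only gives $\langle\kappa,\alpha^\vee\rangle>-1$ for simple compact $\alpha$, and nothing in your sketch excludes a value in the open interval $(-1,0)$. The root-string plan is never executed, and the ``classical fact'' you fall back on does not apply as stated: Vogan's lowest $K$-type theorem concerns the weights $\mu^L+2\rho(\mathfrak{u}\cap\mathfrak{s},\mathfrak{t}^c)$, which differ from $\kappa=\mu^L+\rho(\Delta^+(\mathfrak{l},\mathfrak{t}^c))+\rho(\mathfrak{u}\cap\mathfrak{s},\mathfrak{t}^c)$ by $\rho(\mathfrak{u}\cap\mathfrak{s},\mathfrak{t}^c)-\rho(\Delta^+(\mathfrak{l},\mathfrak{t}^c))$, so dominance of the former does not transfer to $\kappa$.

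What closes the gap — and this is how the paper does it — is an ingredient you have already established in your step (b): the analytic integrality of $\mu$. By the discussion surrounding Proposition~\ref{prop-genuine-reps-of-k-tilde-by-highest-weight}, using $\rho(\Delta^+(\mathfrak{s},\mathfrak{t}^c))=\rho(\mathfrak{s}\cap\mathfrak{u},\mathfrak{t}^c)+\rho(\Delta^+(\mathfrak{l},\mathfrak{t}^c))$ from Remark~\ref{rem-weights-from-u-and-l-give-positive-system-for-g}, integrality of $\mu$ for $T^c$ says precisely that $\kappa$ is an analytically integral weight of the double cover $\widetilde K$; hence $\langle\kappa,\alpha^\vee\rangle\in\Z$ for every root of $\mathfrak{k}$, and combined with $\langle\kappa,\alpha^\vee\rangle>-1$ this forces $\langle\kappa,\alpha^\vee\rangle\geq 0$. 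No analysis of $\alpha$-strings in $\mathfrak{s}$ and no appeal to minimal $K$-type theory is needed. Two further small points: in (b), your descent check only treats kernel elements of the map \eqref{eq-morphism-from-product-of-so(2)-lie-groups} with trivial central component; a general kernel element has central part $z=\prod_{j\in S}m_j$, and the character value is $(-1)^{|S|}\delta(z)$, which equals $1$ precisely because $\delta(m_j)=-1$ — this is the content of Lemma~\ref{lem-fine-k-type-for-special-l}, and is the same integrality statement you should then feed into (c) as above.
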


Once again, we need a preliminary computation:

\begin{lemma}
\label{lem-fine-k-type-for-special-l}
Let $(\mathfrak{q},H ,\delta)$ be a set of essential Vogan data, and write $H=TA$, as usual, so that $\delta$ is a character of $T$.  For each positive system $\Delta^{+}(\mathfrak{l}, \mathfrak{t}^c)$
there is a unique analytically integral weight $\mu^L\in i \mathfrak{t}^{c*}_0$  for which 
\begin{enumerate}[\rm (i)]
    \item the corresponding global character $\exp(\mu^L)$ of $T^c$ restricts to $\delta$ on $T$, and 
    \item if $\beta\in \Delta^{+}(\mathfrak{l}, \mathfrak{t}^c)$ is the positive root associated to a given $\mathfrak{sl}(2,\R)$ summand in Definition~\textup{\ref{def-essential-vogan-data}}, then the restriction of $\mu^L$ to the corresponding $\mathfrak{so}(2)$-summand of $\mathfrak{t}^c$ is  
    \[
    \mu^L\vert_{\mathfrak{so}(2)} = - \tfrac 12 \beta\vert_{\mathfrak{so}(2)}.
    \]
\end{enumerate}
\end{lemma}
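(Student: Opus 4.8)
The plan is to construct $\mu^L$ directly from $\delta$ and the positive roots of $\mathfrak{l}$, and then to verify analytic integrality, property~(i) and uniqueness in turn. Fix the positive system $\Delta^{+}(\mathfrak{l},\mathfrak{t}^c)$, and for $j=1,\dots,N$ let $\beta_j\in\Delta^{+}(\mathfrak{l},\mathfrak{t}^c)$ be the root attached to the $j$-th $\mathfrak{sl}(2,\R)$-summand of $\mathfrak{l}_0$; by construction $\beta_j$ is supported on the corresponding $\mathfrak{so}(2)$-summand $\mathfrak{so}(2)_j$ of $\mathfrak{t}^c_0$. By Proposition~\ref{prop-strictly-dominant-weight} and \eqref{eq-dir-sum-for-t-perp} there is an orthogonal direct sum $\mathfrak{t}^c_0=\mathfrak{t}_0\oplus\mathfrak{so}(2)_1\oplus\cdots\oplus\mathfrak{so}(2)_N$, where $\mathfrak{t}_0=\operatorname{Lie}(T)$ is the Lie algebra of the connected torus $S:=T^c\cap Z(L)^0$. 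Recall from the proof of Proposition~\ref{prop-theta-stable-data-from-dominant-weight} that $T$ is generated by $S$ together with the images $m_j\in L$ of the matrices $\left[\begin{smallmatrix}-1&0\\0&-1\end{smallmatrix}\right]$ under the morphism \eqref{eq-morphism-from-product-of-sl2-lie-groups}, and that $m_j=\exp(\pi X_j)$, where $X_j\in\mathfrak{so}(2)_j$ is the image of $\left[\begin{smallmatrix}0&1\\-1&0\end{smallmatrix}\right]$, so that $\beta_j(X_j)=\pm 2i$.

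Now I would define $\mu^L\in i\mathfrak{t}^{c*}_0$ by setting $\mu^L\vert_{\mathfrak{so}(2)_j}=-\tfrac12\beta_j\vert_{\mathfrak{so}(2)_j}$ for each $j$ and $\mu^L\vert_{\mathfrak{t}_0}=\nu$, where $\nu$ is the unique analytically integral weight of $S$ with $\exp(\nu)=\delta\vert_S$ (it exists and is unique because $S$ is a connected torus, so that passing to characters is a bijection on weights). Property~(ii) then holds by construction, and it remains to check that $\mu^L$ is analytically integral and that $\exp(\mu^L)\vert_T=\delta$. Since $\tfrac12\beta_j$ is analytically integral for $SO(2)\subseteq SL(2,\R)$ — a standard $\mathfrak{sl}(2)$ computation using $\beta_j(X_j)=\pm 2i$ — and $\nu$ is integral for $S$, the weight $\mu^L$ at least defines a character of the product $SO(2)\times\cdots\times SO(2)\times S$; the point is to show that this character is trivial on the kernel of the surjection \eqref{eq-morphism-from-product-of-so(2)-lie-groups} onto $T^c$, which will give integrality. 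By the description of that kernel in Section~\ref{sec-theta-stable-parabolics}, a kernel element has the form $(\varepsilon_1,\dots,\varepsilon_N,z)$ with $\varepsilon_j=\left[\begin{smallmatrix}-1&0\\0&-1\end{smallmatrix}\right]^{n_j}$ and $z\in S$, subject to the relation $m_1^{n_1}\cdots m_N^{n_N}\,z=1$ in $L$. Evaluating $\exp(\mu^L)$ on such an element gives $\prod_j\exp\!\bigl(-\tfrac12 n_j\,\beta_j(\pi X_j)\bigr)\cdot\delta(z)=\prod_j(-1)^{n_j}\cdot\delta(z)$, whereas the relation above, together with the essentiality hypothesis $\delta(m_j)=-1$, forces $\delta(z)=\prod_j\delta(m_j)^{-n_j}=\prod_j(-1)^{n_j}$; the two products cancel, so $\exp(\mu^L)$ is trivial on the kernel and $\mu^L$ is analytically integral.

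Property~(i) is then immediate: $\exp(\mu^L)$ and $\delta$ agree on the generating set $S\cup\{m_1,\dots,m_N\}$ of $T$, namely on $S$ by the definition of $\nu$, and $\exp(\mu^L)(m_j)=\exp\!\bigl(-\tfrac12\beta_j(\pi X_j)\bigr)=\exp(\pm\pi i)=-1=\delta(m_j)$ by essentiality. For uniqueness, suppose $\mu^L$ and $(\mu^L)'$ both satisfy (i) and (ii); their difference vanishes on each $\mathfrak{so}(2)_j$ by (ii), and on $S$ it is an analytically integral weight whose character $\exp(\mu^L-(\mu^L)')\vert_S=\delta\cdot\delta^{-1}$ is trivial, hence — by injectivity of the weight-to-character map for the connected torus $S$ — it vanishes on $\mathfrak{t}_0$ as well, so $\mu^L=(\mu^L)'$ on all of $\mathfrak{t}^c_0$.

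The only step that I expect to require genuine care is the verification of analytic integrality, i.e.\ that the prescribed restrictions of $\mu^L$ glue to an honest global character of $T^c$ rather than merely of the covering group $SO(2)\times\cdots\times SO(2)\times S$. This is exactly the place where the essentiality hypothesis $\delta(m_j)=-1$ is indispensable, and the computation involved is the one already performed, in the opposite direction, in the proof of Proposition~\ref{prop-theta-stable-data-from-dominant-weight}; everything else is bookkeeping with the orthogonal decomposition of $\mathfrak{t}^c_0$ from Section~\ref{sec-theta-stable-parabolics}.
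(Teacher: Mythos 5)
Your proposal is correct and follows essentially the same route as the paper: build the character on the covering group $SO(2)\times\cdots\times SO(2)\times (T^c\cap Z(L)^0)$ from generating characters on the $SO(2)$-factors and $\delta$ on the central torus, then use $\delta(m_j)=-1$ to see that it descends to $T^c$. The paper leaves the descent through the kernel of \eqref{eq-morphism-from-product-of-so(2)-lie-groups} as ``straightforward''; your kernel computation is exactly that omitted verification.
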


\begin{remark}
    \label{rem-fine-weigths-of-tc-1}
In the essential case that we are considering in the lemma,  conditions (i) and (ii) above characterize what Vogan calls the \emph{fine} representations $\exp(\mu^L)$ of $T^c$ with respect to $L$ that restrict to the character $\delta$ on $T$ \cite[Def.~4.3.9]{Voganbook}.  The set of all such is denoted $A(\mathfrak{q},\delta)$ by Vogan \cite[Def.~4.3.15]{Voganbook}, and the lemma shows that it has $2^N$ elements, where $N$ is the number of $\mathfrak{sl}(2,\R)$ factors in $\mathfrak{l}_0$.
\end{remark} 

\begin{remark}
    \label{rem-fine-weigths-of-tc-2}
Later on it will  be more appropriate to change a sign so that 
\[
    \mu\vert_{\mathfrak{so}(2)} = \tfrac 12 \beta\vert_{\mathfrak{so}(2)} ,
\]
which of course we can do by working with the opposite system of positive roots for $(\mathfrak{l},\mathfrak{t}^c)$.  In this case 
\[
\mu^L = \kappa^L + \rho (\Delta ^+(\mathfrak{l},\mathfrak{t}^c)),
\]
where $\kappa^L\in i \mathfrak{t}_{0}^{c,*}$ is the differential of $\delta$, extended by zero from $\mathfrak{t}_0$ to $\mathfrak{t}^c$.
\end{remark}

\begin{proof}[Proof of the Lemma]
The lemma  is a special case of one of the most important technical results in Vogan's monograph, \cite[Thm.~4.3.16]{Voganbook}.  
But the special case is easy to handle directly, as follows.

We shall use the  surjective morphism of Lie groups \eqref{eq-morphism-from-product-of-sl2-lie-groups}. As indicated in \eqref{eq-morphism-from-product-of-so(2)-lie-groups}, the torus $T^c$ is generated by the image of $
SO(2)\times\cdots \times SO(2)$
and the compact part of the center of $L$. Because of this we need only  define a unitary character on $SO(2)\times\cdots \times SO(2)$, namely a product of generating characters on each factor, and a unitary character on the compact part of the center, namely the restriction of $\delta$, and then check that the product factors through 
\[
\Bigl ( SO(2)\times \cdots \times SO(2)\Bigr ) \times \Bigl ( T^c \cap Z(L)\Bigr ) \longrightarrow T^c 
\]
using the   property of $\delta$ in the definition of essential Vogan data.  This is straightforward.
\end{proof}

\begin{proof}[Proof of Theorem~\ref{thm-classification-of-essential-discrete-data}]
Let $(\mathfrak{q},H,\delta)$ be a set  of essential Vogan data. After conjugating by an element of $K$ we may, and shall, assume that $\mathfrak{u}\cap \mathfrak{k}$ is the direct sum of the already-fixed positive weight spaces in $\mathfrak{k}$.  

Let $\lambda^L$ be the differential of $\delta$ and  define the weight   $\lambda^G \in i\mathfrak{t}^*_0$ by
\[
\lambda^G = \lambda^L + \rho (\mathfrak{u},\mathfrak{t} ) .
\]
If we extend $\lambda^G$ to a weight in $i \mathfrak{t}^{c*}_0$ by defining it to be zero on the orthogonal complement of $\mathfrak{t}_0\subseteq \mathfrak{t}^c$, then it follows from part (v) of Definition~\ref{def-theta-stable-data} that the extension satisfies
\[
\langle \lambda ^G , \alpha \rangle  > 0 \quad \forall \alpha \in \Delta (\mathfrak {u}, \mathfrak{t^c}) ;
\]
compare the proof of Proposition~\ref{prop-theta-stable-data-from-dominant-weight}. In addition, 
\[
\langle \lambda ^G , \beta \rangle  = 0 \qquad  \forall\, \beta \in \Delta (\mathfrak{l}, \mathfrak{t}^c) , 
\]
since $\lambda^G$ vanishes on $\mathfrak{t}_0^\perp$ while $\beta$ vanishes on $\mathfrak{t}_0$.  So   $\lambda ^G $  defines $\mathfrak{q}$.

Because the maximal compact subgroup of $L$ is the torus $T^c$, the maximally split Cartan subgroup $H =TA$ of $L$ is unique up to conjugacy by an element of $T^c$, and $T$ is uniquely determined, and a subgroup of $T^c$.

Choose an integral weight $\mu^L$ of $\mathfrak{t}^c$  as in Lemma~\ref{lem-fine-k-type-for-special-l}, so that 
\[
\mu^L + \rho (\Delta^+(\mathfrak{l},\mathfrak{t}^c)) = \lambda ^L ;
\]
here we extend $\lambda ^L$ by zero on $\mathfrak{t}_0^\perp$. The unitary character of $T^c$ associated to $\mu^L$ restricts to $\delta$ on $T\subseteq T^c$.  Indeed the differential of this character is equal to $\lambda ^L$ on $\mathfrak{t}$, so the character agrees with $\delta$ on the connected component of the identity in $T$. But in addition both the character and $\delta$ are equal to $-1$ on the elements $m$ that generate the component group of $T$.

Next, define a weight  $\kappa\in i\mathfrak{t}^{c*}_0$   by 
\[
\mu = \kappa - \rho ( \mathfrak{s} \cap \mathfrak{u},\mathfrak{t}^c) - \rho(\Delta^+(\mathfrak{l},\mathfrak{t}^c))
\]
and observe that, thanks to Lemma~\ref{lem-restriction-to-t-from-tc},
\[
\kappa + \rho (\Delta^+(\mathfrak{k},\mathfrak{t}^c)) 
    =
\mu^L+ \rho (\Delta ^+(\mathfrak{l},\mathfrak{t}^c))
 + \rho (\mathfrak{u},\mathfrak{t}^c) 
    = \lambda ^L + \rho (\mathfrak{u}, \mathfrak{t}^c) 
      = \lambda ^G,
\]
and so $\kappa +\rho (\Delta^+(\mathfrak{k},\mathfrak{t}^c)) $ defines $\mathfrak{q}$.   

It follows from the above that $\kappa + \rho (\Delta^+(\mathfrak{k},\mathfrak{t}^c))$ is strictly dominant.  If $\kappa$ was integral, then it   would follow from this that $\kappa$ is dominant.  But $\kappa$ \emph{is} integral for a certain double cover $\widetilde K$ of $K$, namely the one defined in Section~\ref{sec-dirac-cohomology} below; see the discussion following Proposition~\ref{prop-genuine-reps-of-k-tilde-by-highest-weight}. So $\kappa$ is dominant whether or not it is integral.  

We have now shown that $(\mathfrak{q},H,\delta)$ is one of the sets of essential Vogan data  defined in Proposition~\ref{prop-essential-from-dominant-weight}, up to $K$-conjugacy.  Since there are no inner automorphisms of $K$ that globally preserve  both  $T^c$ and the system of positive roots $\Delta^+(\mathfrak{k},\mathfrak{t}^c)$, the map from weights to $K$-conjugacy classes of essential Vogan data is not only surjective, but injective too.
\end{proof}

\section{Components of the Tempered Dual}
\label{sec-components-of-tempered-dual}

We shall now review   Vogan's construction of the tempered dual\footnote{Vogan constructs the full admissible dual in \cite{Voganbook}; the fact that we are interested only in the tempered dual simplifies matters for us considerably.} of $G$,   and compare it with  Harish-Chandra's construction, which  we used in \cite{ConnesKasparovPaper1} to describe the reduced group $C^*$-algebra.

Vogan's construction is carried out in the context of $(\mathfrak{g},K)$-modules rather than that of unitary Hilbert space representations.  But we shall take advantage of the fact, due to Harish-Chandra, that every unitary admissible $(\mathfrak{g},K)$-module may be completed to a unitary representation of $G$, and usually make no distinction between the two contexts.  But whenever  it is helpful to do so we shall denote $(\mathfrak{g},K)$-modules using the letter $X$, and  Hilbert space representations using the letter $\Hilbert$.

The following definition formalizes the construction of representations from Vogan data.

\begin{definition}
\label{def-vogan-representations}
Let $(\mathfrak{q}, H, \delta)$ be a set of Vogan data for $G$, and let $L=N_G (\mathfrak{q})$. Let $T=MA$ as in Definition~\textup{\ref{def-theta-stable-data}}, and let $\varphi \in \mathfrak{a}_0^*$. 
\begin{enumerate}[\rm (i)]

\item We shall denote by  $X^L(\delta, i \varphi )$ the unitary principal series representation of $L$ that is obtained by unitary parabolic induction from  the character $\delta{\otimes} \exp (i \varphi)$ of $H$.  When $\varphi=0$ we shall abbreviate the notation to $X^L(\delta)$ or even $X(\delta)$.

\item We shall denote by  $X^G(\mathfrak{q}, H ,\delta, i \varphi)$ the representation of $G$ that is obtained by  cohomologically inducing $X^L(\delta, i \varphi)$ from $L$ to $G$  via the parabolic subalgebra $\mathfrak{q}$.  See \cite[Def.~6.5.2]{Voganbook}; this is the \emph{standard representation with parameters $(\mathfrak{q}, H ,\delta, i \varphi)$}.  Again, when $ \varphi=0$ we shall abbreviate the notation to $X^G(\mathfrak{q},H,\delta)$ or $X(\mathfrak{q},H,\delta)$.

\end{enumerate}
\end{definition}

\begin{remark}
\label{rem-vogan-notation}
Vogan studies the representations $X^G(\mathfrak{q}, H ,\delta, \nu)$
associated to all real-linear $\nu\colon \mathfrak{a}_0\to \C$.  Because our focus is on the tempered dual only, we are concerned only with imaginary-valued $\nu$, which we have written as $\nu = i \varphi$, as above.
\end{remark}

Up to equivalence, the representation $X^G(\mathfrak{q}, H ,\delta, i \varphi)$ depends only on the $K$-conjugacy class of $(\mathfrak{q},H,\delta, i \varphi)$. It is unitarizable,  tempered and a finite direct sum of irreducible representations (generically $X^G(\mathfrak{q}, H ,\delta, i \varphi)$ is  itself irreducible). This follows, for example, from Theorem~\ref{thm-vogan-zuckerman1} below. 

The representations $X^G(\mathfrak{q}, H ,\delta, i \varphi)$ fit into the Harish-Chandra picture of the tempered dual as follows.
According to Harish-Chandra, the components of the tempered dual of $G$ are parametrized by \emph{associate classes} of pairs $(P,\sigma)$ consisting of a parabolic subgroup $P$ of $G$ and an irreducible, square-integrable representation $\sigma$ of the compactly generated part $M$ in the Langlands decomposition $P=MAN$.  See for instance \cite{ConnesKasparovPaper1} for a summary that is tailored to our viewpoint.

Let  $[P,\sigma]$ be an associate class,  with $P=MAN$. We shall construct from it a set of Vogan data.

First, choose a Cartan subgroup  $T$ (centralizer of a maximal torus) in   $K\cap M$, and set 
$H = TA$.

The compact factor  $T$ is a compact Cartan subgroup of $M$, and the Vogan-Zuckerman method attaches to the discrete series representation  $\sigma$ of $M$ a character $\gamma$ of $T$ from which $\sigma$ may be obtained by cohomological induction using an appropriate $\theta$-stable parabolic subalgebra of $\mathfrak{m}$; see   \cite[Lem.~6.6.12]{Voganbook}. 

The character $\gamma$, in turn, determines a  $\theta$-stable  parabolic subalgebra $\mathfrak{q}\subseteq \mathfrak{g}$, as well as  a character $\delta$ of $T$, which is a $\rho$-shift of $\gamma$, and we obtain a set of Vogan data $(\mathfrak{q}, H, \delta)$; see \cite[Prop.~6.6.2]{Voganbook}.

The $K$-conjugacy class $[\mathfrak{q}, H, \delta]$ depends only on the associate class $[P,\sigma]$, and the representations of $G$ associated to the two sets of data are related as follows:

\begin{theorem}[{\cite[Thm.~6.6.15]{Voganbook}}]
\label{thm-vogan-zuckerman1}
Suppose that the $K$-conjugacy class of Vogan data $[\mathfrak{q},H,\delta]$ corresponds to the associate class  $[P,\sigma]$, as above. For every $\varphi \in \mathfrak{a}_0^*$, the unitary  $(P,\sigma)$-principal series representation $\pi_{ \sigma, \varphi}$  on $ \Ind_P^G \Hilbert_\sigma{\otimes} \C_{i \varphi} $
is equivalent to the representation  $X^G(\mathfrak{q}, H ,\delta, i \varphi)$ described in Definition~\textup{\ref{def-vogan-representations}}.  \qed
\end{theorem}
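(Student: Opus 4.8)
The statement is quoted as \cite[Thm.~6.6.15]{Voganbook}, so the natural plan is not to reprove it from scratch but to explain why Vogan's theorem, stated in his language, translates into the assertion above. First I would recall precisely what Vogan's Theorem 6.6.15 says: for a set of discrete $\theta$-stable data $(\mathfrak{q},H,\delta)$ built from a discrete series (or limit) representation $\sigma$ of the Levi $M$ of a cuspidal parabolic $P = MAN$ via \cite[Prop.~6.6.2, Lem.~6.6.12]{Voganbook}, the cohomologically induced standard module $X^G(\mathfrak{q},H,\delta,\nu)$ is infinitesimally equivalent, for every continuous parameter $\nu$, to the standard module $\Ind_P^G(\sigma \otimes e^\nu \otimes 1)$ obtained by ordinary (real) parabolic induction. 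The only adjustments needed for our setting are: (a) we restrict $\nu$ to the unitary axis $\nu = i\varphi$ with $\varphi \in \mathfrak{a}_0^*$, which is the case in which both sides are genuinely unitary and tempered; and (b) we pass from the Harish-Chandra module to its Hilbert space completion, which is harmless by the remarks already made in the text (every unitary admissible $(\mathfrak{g},K)$-module completes uniquely to a unitary representation of $G$).

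The key steps, in order, would be: (1) Given the associate class $[P,\sigma]$ with $P = MAN$, fix a compact Cartan $T \subseteq K\cap M$, set $H = TA$, and invoke \cite[Lem.~6.6.12]{Voganbook} to realize $\sigma$ as cohomologically induced on $M$ from a character $\gamma$ of $T$ via a $\theta$-stable parabolic of $\mathfrak{m}$. (2) Apply \cite[Prop.~6.6.2]{Voganbook} to pass from $\gamma$ to the pair $(\mathfrak{q},\delta)$, where $\mathfrak{q} \subseteq \mathfrak{g}$ is $\theta$-stable and $\delta$ is the appropriate $\rho$-shift of $\gamma$, thereby producing the Vogan data $(\mathfrak{q},H,\delta)$; check that its $K$-conjugacy class depends only on $[P,\sigma]$ (independence of the choice of $T$ and of the intermediate choices, which is part of Vogan's setup). (3) Observe that transitivity of cohomological induction, together with the identification of $X^L(\delta,i\varphi)$ as the $L$-principal series induced from $\delta\otimes e^{i\varphi}$ on $H = TA$, means that $X^G(\mathfrak{q},H,\delta,i\varphi)$ is obtained by first parabolically inducing within $L$ and then cohomologically inducing from $L$ to $G$ — which is exactly Definition~\ref{def-vogan-representations}(ii). (4) Quote \cite[Thm.~6.6.15]{Voganbook} to conclude that this agrees, as a $(\mathfrak{g},K)$-module, with $\Ind_P^G(\Hilbert_\sigma \otimes \C_{i\varphi})$; note that here one uses that $\sigma$ is discrete series (not merely a limit), so that $M$ has a compact Cartan and Vogan's hypotheses apply verbatim. (5) Finally, since $\varphi$ is real and $\sigma$ is unitary, both sides are unitary admissible $(\mathfrak{g},K)$-modules, so the infinitesimal equivalence upgrades to unitary equivalence of the Hilbert space representations $\pi_{\sigma,\varphi}$ and the completion of $X^G(\mathfrak{q},H,\delta,i\varphi)$.

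The main obstacle is bookkeeping rather than genuine mathematics: one must match Vogan's normalizations of cohomological and parabolic induction (including the $\rho$-shifts hidden in the passage $\gamma \rightsquigarrow \delta$ and in the definition of the standard module) with the normalizations used in \cite{ConnesKasparovPaper1} and in the Harish-Chandra picture recalled above, and one must confirm that the well-definedness clauses — the $K$-conjugacy class of $(\mathfrak{q},H,\delta)$ depending only on $[P,\sigma]$, and conversely — are precisely what Vogan establishes. Since all of this is carried out in \cite[Ch.~6]{Voganbook}, the proof is essentially a citation accompanied by the translation of hypotheses; no independent argument is required, which is why the statement is marked \qed in the excerpt.
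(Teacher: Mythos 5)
Your proposal is correct and matches the paper's treatment: the paper likewise offers no independent argument for Theorem~\ref{thm-vogan-zuckerman1}, presenting it as a direct citation of Vogan's Theorem~6.6.15, with the construction of $(\mathfrak{q},H,\delta)$ from $[P,\sigma]$ via \cite[Lem.~6.6.12]{Voganbook} and \cite[Prop.~6.6.2]{Voganbook} carried out in the preceding discussion exactly as in your steps (1)--(2), and the passage between $(\mathfrak{g},K)$-modules and unitary Hilbert space representations handled by the same Harish-Chandra completion remark. Nothing further is needed.
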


Putting this together with   known facts about how associate classes organize the tempered dual, as summarized in \cite{ConnesKasparovPaper1} for instance, or alternatively working directly with Vogan data, we obtain: 

\begin{theorem}
\label{thm-vogan-zuckerman0}
Let $(\mathfrak{q}, H, \delta)$ be a set of Vogan data for $G$, let $H=TA$ as in Definition~\textup{\ref{def-theta-stable-data}}, and let $\varphi \in \mathfrak{a}_0^*$.  

\begin{enumerate}[\rm (i)]

\item Every tempered irreducible representation of $G$ arises    as one of the summands of some $X^G(\mathfrak{q}, H ,\delta, i \varphi)$, and it does so from a unique set of Vogan data, up to $K$-conjugacy.

\item As $\varphi$ varies, with $(\mathfrak{q}, H, \delta)$ fixed, the irreducible summands of  all the representations  $X^G(\mathfrak{q}, H ,\delta, i \varphi)$  belong to a single component of the tempered dual, and exhaust it.   \qed

\end{enumerate}
\end{theorem}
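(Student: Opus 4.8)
The plan is to deduce the theorem from Theorem~\ref{thm-vogan-zuckerman1} together with Harish-Chandra's description of the tempered dual of $G$. Recall that Harish-Chandra organizes the tempered dual into pairwise disjoint components indexed by the associate classes $[P,\sigma]$ of the pairs considered above: the component attached to $[P,\sigma]$, with $P=MAN$, is the set of irreducible constituents of the unitary principal series representations $\pi_{\sigma,\varphi}$ as $\varphi$ ranges over $\mathfrak{a}_0^*$; distinct associate classes give disjoint components, and these components exhaust the tempered dual. The discussion preceding the theorem, following \cite[Prop.~6.6.2, Lem.~6.6.12]{Voganbook}, produces a well-defined map $[P,\sigma]\mapsto[\mathfrak{q},H,\delta]$ from associate classes to $K$-conjugacy classes of Vogan data. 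The first thing I would record is that this map is a bijection: surjectivity amounts to running the construction in reverse --- from $(\mathfrak{q},H,\delta)$ with $H=TA$, the split part $A$ singles out a cuspidal parabolic $P=MAN$ of $G$, the $\rho$-unshifted character $\gamma$ of $T$ together with $\mathfrak{q}\cap\mathfrak{m}$ recovers a discrete series representation $\sigma$ of $M$ by Vogan--Zuckerman, and $[P,\sigma]$ then maps back to $[\mathfrak{q},H,\delta]$ --- while injectivity holds because $[P,\sigma]$ can be read off from $[\mathfrak{q},H,\delta]$. All of this is contained in \cite{Voganbook}; it is the one genuinely substantive input, and I would simply invoke it.

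Granting the bijection, fix a set of Vogan data $(\mathfrak{q},H,\delta)$ and let $[P,\sigma]$ be the corresponding associate class. By Theorem~\ref{thm-vogan-zuckerman1} we have $X^G(\mathfrak{q},H,\delta,i\varphi)\cong\pi_{\sigma,\varphi}$ for every $\varphi\in\mathfrak{a}_0^*$. Hence the set of irreducible summands of the representations $X^G(\mathfrak{q},H,\delta,i\varphi)$, as $\varphi$ varies, coincides with the set of irreducible constituents of the $\pi_{\sigma,\varphi}$, which is exactly the Harish-Chandra component attached to $[P,\sigma]$. This gives part (ii): all such summands lie in a single component, and they exhaust it.

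For part (i), existence is immediate: a tempered irreducible $\pi$ lies in the Harish-Chandra component of some associate class $[P,\sigma]$, hence is a constituent of some $\pi_{\sigma,\varphi}$, hence by Theorem~\ref{thm-vogan-zuckerman1} a summand of $X^G(\mathfrak{q},H,\delta,i\varphi)$ for the Vogan data matched to $[P,\sigma]$. For uniqueness, suppose $\pi$ is also a summand of $X^G(\mathfrak{q}',H',\delta',i\varphi')$ for some other Vogan data and some $\varphi'$; by Theorem~\ref{thm-vogan-zuckerman1} again, $\pi$ is a constituent of $\pi_{\sigma',\varphi'}$ for the pair $(P',\sigma')$ matched to $(\mathfrak{q}',H',\delta')$. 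Since $\pi$ determines its associate class, $[P,\sigma]=[P',\sigma']$, and injectivity of the bijection then forces $[\mathfrak{q},H,\delta]=[\mathfrak{q}',H',\delta']$ as $K$-conjugacy classes.

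The only genuine obstacle is the bijectivity of the correspondence $[P,\sigma]\leftrightarrow[\mathfrak{q},H,\delta]$ --- in essence, the assertion that the Vogan--Zuckerman passage between discrete series of a Levi $M$ and $\theta$-stable data on its compact Cartan subgroup is a conjugation-compatible bijection, and that the $\rho$-shift relating $\gamma$ to $\delta$ and the induction step from $M$ to $G$ respect the associate relation. Everything else is bookkeeping once Theorem~\ref{thm-vogan-zuckerman1} is available. One could instead argue entirely inside Vogan's framework, using \cite[Thm.~6.6.15]{Voganbook} and his classification of the irreducible tempered $(\mathfrak{g},K)$-modules by sets of Vogan data, bypassing Harish-Chandra altogether; but the reduction above is the shortest route.
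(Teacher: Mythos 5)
Your argument is exactly the route the paper takes: Theorem~\ref{thm-vogan-zuckerman1} identifies $X^G(\mathfrak{q},H,\delta,i\varphi)$ with the Harish-Chandra principal series $\pi_{\sigma,\varphi}$, and the theorem then follows from the way associate classes organize the tempered dual together with the correspondence $[P,\sigma]\leftrightarrow[\mathfrak{q},H,\delta]$, which both you and the paper delegate to Vogan's book (the paper states this reduction in a single sentence before the theorem). Your write-up is correct and simply makes explicit the bookkeeping the paper leaves implicit.
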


In short, the components of the tempered dual of $G$ are  param\-etrized by the $K$-conjugacy classes of sets of  Vogan data, and  this param\-etrization determines a bijection 
\begin{equation*}
    \label{eq-bijection-of-parameters}
\bigl \{ \text{\rm Associate classes $[P,\sigma]$}\,\bigr \} 
\stackrel \cong \longrightarrow 
\bigl \{ \, \text{\rm $K$-conjugacy classes $[\mathfrak{q},H,\delta]$} \,\bigr \} .
\end{equation*}


\section{The Knapp-Stein and Vogan Intertwining Groups}
\label{sec-intertwining-groups}

Let $[P,\sigma]$ be an associate class, as in the previous section. Harish-Chandra showed that the decomposition of the $(P,\sigma)$-principal series  representation $\pi_{\sigma,\varphi}$
into irreducible constituents is governed by the interwining group
\[
W_\sigma= \bigl \{w \in N_K(\ka_0)/Z_K(\ka_0) : \operatorname{Ad}_w ^* \sigma \simeq \sigma\, \bigr \} ,
\]
and  more specifically by the subgroup
\[
W_{\sigma,\varphi}  = \bigl \{w \in W_\sigma : \operatorname{Ad}_w ^* \varphi = \varphi  \, \bigr \} .
\]
Each element of $W_{\sigma,\varphi}  $   corresponds to  a unitary self-intertwiner of the representation $\pi_{\sigma,\varphi}$ on  $\Ind_P^G \Hilbert_\sigma{\otimes} \C_{i \varphi}$, and  together these unitaries span the intertwining algebra. See \cite[Thm.~38.1]{HC3}. 

In addition, Langlands proved that the unitary principal series representations  $\pi_{\sigma,\varphi}$ and $\pi_{\sigma',\varphi'}$ on  $\Ind_P^G \Hilbert_\sigma {\otimes} \C_{i \varphi}$
and $\Ind_{P'}^G \Hilbert_{\sigma'} {\otimes} \C_{i\varphi'}$ are equivalent if and only if the data $(A,\sigma, \varphi)$ and  $(A',\sigma', \varphi')$ are conjugate by an element of $K$, and that otherwise the two representations are disjoint. See \cite[pp.142~\&~149-151]{MR1011897} or \cite[Thm.~14.90]{Knapp1}.

Now let  $(\mathfrak{q},H,\delta)$ be a set of Vogan  data. As usual let  $L=N_G(\mathfrak{q})$ and let  $H=TA$. If we  set
\begin{equation*}
    W  = N_{K\cap L}(\ka_0)/Z_{K \cap L}(\ka_0) ,
\end{equation*}
then the  intertwining groups in the Vogan-Zuckerman theory are  
\begin{equation*}
W_\delta= \bigl \{w \in W  : \operatorname{Ad}_w ^* \delta = \delta \bigr \}
\end{equation*}
and 
\[
W_{\delta,\varphi}  = \bigl \{w \in W_\delta : \operatorname{Ad}_w ^* \varphi = \varphi  \, \bigr \} .
\]
See \cite[Defs.~4.4.1 and 4.4.9]{Voganbook}.  

\begin{theorem}[{\cite[Thm.~6.5.12]{Voganbook}}]
Let $(\mathfrak{q}',H',\delta')$ be another set of Vogan data and let $\varphi'\in {\mathfrak{a}'}_{\!\!0}^*$.  If the representations $X^G(\mathfrak{q}, H ,\delta, i \varphi)$ and $X^G(\mathfrak{q}', H' ,\delta', i \varphi')$ have equivalent subrepresentations, then $(\mathfrak{q},H,\delta,i \varphi)$ and  $(\mathfrak{q}',H',\delta',i \varphi')$ are conjugate by an element of $K$.
\end{theorem}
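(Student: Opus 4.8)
The plan is to transport the assertion into Harish-Chandra's picture of the tempered dual, where the analogous rigidity statement is Langlands' disjointness theorem (recalled in Section~\ref{sec-intertwining-groups}), prove it there, and then translate the conclusion back through the dictionary of Section~\ref{sec-components-of-tempered-dual}. As a first, purely formal, step I would note that $X^G(\mathfrak{q},H,\delta,i\varphi)$ and $X^G(\mathfrak{q}',H',\delta',i\varphi')$ are unitary and completely reducible (as recorded after Definition~\ref{def-vogan-representations}), so having equivalent subrepresentations is the same as having a common irreducible constituent $\pi$.

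By Theorem~\ref{thm-vogan-zuckerman1} we have $X^G(\mathfrak{q},H,\delta,i\varphi)\cong\pi_{\sigma,\varphi}$ and $X^G(\mathfrak{q}',H',\delta',i\varphi')\cong\pi_{\sigma',\varphi'}$, where $[P,\sigma]$ and $[P',\sigma']$ (with $P=MAN$, $P'=M'A'N'$) are the associate classes matched to $[\mathfrak{q},H,\delta]$ and $[\mathfrak{q}',H',\delta']$ under the bijection of Section~\ref{sec-components-of-tempered-dual}. Thus $\pi$ is a constituent of both $\pi_{\sigma,\varphi}$ and $\pi_{\sigma',\varphi'}$, so these two principal series are not disjoint, and Langlands' theorem forces the triples $(A,\sigma,\varphi)$ and $(A',\sigma',\varphi')$ to be conjugate by some $k\in K$. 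In particular $\pi$ determines the component of the tempered dual it lies in, hence (equivalently, by Theorem~\ref{thm-vogan-zuckerman0}(i)) the $K$-conjugacy class $[\mathfrak{q},H,\delta]=[\mathfrak{q}',H',\delta']$; since the statement to be proved is symmetric and transitive under $K$-conjugation, we may replace the primed quadruple by a $K$-conjugate and so assume $(\mathfrak{q}',H',\delta')=(\mathfrak{q},H,\delta)$, and in particular $L'=L$, $A'=A$, while $\pi$ is still a common constituent because conjugation by an element of $K\subseteq G$ carries a $G$-representation to an isomorphic one. It then remains to show that the parameters $\varphi,\varphi'\in\mathfrak{a}_0^*$ differ by an element of the Vogan intertwining group $W_\delta$: indeed every $w\in W_\delta$ is represented by an element of $N_{K\cap L}(\mathfrak{a}_0)\subseteq K$ that normalizes $\mathfrak{q}$ and $H$ and fixes $\delta$, so $\varphi'\in W_\delta\varphi$ would upgrade the identification above to a $K$-conjugacy of the full quadruples. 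But now $\pi$ is a common constituent of $\pi_{\sigma,\varphi}$ and $\pi_{\sigma,\varphi'}$ for one and the same $\sigma$, so Langlands gives $\varphi'\in W_\sigma\varphi$, and one concludes using the identification $W_\sigma=W_\delta$ of the Harish-Chandra and Vogan intertwining groups.

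The steps that carry the real content — and that I expect to be the main obstacle — are the two ``compatibility'' assertions used above: that the correspondence $[P,\sigma]\leftrightarrow[\mathfrak{q},H,\delta]$ of Section~\ref{sec-components-of-tempered-dual} is genuinely $K$-equivariant (so that $K$-conjugacy of Harish-Chandra data really does transport to $K$-conjugacy of Vogan data), and that $W_\sigma$ agrees with $W_\delta$ as a group acting on $\mathfrak{a}_0^*$. Both amount to unwinding the Vogan--Zuckerman construction of $\sigma$ from the compact-Cartan character $\gamma$ (via \cite[Lem.~6.6.12, Prop.~6.6.2]{Voganbook}) and checking that every intermediate choice is canonical up to conjugacy; this is essentially the content of the surrounding results in \cite[\S\S4.4, 6.5, 6.6]{Voganbook}. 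An alternative that sidesteps Harish-Chandra altogether is to argue entirely inside Vogan's framework: compute the lowest $K$-type of $X^G(\mathfrak{q},H,\delta,i\varphi)$ (it encodes $\mathfrak{q}$ and $\delta$ up to $K$-conjugacy) and then analyze the Vogan intertwining algebra to recover $i\varphi$ up to $W_\delta$ — but this is in effect a reproof of \cite[Thm.~6.5.12]{Voganbook}.
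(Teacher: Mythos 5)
The paper does not actually prove this statement: it is imported verbatim from Vogan's monograph \cite[Thm.~6.5.12]{Voganbook}, whose own argument is intrinsic to the cohomological-induction picture (lowest $K$-types and the intertwining-operator analysis), essentially the route you sketch only at the very end as an ``alternative.'' Your proposal instead derives the statement by transporting it through Theorem~\ref{thm-vogan-zuckerman1} to the Harish-Chandra picture, invoking Langlands' disjointness theorem, and pulling the conclusion back with $W_\sigma=W_\delta$. As a derivation from the inputs this paper quotes, the structure is sound: complete reducibility reduces ``equivalent subrepresentations'' to a common irreducible constituent; Langlands forces the cuspidal data to be $K$-conjugate; the Section~\ref{sec-components-of-tempered-dual} dictionary then identifies the discrete parameters $[\mathfrak{q},H,\delta]$ and $[\mathfrak{q}',H',\delta']$; and, after reducing to equal discrete parameters, Langlands again gives $\varphi'\in W_\sigma\varphi$, which Lemma~\ref{lem-w-sigma-equals-w-delta} converts into a representative in $N_{K\cap L}(\mathfrak{a}_0)$ fixing $\delta$ and normalizing $\mathfrak{q}$ and $H$, hence a $K$-conjugacy of the full quadruples. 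What your route buys is that nothing beyond Theorem~\ref{thm-vogan-zuckerman1}, Langlands, and the dictionary is needed; what it costs is exactly what you flag: the well-definedness and bijectivity of $[P,\sigma]\leftrightarrow[\mathfrak{q},H,\delta]$ are themselves taken from \cite[\S 6.6]{Voganbook}, so the argument is less an independent proof than a transfer across cited black boxes, and its non-circularity rests on the fact that Vogan's Thm.~6.6.15 and the Ch.~6.6 correspondence do not themselves rely on Thm.~6.5.12.

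Two small cautions on sourcing within this paper. First, Lemma~\ref{lem-w-sigma-equals-w-delta} appears \emph{after} the theorem you are proving; this is harmless because its proof uses only the easy direction (conjugate data give equivalent representations) together with Theorem~\ref{thm-vogan-zuckerman1} and Langlands, but you should say so explicitly. Second, avoid leaning on the uniqueness clause of Theorem~\ref{thm-vogan-zuckerman0}(i): its content is so close to the statement being proved that quoting it invites circularity; in your argument it is also unnecessary, since once Langlands gives $[P,\sigma]=[P',\sigma']$ you only need that the correspondence of Section~\ref{sec-components-of-tempered-dual} is a well-defined bijection on classes to conclude $[\mathfrak{q},H,\delta]=[\mathfrak{q}',H',\delta']$.
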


Let  $(\mathfrak{q},H,\delta)$ be the set of Vogan data that is constructed from $(P,\sigma)$ as in the discussion prior to the statement of Theorem~\ref{thm-vogan-zuckerman1}.  Since  both the Vogan and Knapp-Stein   intertwining groups   may be faithfully represented as groups of linear automorphisms of the same vector space $\mathfrak{a}_0$,  they may be compared with one another. In fact they are equal:

\begin{lemma}
\label{lem-w-sigma-equals-w-delta}
If  $(\kq, H, \delta)$ is the set of  Vogan data constructed from  the pair $(P, \sigma)$, as in the discussion prior to Theorem \textup{\ref{thm-vogan-zuckerman1}}, then $W_\sigma =  W_\delta$.
\end{lemma}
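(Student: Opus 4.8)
The plan is to realise both $W_\sigma$ and $W_\delta$ as subgroups of the single ``split Weyl group'' $\mathcal{W}=N_K(\ka_0)/Z_K(\ka_0)$ and to show they have the same elements. That $W=N_{K\cap L}(\ka_0)/Z_{K\cap L}(\ka_0)$, and hence $W_\delta$, embeds in $\mathcal{W}$ is immediate: an element of $N_{K\cap L}(\ka_0)$ lying in $Z_K(\ka_0)$ already lies in $Z_{K\cap L}(\ka_0)$. The link between the two groups is the character $\gamma$ of $T$. Recall that the passage from $(P,\sigma)$ to $(\kq,H,\delta)$ factors as: attach to the discrete series $\sigma$ of $M$ the character $\gamma$ of $T$ (\cite[Lem.~6.6.12]{Voganbook}); then read off from $\gamma$ both the $\theta$-stable parabolic $\kq\subseteq\kg$ and the character $\delta$ of $T$, which is a $\rho$-shift of $\gamma$ determined by $\kq$ (\cite[\S6.6]{Voganbook}). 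Each of the assignments $\sigma\leftrightarrow\gamma$, $\gamma\mapsto\kq$, $\gamma\mapsto\delta$ is canonical, hence intertwined by $\Ad_k$ for any $k\in K$ that normalizes both $M$ and $\kt_0$. The core of the argument is to check that, for $w\in\mathcal{W}$, each of the statements $w\in W_\sigma$ and $w\in W_\delta$ is equivalent to the condition $(\star)$: \emph{$w$ has a representative $\tilde w\in N_K(\ka_0)$ that normalizes $T$ and satisfies $\Ad_{\tilde w}^*\gamma=\gamma$}.

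Step: $(\star)\Leftrightarrow w\in W_\delta$. If $\tilde w$ is as in $(\star)$, then by equivariance of $\gamma\mapsto\kq$ it normalizes $\kq$, so $\tilde w\in L=N_G(\kq)$ and $\tilde w\in N_{K\cap L}(\ka_0)$ represents an element of $W$; by equivariance of $\gamma\mapsto\delta$ it fixes $\delta$; so $w\in W_\delta$. Conversely, given $w\in W_\delta$, pick a representative $\tilde w\in N_{K\cap L}(\ka_0)$ with $\Ad_{\tilde w}^*\delta=\delta$. Since $L$ is quasi-split with maximal split subalgebra $\ka_0$, the algebra $\mathfrak{z}_{\mathfrak{l}_0}(\ka_0)$ is abelian and hence equals the Cartan subalgebra $\kt_0\oplus\ka_0$, so $Z_{K\cap L}(\ka_0)^0=T^0$; as $\tilde w$ normalizes $Z_{K\cap L}(\ka_0)$ it normalizes $T^0$, hence $\kh_0=\kt_0\oplus\ka_0$, hence the Cartan subgroup $H$ with Lie algebra $\kh_0$ and its compact part $T$. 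Now $\tilde w\in L$ normalizes $\kq$, so the $\kq$-dependent $\rho$-shift relating $\delta$ and $\gamma$ is $\tilde w$-invariant, forcing $\Ad_{\tilde w}^*\gamma=\gamma$; thus $\tilde w$ witnesses $(\star)$.

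Step: $(\star)\Leftrightarrow w\in W_\sigma$. If $\tilde w$ is as in $(\star)$, it normalizes $M$ (as it normalizes $Z_G(\ka_0)=MA$) and $\kt_0$ (as it normalizes $T$) and fixes $\gamma$, so by equivariance of $\gamma\mapsto\sigma$ one gets $\Ad_{\tilde w}^*\sigma\simeq\sigma$, i.e.\ $w\in W_\sigma$. Conversely, let $w\in W_\sigma$, with representative $\tilde w\in N_K(\ka_0)$, $\Ad_{\tilde w}^*\sigma\simeq\sigma$. Then $\Ad_{\tilde w}\kt_0$ is again a compact Cartan subalgebra of $\km_0$; any two such are $(K\cap M)^0$-conjugate, and $K\cap M\subseteq Z_K(\ka_0)$ because $M$ commutes with $A$, so after multiplying $\tilde w$ by an element of $(K\cap M)^0$ — which affects neither $[\tilde w]\in\mathcal{W}$ nor the equivalence class of $\Ad_{\tilde w}^*\sigma$ — we may assume $\tilde w$ normalizes $\kt_0$, hence $\kh_0$, $H$ and $T$. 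Since the Vogan--Zuckerman correspondence identifies the discrete series of $M$ with characters of $T$ up to $N_{K\cap M}(\kt_0)$-conjugacy, and since the character attached to the discrete series $\Ad_{\tilde w}^*\sigma\simeq\sigma$ is $\Ad_{\tilde w}^*\gamma$ by equivariance, the character $\Ad_{\tilde w}^*\gamma$ is $N_{K\cap M}(\kt_0)$-conjugate to $\gamma$; multiplying $\tilde w$ once more by a suitable element of $N_{K\cap M}(\kt_0)\subseteq Z_K(\ka_0)$ (which also normalizes $\kt_0$ and $T$) yields a representative of $w$ satisfying $(\star)$. Combining the two equivalences gives $W_\sigma=W_\delta$.

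The difficulty here is not a subtle inequality but the bookkeeping just outlined: one must extract from \cite[\S4.3, \S6.6]{Voganbook} the precise statements that the three constructions $\sigma\mapsto\gamma\mapsto(\kq,\delta)$ are canonical (so $K$-equivariant) and that $\sigma$ determines $\gamma$ up to $N_{K\cap M}(\kt_0)$-conjugacy, and then check at each adjustment that the conjugating element can be chosen inside $Z_K(\ka_0)$ — and, in the $W_\delta$ analysis, inside $L$ — so that the class in $\mathcal{W}$ is unchanged; the inclusion $K\cap M\subseteq Z_K(\ka_0)$ is the hinge. I note a tempting shortcut that fails: one cannot simply compare the dimensions of the endomorphism algebras of the two equivalent representations in Theorem~\ref{thm-vogan-zuckerman1}, since that dimension equals the order of the Knapp--Stein $R$-group, which in general is properly contained in $W_\sigma$.
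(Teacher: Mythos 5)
Your argument is correct in outline but takes a genuinely different route from the paper. The paper never opens up the construction $\sigma\rightsquigarrow(\kq,H,\delta)$ at all: it argues at the level of representations, noting that for $w\in W_\delta$ the standard modules $X^G(\kq,H,\delta,i\varphi)$ and $X^G(\kq,H,\delta,iw\varphi)$ are equivalent for every $\varphi$, hence by Theorem~\ref{thm-vogan-zuckerman1} so are $\pi_{\sigma,\varphi}$ and $\pi_{\sigma,w\varphi}$; Langlands' disjointness theorem then gives for each $\varphi$ some $w'_\varphi\in W_\sigma$ with $w'_\varphi\varphi=w\varphi$, and since $W_\sigma$ is finite a single $w'$ must agree with $w$ on a spanning set of $\ka_0^*$, so $w\in W_\sigma$; the reverse inclusion is the mirror argument using Vogan's disjointness theorem (\cite[Thm.~6.5.12]{Voganbook}, quoted just before the lemma). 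Your proof instead matches parameters directly, characterizing both groups inside $N_K(\ka_0)/Z_K(\ka_0)$ by the condition $(\star)$ on the character $\gamma$. What this buys is a more structural statement (explicit representatives, no pigeonhole over $\varphi$); what it costs is that the weight-bearing steps become the canonicity/$K$-equivariance of the assignments $\sigma\leftrightarrow\gamma\mapsto(\kq,\delta)$ and, above all, the uniqueness assertion that for the (generally disconnected) group $M$ the character $\gamma$ is determined by $\sigma$ up to $N_{K\cap M}(\kt_0)$-conjugacy. That last statement is true but is exactly the disjointness/uniqueness input that the paper's soft argument invokes only for $G$ and $L$ (where it is already quoted); you would need to extract it for $M$ from \cite[Lem.~6.6.12]{Voganbook} together with the uniqueness part of Vogan's classification applied to $M$, and your write-up flags this but does not supply it, so it is the one point that must be nailed down before your proof is complete. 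A small simplification: since $H=TA$ is the Levi factor of a minimal parabolic of $L$, one has $Z_{K\cap L}(\ka_0)=K\cap Z_L(A)=K\cap H=T$ outright, so any representative in $N_{K\cap L}(\ka_0)$ normalizes $T$ without the Lie-algebra detour. Your closing remark is also apt: comparing intertwining algebras only yields $|R_\sigma|=|R_\delta|$ (Corollary~\ref{cor-r-delta-equals-r-sigma}), not $W_\sigma=W_\delta$.
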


\begin{proof}
Suppose that $w$ is an element in $W_\delta$, acting on $\ka_0^*$ through $w(\varphi)=\operatorname{Ad}_w ^* \varphi$. Then the representations $X^G(\mathfrak{q}, H ,\delta, i\varphi)$ and $X^G(\mathfrak{q}, H ,\delta, iw(\varphi))$ are equivalent, for all $\varphi$, and therefore by Theorem~\ref{thm-vogan-zuckerman1} the representations $\pi_{\sigma,\varphi}$ and $\pi_{\sigma,w(\varphi)}$ are equivalent, for all $\varphi$.  So by Langlands' aforementioned result, for each $\varphi$ there is  an element $w'_\varphi \in W_\sigma$ such that $w'_\varphi(\varphi)  = w(\varphi)$.

Since $W_\sigma$ is finite, the elements $w'_\varphi$  cannot all be distinct from one another. Indeed there must exist a single $w'\in W_\sigma$ such that 
\[
w'(\varphi) = w'_\varphi (\varphi) = w(\varphi)
\]
for all $\varphi$ in a spanning set for $\mathfrak{a}_0^*$.  This means that $w'=w$, and we have proved that $W_\delta \subseteq W_\sigma$.  The argument can be reversed to prove the reverse inclusion.
\end{proof}
 
From now on we shall focus on the special case $\varphi =0$. The Knapp-Stein theory \cite{KS1,KS2} considerably refines Harish-Chandra's completeness result by decomposing $W_\sigma$ as a semidirect product 
\begin{equation}
\label{eq-knapp-stein-semidirect-product}
W_\sigma = W'_\sigma \rtimes R_\sigma ,
\end{equation}
in which intertwiners corresponding to the elements of $W'_\sigma$ all act as multiples of the identity on the Hilbert space $\Ind_P^G \Hilbert_\sigma$ of the representation $\pi_{\sigma,0}$, while the intertwiners corresponding to the elements of $R_\sigma$ act as a \emph{linearly independent} set of \emph{mutually commuting} intertwining operators on  $\Ind_P^G \Hilbert_\sigma$.  Moreover $R_\sigma$ is abelian and (after possibly adjusting each intertwiner by a multiplicative constant), it acts as a \emph{group} on $\Ind_P^G \Hilbert_\sigma$.   See \cite[Thm.~13.4]{KS2} and \cite[Main Thm.,~p.34]{Knapp_commut}, as well as \cite[Sec.~2]{ConnesKasparovPaper1} for a further discussion of these important issues.  It follows from these facts and Harish-Chandra's theorem that the intertwining algebra for the $(P,\sigma)$-principal series representation $\pi_{\sigma,0}$ is isomorphic to the group algebra of $R_\sigma$.   As a result,  the $(P,\sigma)$-principal series representation $\pi_{\sigma,0}$ decomposes as a direct sum of a finite set of mutually inequivalent irreducible representations that may be parametrized by the elements of the dual group $\widehat R_\sigma$.

We shall now present  some of the corresponding results in the Vogan-Zucker\-man theory.

\begin{definition}[{\cite[Def.~4.1.1]{Voganbook}}]
\label{def-of-delta-l}
Let  $\mathfrak{q}$ be a $\theta$-stable parabolic subgroup of $\mathfrak{g}$.  Let $L=N_G(\mathfrak{q})$, as usual, and let  $H=TA$ be a maximally split Cartan subgroup of $L$.   
\begin{enumerate}[\rm (i)]

    \item Denote by $\Delta(\mathfrak{l},\mathfrak{a}) {\subseteq} \mathfrak{a}_0^*$ the set of (non-zero) roots associated to the adjoint action of $\mathfrak{a}$ on $\mathfrak{l}$. These are the \emph{restricted roots} for $(\mathfrak{q},H,\delta)$. 
    
    \item Denote by $\overline{\Delta}(\mathfrak{l},\mathfrak{a}) {\subseteq} \mathfrak{a}_0^*$ set of those roots  $\alpha \in {\Delta}(\mathfrak{l},\mathfrak{a}) $ for which    $\frac{\alpha}{2} \notin {\Delta}(\mathfrak{l},\mathfrak{a}) $. These are the \emph{reduced roots} for $(\mathfrak{q},H,\delta)$. 
    
\end{enumerate}
\end{definition}

The following lemma  is a standard fact about  the set of restricted roots associated to the Iwasawa decomposition of any reductive group \cite[Thm.~6.57]{KnappBeyond}.

\begin{lemma} 
If  $(\mathfrak{q},H,\delta)$ is a set of Vogan data,  and if $H=TA$ and   $L=N_G(\mathfrak{q})$, as in Section~\ref{sec-components-of-tempered-dual}, then 
    \[
W(\overline \Delta ( \mathfrak{k},\mathfrak{l})) = N_{K\cap L} (\mathfrak{a}_0) / Z_{K\cap L} (\mathfrak{a}_0)
\]
if both groups are viewed as groups of linear  automorphisms of $\mathfrak{a}_0$. \qed
\end{lemma}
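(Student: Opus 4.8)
The assertion involves only the reductive group $L$, its maximal compact subgroup $K\cap L$, and the split torus $A$, so the plan is to discard the ambient group $G$ and recognize the statement as the classical description of the ``small'' Weyl group of a reductive group via restricted roots --- that is, as \cite[Thm.~6.57]{KnappBeyond} applied to $L$. The one hypothesis that must be checked is that $\mathfrak{a}_0$ is a \emph{maximal} abelian subspace of $\mathfrak{s}_0\cap\mathfrak{l}_0$ (the noncompact part of the Cartan decomposition of $\mathfrak{l}_0$); this is precisely what it means for $H=TA$ to be a maximally split Cartan subgroup of $L$ (see \cite[Sec.~4.3]{Voganbook}), and it guarantees that $\Delta(\mathfrak{l},\mathfrak{a})$ is the full restricted root system of $\mathfrak{l}_0$, with $\overline{\Delta}(\mathfrak{l},\mathfrak{a})$ its underlying reduced root system. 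For completeness I would include a short, classical proof, sketched in the next two paragraphs.

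For the inclusion $W(\overline{\Delta}(\mathfrak{l},\mathfrak{a}))\subseteq N_{K\cap L}(\mathfrak{a}_0)/Z_{K\cap L}(\mathfrak{a}_0)$: since the reflections $s_\alpha$, $\alpha\in\overline{\Delta}(\mathfrak{l},\mathfrak{a})$, generate $W(\overline{\Delta}(\mathfrak{l},\mathfrak{a}))$, it suffices to realize each $s_\alpha$ by an element of $K\cap L$ normalizing $\mathfrak{a}_0$. This is the standard $\mathfrak{su}(2)$-construction: choosing $0\ne X$ in the $\alpha$ restricted root space of $\mathfrak{l}_0$, the element $X+\theta X$ lies in $\mathfrak{k}_0\cap\mathfrak{l}_0$ and spans the compact direction of the copy of $\mathfrak{su}(2)$ spanned by $X$, $\theta X$ and $[X,\theta X]$; exponentiating a suitable multiple of $X+\theta X$ produces $k_\alpha\in K\cap L$ with $\operatorname{Ad}(k_\alpha)$ normalizing $\mathfrak{a}_0$ and acting on it as $s_\alpha$.

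For the reverse inclusion: any $k\in N_{K\cap L}(\mathfrak{a}_0)$ permutes the restricted root spaces of $\mathfrak{l}_0$, so $\operatorname{Ad}(k)$ preserves $\mathfrak{a}_0$ together with the set of restricted roots, hence with $\overline{\Delta}(\mathfrak{l},\mathfrak{a})$ and hence with the collection of Weyl chambers in $\mathfrak{a}_0$. Since $W(\overline{\Delta}(\mathfrak{l},\mathfrak{a}))$ acts simply transitively on those chambers, there is $w\in W(\overline{\Delta}(\mathfrak{l},\mathfrak{a}))$, realized by some $k_w\in N_{K\cap L}(\mathfrak{a}_0)$ by the previous paragraph, with $\operatorname{Ad}(k_w^{-1}k)$ fixing the positive chamber. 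Then $k_w^{-1}k$ normalizes $\mathfrak{n}_0=\bigoplus_{\alpha>0}\mathfrak{l}_{0,\alpha}$, hence normalizes the minimal parabolic subgroup $TAN$ of $L$; as a parabolic subgroup is its own normalizer, $k_w^{-1}k\in (TAN)\cap K = T\subseteq Z_{K\cap L}(\mathfrak{a}_0)$, and therefore $\operatorname{Ad}(k)$ and $w$ agree on $\mathfrak{a}_0$.

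The points still needing (routine) care are: extracting the equivalence ``$H=TA$ maximally split $\iff$ $\mathfrak{a}_0$ maximal abelian in $\mathfrak{s}_0\cap\mathfrak{l}_0$'' from Vogan's definitions; and the fact that $\mathfrak{l}_0$ is merely reductive, so that $\Delta(\mathfrak{l},\mathfrak{a})$ may be non-reduced --- handled by noting that a restricted root and its double give the same reflection, so $W(\Delta(\mathfrak{l},\mathfrak{a}))=W(\overline{\Delta}(\mathfrak{l},\mathfrak{a}))$, and that the centre $Z(\mathfrak{l}_0)$ is irrelevant to everything in sight. I expect the only genuinely substantive (though still classical) step to be the last one --- that an element of $N_{K\cap L}(\mathfrak{a}_0)$ stabilizing a Weyl chamber must centralize $\mathfrak{a}_0$ --- for which the cleanest route is the identity $N_L(P)=P$ for the minimal parabolic $P=TAN$ of $L$.
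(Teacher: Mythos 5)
Your proposal is correct and matches the paper's treatment: the paper likewise disposes of this lemma by observing it is the standard fact about restricted roots for the Iwasawa decomposition of a reductive group, citing \cite[Thm.~6.57]{KnappBeyond} applied to $L$ (with the maximally split hypothesis on $H=TA$ supplying exactly the condition that $\mathfrak{a}_0$ is maximal abelian in $\mathfrak{l}_0\cap\mathfrak{s}_0$). Your additional sketch of the classical argument is sound but goes beyond what the paper records.
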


\begin{definition}[{\cite[Def.~4.3.6]{Voganbook}}]
\label{def good root}
A root  $\alpha \in \overline{\Delta}(\mathfrak{l},\mathfrak{a}) $  is said to be  \emph{real}  if it is the restriction to $\mathfrak{a}$ of a root for the adjoint  action   of $\mathfrak{h} = \ka \oplus \kt$ on $\kl$ that vanishes on $\mathfrak{t}$. Otherwise,   $\alpha$ is said to be \emph{complex}. 
\end{definition}

Associated to each real root $\alpha\in \overline{\Delta}(\mathfrak{l},\mathfrak{a}) $, there exists a morphism (\cite[Notation 4.3.6]{Voganbook})
\begin{equation}
    \label{eq-lie-algebra-morphism-phi-alpha}
\phi_\alpha \colon \mathfrak{sl}(2, \R) \to \kl_0  
\end{equation}
that is compatible with $\theta$ and the standard Cartan involution on $\mathfrak{sl}(2,\R)$, and that maps the strictly upper triangular matrices into the $\alpha$-root space in $\mathfrak{l}_0$. See \cite[Def.~4.3.6]{Voganbook}. Since the group   $L$ is linear,  the Lie algebra morphism $\phi_\alpha$ exponentiates to a Lie group  morphism 
\begin{equation}
    \label{eq-lie-group-morphism-phi-alpha}
\Phi_\alpha \colon SL(2, \R) \to L .
\end{equation}
Using this we put 
\begin{equation}
    \label{eq-element-m-alpha}
m_\alpha = \Phi_\alpha \left ( \left [ \begin{smallmatrix}
-1 & 0 \\
0 & -1 
\end{smallmatrix} \right ] \right )\in L. 
\end{equation}
This is in fact an element of $T$, and it is independent of the choice of $\phi_\alpha$ \cite[Lem.~4.3.7]{Voganbook}. 

Of course, in the case of an essential set of Vogan data, all these structures match those introduced in Section~\ref{sec-discrete-theta-stable-data}.

\begin{definition}[{\cite[Notation~4.3.6]{Voganbook}}]
\label{def-good-roots}
Let $(\kq, H, \delta)$ be a set of Vogan data. 
The set of \emph{good roots with respect to $\delta$}, denoted 
$\Delta_\delta (\mathfrak{l},\mathfrak{a}) \subseteq \overline{\Delta}(\mathfrak{l},\mathfrak{a}) $,
is the set of  those roots $\alpha\in \overline{\Delta}(\mathfrak{l},\mathfrak{a}) $ such that either
\begin{enumerate}[\rm (i)]
\item  
$\alpha$ is real  and $\delta (m_\alpha  )   = 1$, or

\item 
$\alpha$ is complex.
\end{enumerate}
\end{definition}

Vogan shows in \cite[Lem.~4.3.12]{Voganbook} that $\Delta_\delta(\mathfrak{l},\mathfrak{a}) $ is a root system, and  makes the following definition:

\begin{definition}[{\cite[Def.~4.3.13]{Voganbook}}]
\label{def-w-0-group}
The group  $W^0_\delta$ is  the subgroup of $W(\overline{\Delta}(\mathfrak{l},\mathfrak{a}) )$ generated by the reflections associated to good roots.
\end{definition}


\begin{definition}[{\cite[Def.~4.3.13]{Voganbook}}]
Let $(\mathfrak{q},H,\delta)$ be a set of Vogan data. We define 
$R_\delta = W_\delta/W^0_\delta$. (This is actually a group since $W^0_\delta$ is   a normal subgroup of $W_\delta$ \cite[Lems.~4.3.12 and 4.3.14]{Voganbook}. But the group structure will not be used here.)
\end{definition}

\begin{theorem}[{\cite[Cor.~4.4.11 and Cor.~6.5.14]{Voganbook}}]
\label{thm-cohomological-induction-preserves-irreducibility}
The representation $X^L(\delta)$ of $L$ in Definition~\textup{\ref{def-vogan-representations}}  is a direct sum of $|R_\delta|$ inequivalent irreducible subrepresentations, and the same is true of the representation $X^G(\mathfrak{q}, H ,\delta)$ of $G$.
\end{theorem}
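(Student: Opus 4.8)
The plan is to prove the two assertions separately: the statement about $X^L(\delta)$ is essentially Knapp--Stein $R$-group theory applied to a minimal principal series of the quasi-split group $L$, while the statement about $X^G(\mathfrak{q},H,\delta)$ follows by transporting the decomposition through cohomological induction, which is well-behaved in the range imposed by part (v) of Definition~\ref{def-theta-stable-data}.

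First I would analyze $X^L(\delta)$. By parts (iii) and (iv) of Definition~\ref{def-theta-stable-data}, $L$ is quasi-split, $H=TA$ is maximally split, and $P=TAN$ is a \emph{minimal} parabolic subgroup of $L$, so $X^L(\delta)=X^L(\delta,0)$ is the minimal unitary principal series of $L$ induced from the character $\delta$ of the compact factor $T$. By Harish-Chandra's completeness theorem (the analogue for $L$ of \cite[Thm.~38.1]{HC3}), the commuting algebra of $X^L(\delta)$ is spanned by the normalized standard intertwining operators $A_w$ indexed by $w\in W_\delta$; note that at $\varphi=0$ the full group $W_\delta$ is in play since every element fixes $0$. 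The key step is the rank-one reduction underlying the $R$-group: for $w$ in the subgroup $W^0_\delta$ generated by reflections in good roots, $A_w$ is a scalar. Indeed a complex reduced root contributes the identity, while a real root $\alpha$ with $\delta(m_\alpha)=1$ reduces, through the morphism $\Phi_\alpha$ of \eqref{eq-lie-group-morphism-phi-alpha}, to the spherical minimal principal series of $SL(2,\R)$ at the point $0$ of the unitary axis, which is irreducible, forcing $A_{s_\alpha}=\pm\mathrm{id}$; products of such operators (the normalization cocycle being scalar-valued) remain scalar. Using the semidirect decomposition $W_\delta=W^0_\delta\rtimes R_\delta$ --- valid because $W^0_\delta$ is normal, by \cite[Lems.~4.3.12 and 4.3.14]{Voganbook} --- the commuting algebra is then spanned by a set of $A_w$ with $w$ ranging over coset representatives for $R_\delta$, and after the standard renormalization it is the group algebra $\C[R_\delta]$. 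Since $R_\delta$ is abelian, $\C[R_\delta]\cong\C^{|R_\delta|}$, and hence $X^L(\delta)$ is a direct sum of $|R_\delta|$ pairwise inequivalent irreducible subrepresentations. This is \cite[Cor.~4.4.11]{Voganbook}.

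For $X^G(\mathfrak{q},H,\delta)$ I would invoke the Vogan--Zuckerman theory: $X^G(\mathfrak{q},H,\delta)$ is obtained from $X^L(\delta)$ by applying the cohomological induction functor $\mathcal{R}_{\mathfrak{q}}$ in the top relevant degree $\dim(\mathfrak{u}\cap\mathfrak{k})$. Condition (v) of Definition~\ref{def-theta-stable-data} --- strict positivity of $\lambda^G=\lambda^L+\rho(\mathfrak{u},\mathfrak{t})$ on the weights of $\mathfrak{t}$ on $\mathfrak{u}$ --- places us in the range in which $\mathcal{R}_{\mathfrak{q}}$ sends an irreducible unitary (here tempered) $(\mathfrak{l},L\cap K)$-module to an irreducible unitary (tempered) $(\mathfrak{g},K)$-module, vanishes in the other degrees, is additive, and is injective on equivalence classes of such modules (the input is recovered by a bottom-layer / minimal-$K$-type argument). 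Applying $\mathcal{R}_{\mathfrak{q}}$ to the decomposition $X^L(\delta)=\bigoplus_{j=1}^{|R_\delta|}Y_j$ therefore produces $X^G(\mathfrak{q},H,\delta)=\bigoplus_{j=1}^{|R_\delta|}\mathcal{R}_{\mathfrak{q}}Y_j$ with the summands irreducible and pairwise inequivalent; this is \cite[Cor.~6.5.14]{Voganbook}. The real content --- and the main obstacle, were one to prove this from scratch --- lies not in the bookkeeping but in these two cited inputs: the Knapp--Stein $R$-group analysis of the commuting algebra of the minimal principal series, and the exactness, irreducibility-preservation, and parameter-injectivity of cohomological induction in the relevant range. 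Both are substantial theorems of Vogan, so in the paper I would simply quote \cite[Cor.~4.4.11 and Cor.~6.5.14]{Voganbook}, the only remaining task being to match $X^L(\delta)$ and $X^G(\mathfrak{q},H,\delta)$ with the representations occurring there.
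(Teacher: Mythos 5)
Your proposal is correct and ends up exactly where the paper does: the paper offers no independent argument for this theorem, simply quoting Vogan's Cor.~4.4.11 (the $R$-group analysis of the commuting algebra of $X^L(\delta)$) and Cor.~6.5.14 (transport of the decomposition through cohomological induction in the good range guaranteed by part (v) of Definition~\ref{def-theta-stable-data}). Your sketch of what lies behind those two citations is consistent with their content, so the approach is essentially the same.
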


\begin{corollary}
\label{cor-r-delta-equals-r-sigma}
If  $(\kq, H, \delta)$ is a set of  Vogan data, and if    $[P, \sigma]$ is the corresponding associate class, as in Theorem \textup{\ref{thm-vogan-zuckerman1}}, then   
$|  R_\sigma| = | R_\delta | $. \qed
\end{corollary}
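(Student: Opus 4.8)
The plan is to compare two multiplicity-free direct-sum decompositions of one and the same representation of $G$. First I would apply Theorem~\ref{thm-vogan-zuckerman1} with $\varphi = 0$: since $[\mathfrak{q},H,\delta]$ corresponds to $[P,\sigma]$, this identifies the $(P,\sigma)$-principal series representation $\pi_{\sigma,0}$ on $\Ind_P^G \Hilbert_\sigma$ with the representation $X^G(\mathfrak{q}, H ,\delta)$ attached to the Vogan data. Thus $\pi_{\sigma,0}$ and $X^G(\mathfrak{q}, H ,\delta)$ are unitarily equivalent, so any isomorphism invariant of one equals that of the other.

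Next I would count irreducible summands on each side. On the Vogan–Zuckerman side, Theorem~\ref{thm-cohomological-induction-preserves-irreducibility} states directly that $X^G(\mathfrak{q}, H ,\delta)$ is a direct sum of exactly $|R_\delta|$ pairwise inequivalent irreducible subrepresentations. On the Harish-Chandra/Knapp–Stein side, the discussion recalled in Section~\ref{sec-intertwining-groups} shows that the intertwining algebra of $\pi_{\sigma,0}$ is isomorphic to the group algebra $\C[R_\sigma]$ of the abelian $R$-group; since $\C[R_\sigma] \cong \C^{|R_\sigma|}$ as an algebra, $\pi_{\sigma,0}$ is a direct sum of exactly $|\widehat R_\sigma| = |R_\sigma|$ pairwise inequivalent irreducible subrepresentations. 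The number of summands in a multiplicity-free decomposition of a finite-length representation is nothing but the number of distinct irreducible constituents, which is an isomorphism invariant; applying this to the single representation $\pi_{\sigma,0} \cong X^G(\mathfrak{q}, H ,\delta)$ yields $|R_\sigma| = |R_\delta|$.

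There is no serious obstacle here; the one point that deserves a word of care is the well-definedness of the count, namely that both decompositions really are multiplicity-free (this is part of the content of Theorem~\ref{thm-cohomological-induction-preserves-irreducibility} on the Vogan side and of the Knapp–Stein theorem as quoted on the Harish-Chandra side), so that the two numbers genuinely compute the same invariant of the common representation. One could instead attempt a more structural argument, combining Lemma~\ref{lem-w-sigma-equals-w-delta} (which gives $W_\sigma = W_\delta$) with an identification of the subgroups $W'_\sigma$ and $W^0_\delta$ that act by scalars; but such an identification is not established in the material above, so the counting argument is the cleaner route.
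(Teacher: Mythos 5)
Your proposal is correct and follows essentially the same route as the paper: identify $\pi_{\sigma,0}$ with $X^G(\mathfrak{q},H,\delta)$ via Theorem~\ref{thm-vogan-zuckerman1}, then count the irreducible summands, which equal $|R_\delta|$ by Theorem~\ref{thm-cohomological-induction-preserves-irreducibility} and $|R_\sigma|$ by the Knapp--Stein theory recalled in Section~\ref{sec-intertwining-groups}. Your extra remark on multiplicity-freeness is a fair point of care, but it is already built into the cited results, exactly as the paper's brief proof assumes.
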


\begin{proof}
According to the theorem above, and what we have noted about the Knapp-Stein theory, both $|R_\delta|$ and $|R_\sigma|$ equal the number of irreducible summands of the representation $X^G(\mathfrak{q},H,\sigma)\cong \Ind_P^G \Hilbert_{\sigma}$. 
\end{proof}


\section{Essential Components of the Tempered Dual}
\label{sec-essential-components}

The following is Definition~4.1 in \cite{ConnesKasparovPaper1}. Its significance is that the essential components of the tempered dual of $G$   are precisely those that contribute to $C^*$-algebra $K$-theory; see \cite[Thm.~4.9]{ConnesKasparovPaper1}.

\begin{definition}
\label{def-associate-class-essential}
A component of the tempered dual of $G$ corresponding to the associate class $[P,\sigma]$ is \emph{essential} if the Knapp-Stein subgroup $W'_\sigma\triangleleft W_\sigma$ is trivial.
\end{definition}

\begin{lemma}
A $K$-conjugacy class $[\kq, H, \delta]$ corresponds to an essential component of the tempered dual, as in Section~\textup{\ref{sec-components-of-tempered-dual}}, if and only if  $W^0_\delta$ is the trivial subgroup of $W_\delta$.
\end{lemma}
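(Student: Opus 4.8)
The plan is to translate the defining condition on both sides into the language of intertwining groups and then match them. Recall that by Definition~\ref{def-associate-class-essential}, a component corresponding to $[P,\sigma]$ is essential if and only if $W'_\sigma$ is trivial, and that by the Knapp-Stein decomposition \eqref{eq-knapp-stein-semidirect-product} we have $W_\sigma = W'_\sigma \rtimes R_\sigma$. So $W'_\sigma$ is trivial exactly when the quotient map $W_\sigma \to R_\sigma$ is an isomorphism, i.e.\ when $|W_\sigma| = |R_\sigma|$. On the Vogan side, the claim is that $W^0_\delta$ is trivial, which by the definition $R_\delta = W_\delta/W^0_\delta$ is equivalent to $|W_\delta| = |R_\delta|$. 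So it suffices to show $|W_\sigma| = |R_\sigma|$ if and only if $|W_\delta| = |R_\delta|$.

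The key ingredients are already assembled in the excerpt: Lemma~\ref{lem-w-sigma-equals-w-delta} gives $W_\sigma = W_\delta$ (as groups of linear automorphisms of $\mathfrak{a}_0$, hence in particular $|W_\sigma| = |W_\delta|$), and Corollary~\ref{cor-r-delta-equals-r-sigma} gives $|R_\sigma| = |R_\delta|$. Combining these two identifications, the equality $|W_\sigma| = |R_\sigma|$ holds precisely when $|W_\delta| = |R_\delta|$, and the lemma follows. First I would state that $[P,\sigma]$ is the associate class corresponding to $[\mathfrak{q},H,\delta]$ under the bijection of Section~\ref{sec-components-of-tempered-dual}, so that the hypotheses of Lemma~\ref{lem-w-sigma-equals-w-delta} and Corollary~\ref{cor-r-delta-equals-r-sigma} are in force; then I would chain the two equalities together with the two "is trivial iff the index equals the order" reformulations.

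I do not expect any serious obstacle here — the lemma is essentially a bookkeeping consequence of results already proved. The only point that needs a word of care is the logical structure of the "if and only if": one must observe that $W^0_\delta \subseteq W_\delta$ is a subgroup (normality is not needed for the cardinality argument, though it is recorded in the excerpt) and similarly $W'_\sigma \subseteq W_\sigma$, so that "trivial" is genuinely equivalent to "the ambient group has the same order as the quotient." Once that is noted, the proof is a two-line deduction from Lemma~\ref{lem-w-sigma-equals-w-delta} and Corollary~\ref{cor-r-delta-equals-r-sigma}.
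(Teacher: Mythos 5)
Your proposal is correct and follows essentially the same route as the paper: the paper's proof likewise combines Lemma~\ref{lem-w-sigma-equals-w-delta} ($W_\sigma=W_\delta$) with Corollary~\ref{cor-r-delta-equals-r-sigma} ($|R_\sigma|=|R_\delta|$) to get $|W'_\sigma|=1 \Leftrightarrow |W^0_\delta|=1$, and then invokes Definition~\ref{def-associate-class-essential}. Your version merely makes the cardinality bookkeeping explicit, which the paper leaves implicit.
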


\begin{proof}
Suppose that a component of the tempered dual corresponds to an associate class $[P,\sigma]$. It follows from Lemma \ref{lem-w-sigma-equals-w-delta} and Corollary~\ref{cor-r-delta-equals-r-sigma} that 
\[
| W'_\sigma| = 1 \quad \Leftrightarrow \quad |W^0_\delta| = 1
\]
The lemma now follows directly from the definition above.
\end{proof}

\begin{theorem}
\label{thm-essential-components-correspond-to-essential-data}
The component of the tempered dual that is labeled by the set of Vogan data $(\mathfrak{q},H,\delta)$ is essential if and only if $(\mathfrak{q},H,\delta)$ is  essential in the sense of Definition~\textup{\ref{def-essential-vogan-data}}.
\end{theorem}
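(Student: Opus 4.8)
The theorem asserts that the component labelled by $(\mathfrak{q},H,\delta)$ is essential precisely when $(\mathfrak{q},H,\delta)$ is essential in the sense of Definition~\ref{def-essential-vogan-data}. By the Lemma immediately preceding the theorem, the component is essential if and only if $W^0_\delta$ is trivial, so the whole thing reduces to the purely structural assertion
\[
W^0_\delta = \{1\} \quad\Longleftrightarrow\quad (\mathfrak{q},H,\delta)\ \text{is essential}.
\]
I would prove the two implications separately, but in both directions the key is to analyze the reduced roots $\overline\Delta(\mathfrak{l},\mathfrak{a})$ and the good roots $\Delta_\delta(\mathfrak{l},\mathfrak{a})$ that generate $W^0_\delta$ (Definitions~\ref{def good root}, \ref{def-good-roots}, \ref{def-w-0-group}).

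\textbf{Direction 1: essential $\Rightarrow W^0_\delta=\{1\}$.} Suppose $(\mathfrak{q},H,\delta)$ is essential, so $\mathfrak{l}_0\cong\mathfrak{sl}(2,\R)\oplus\cdots\oplus\mathfrak{sl}(2,\R)\oplus\mathfrak{z}_0$ with the properties of Proposition~\ref{prop-strictly-dominant-weight}, and $\delta$ takes the value $-1$ on each of the elements $m=\Phi\left(\left[\begin{smallmatrix}-1&0\\0&-1\end{smallmatrix}\right]\right)$ coming from the $\mathfrak{sl}(2,\R)$-factors. First I would identify $\mathfrak{a}_0$: since $T^c$ is the maximal compact of $L$ and $\mathfrak{t}_0$ is the $\theta$-fixed part of $\mathfrak{z}_0$ (as established in the remark after Proposition~\ref{prop-theta-stable-data-from-dominant-weight}), the split part $\mathfrak{a}_0=\mathfrak{h}_0\cap\mathfrak{s}_0$ is spanned by one split Cartan direction from each $\mathfrak{sl}(2,\R)$-summand. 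Hence $\overline\Delta(\mathfrak{l},\mathfrak{a})$ consists of one pair $\pm\alpha_j$ for each $\mathfrak{sl}(2,\R)$-summand, each such $\alpha_j$ is a real root (it restricts from the $\mathfrak{sl}(2,\R)$-root, which vanishes on $\mathfrak{t}$ since $\mathfrak{t}_0$ lives in $\mathfrak{z}_0$ and the $\mathfrak{sl}(2,\R)$-summands are ideals), and the associated element $m_{\alpha_j}$ of Definition~\ref{def-good-roots} is exactly the $m$ from the $j$'th factor. There are no complex reduced roots, because every reduced root comes from an $\mathfrak{sl}(2,\R)$ ideal and is therefore real. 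So a reduced root $\alpha_j$ is good iff $\delta(m_{\alpha_j})=1$; but essentiality says $\delta(m_{\alpha_j})=-1$ for every $j$. Hence there are no good roots at all, and $W^0_\delta$, being generated by reflections in good roots, is trivial.

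\textbf{Direction 2: $W^0_\delta=\{1\}\Rightarrow$ essential.} For the converse I would argue contrapositively: if $(\mathfrak{q},H,\delta)$ is not essential, I produce a nontrivial good root. There are two ways essentiality can fail. (a) Property (i) fails, i.e. $\mathfrak{l}_0$ is not of the form $\bigoplus\mathfrak{sl}(2,\R)\oplus\mathfrak{z}_0$ with Proposition~\ref{prop-strictly-dominant-weight}'s properties. By Proposition~\ref{prop-strictly-dominant-weight} this decomposition \emph{does} hold whenever $\mathfrak{q}$ is defined by a strictly dominant weight; and every set of Vogan data arises (up to $K$-conjugacy, as in the proof of Theorem~\ref{thm-classification-of-essential-discrete-data}) with $\lambda^G=\lambda^L+\rho(\mathfrak{u},\mathfrak{t})$ extended by zero being strictly dominant and defining $\mathfrak{q}$, so property (i) is in fact automatic for any Vogan data — meaning the only genuine way to fail essentiality is (b): property (i) holds but $\delta(m_{\alpha_j})\neq -1$, i.e. $\delta(m_{\alpha_j})=1$, for some $\mathfrak{sl}(2,\R)$-summand (the only values are $\pm1$ since $m_{\alpha_j}^2=1$). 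As in Direction~1, $\alpha_j$ is then a real reduced root with $\delta(m_{\alpha_j})=1$, hence a good root, so the reflection in $\alpha_j$ is a nontrivial element of $W^0_\delta$, and $W^0_\delta\neq\{1\}$.

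\textbf{Main obstacle.} The one point requiring genuine care is the claim in Direction~2 that property~(i) of Definition~\ref{def-essential-vogan-data} is automatic for \emph{every} set of Vogan data, not just those from Proposition~\ref{prop-theta-stable-data-from-dominant-weight}; this is what makes the two failure modes collapse to one. I expect to justify it exactly as in the proof of Theorem~\ref{thm-classification-of-essential-discrete-data}: given arbitrary Vogan data, form $\lambda^G=\lambda^L+\rho(\mathfrak{u},\mathfrak{t})$, extend by zero to $\mathfrak{t}^c$, use part~(v) of Definition~\ref{def-theta-stable-data} (positivity on $\Delta(\mathfrak{u},\mathfrak{t}^c)$) together with the vanishing on $\mathfrak{t}_0^\perp$ to see that $\lambda^G$ is strictly dominant and defines $\mathfrak{q}$, and then invoke Proposition~\ref{prop-strictly-dominant-weight}. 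The remaining identifications — that $\mathfrak{a}_0$ meets each $\mathfrak{sl}(2,\R)$-summand in a line and nothing else, that each reduced root is real, and that $m_{\alpha_j}$ coincides with the distinguished element $m$ of that summand — are the routine $\mathfrak{sl}(2,\R)$ bookkeeping already carried out in Section~\ref{sec-discrete-theta-stable-data} and flagged in the sentence "in the case of an essential set of Vogan data, all these structures match those introduced in Section~\ref{sec-discrete-theta-stable-data}."
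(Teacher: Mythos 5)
Your Direction 1 is correct and is exactly the easy implication, which the paper dispatches as immediate from the definitions. The problem is Direction 2, and specifically the claim you yourself flagged as the main obstacle: that condition (i) of Definition~\ref{def-essential-vogan-data} is automatic for \emph{every} set of Vogan data. This is false. For general Vogan data the weight $\lambda^G=\lambda^L+\rho(\mathfrak{u},\mathfrak{t})$, extended by zero to $i\mathfrak{t}^{c*}_0$, pairs to zero with every root of $\mathfrak{t}^c$ on $\mathfrak{l}$ --- in particular with every \emph{compact} root lying in $\mathfrak{l}$ --- so it is not strictly dominant unless $\mathfrak{k}\cap\mathfrak{l}=\mathfrak{t}^c$, which is precisely what is at stake and not something you may assume. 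Proposition~\ref{prop-strictly-dominant-weight} therefore does not apply. A concrete counterexample: take $G=SL(3,\R)$, $\mathfrak{q}=\mathfrak{g}$ (so $\mathfrak{u}=0$, $L=G$, which is split hence quasi-split), $H$ the diagonal Cartan and $\delta$ any fine character of $T\cong(\Z/2)^2$; this is a perfectly good set of Vogan data with $\mathfrak{l}_0=\mathfrak{sl}(3,\R)$, which is not a sum of copies of $\mathfrak{sl}(2,\R)$ and an abelian algebra. So your two "failure modes" do not collapse to one, and the contrapositive argument breaks down: when (i) fails you have produced no good root.

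What the paper actually does in this direction is derive condition (i) as a \emph{consequence} of the triviality of $W^0_\delta$, and this is where the work is. Triviality of $W^0_\delta$ forces every reduced root in $\overline\Delta(\mathfrak{l},\mathfrak{a})$ to be real with $\delta(m_\alpha)=-1$. One then shows the reduced root system must be of type $A_1\times\cdots\times A_1$: in any rank-two subsystem of type $A_2$, $B_2$ or $G_2$ there are roots with $\check\gamma_1=\check\gamma_2+\check\gamma_3$, and Vogan's multiplicativity result (\cite[Cor.~4.3.20]{Voganbook}) gives $\delta(m_{\gamma_1})=\delta(m_{\gamma_2})\delta(m_{\gamma_3})$, which is incompatible with all values being $-1$. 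Finally, an $\mathfrak{sl}(2,\C)$-module argument shows that for each reduced root $\alpha$ the spaces $\mathfrak{l}_{\pm 2\alpha}\oplus\mathfrak{l}_{\pm\alpha}\oplus[\mathfrak{l}_\alpha,\mathfrak{l}_{-\alpha}]$ reduce to the complexification of $\phi_\alpha[\mathfrak{sl}(2,\R)]$, and orthogonality and commutation for distinct reduced roots then yield $\mathfrak{l}_0\cong\mathfrak{sl}(2,\R)\oplus\cdots\oplus\mathfrak{sl}(2,\R)\oplus\mathfrak{z}_0$. None of this structural input appears in your proposal, so the converse direction as written has a genuine gap rather than a routine bookkeeping step.
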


This is a special case of   \cite[Lem.~4.3.31]{Voganbook}.  But since our result is somewhat hidden there, we shall give some of the details here.

\begin{proof}
If $(\mathfrak{q},H,\delta)$ is essential, then it is immediate from Definitions~\ref{def-essential-vogan-data}, \ref{def-good-roots} and \ref{def-w-0-group} that $W^0_\delta$ is trivial.
So suppose, conversely, that $W_\delta^0$ is trivial, so that, according to  Definitions~\ref{def-good-roots} and \ref{def-w-0-group}, 
 every root  in $\overline\Delta(\mathfrak{l},\mathfrak{a})$ is real, and moreover
 \begin{equation*}
 \delta (m_\alpha) \ne 1 \qquad \forall \alpha \in \overline \Delta(\mathfrak{l},\mathfrak{a}),
 \end{equation*}
 which implies that 
  \begin{equation}
     \label{eq-delta-of-m-is-minus-one}
 \delta (m_\alpha) = -1  \qquad \forall \alpha \in \overline \Delta(\mathfrak{l},\mathfrak{a}),
 \end{equation} 
 since $m_\alpha^2=e$. Write $L =N_G(\mathfrak{q})$ and  $H= TA$, as usual. 
 
 Our  first task is to show that the reduced root system $\overline\Delta(\mathfrak{l},\mathfrak{a})$ matches one of those obtained from  an essential $(\mathfrak{q},H,\delta)$. These are the systems  of type $A_1 \times \cdots \times A_1$. 

Now $\overline\Delta(\mathfrak{l},\mathfrak{a})$ is of type $A_1 \times \cdots \times A_1$ if and only if for   every   $\alpha, \beta \in \overline \Delta (\mathfrak{l}, \mathfrak{a})$ with  $\alpha \ne \pm \beta$ the roots in the plane spanned by $\alpha$ and $\beta$  span a system of type $A_1 \times A_1$.  But  apart from $A_1 \times A_1$, the only other possibilities are $A_2$, $B_2$ and $G_2$, and in all these cases there exist roots $\gamma_1$, $\gamma_2$, $\gamma_3$ for which the dual roots (which also constitute a root system of rank $2$) satisfy
\begin{equation}
    \label{eq-coroot-identity}
\check \gamma_1 = \check \gamma_2 + \check \gamma_3.
\end{equation}
But it is shown in \cite[Cor.~4.3.20]{Voganbook}, again by considering the various cases, that \eqref{eq-coroot-identity} implies
\[
\delta(m_{\gamma_1}) = \delta(m_{\gamma_2})\delta (m_{\gamma_3}),
\]
and this is obviously inconsistent with \eqref{eq-delta-of-m-is-minus-one}.
 
Now let $\alpha\in \overline \Delta(\mathfrak{l},\mathfrak{a})$. The root $\alpha$ is real, and is therefore  the restriction to $\mathfrak{a}\subseteq \mathfrak{h}$ of a root $\gamma\in \Delta (\mathfrak{l},\mathfrak{h})$ that vanishes on $\mathfrak{t}\subseteq \mathfrak{h}$. Standard arguments in Lie theory (see again \cite[Sec.~VI.2]{Serre} for instance) show that the spaces $\mathfrak{l}_\gamma $, $\mathfrak{l}_{-\gamma}$ and $[\mathfrak{l}_{\gamma}, \mathfrak{l}_{-\gamma}]$ are each one-dimensional and  span a copy of $\mathfrak{sl}(2,\C)\subseteq \mathfrak{l}$. The reality condition implies that $[\mathfrak{l}_{\gamma}, \mathfrak{l}_{-\gamma}]$ is included in $\mathfrak{a}\subseteq \mathfrak{h}$.  In fact if $H_\gamma\in \mathfrak{a}$ is chosen so that $\langle H,H_\gamma\rangle = \gamma (H)$ for all $H\in \mathfrak{h}$, then $[\mathfrak{l}_{\gamma}, \mathfrak{l}_{-\gamma}] = \C\cdot  H_\gamma$.

Now form the vector subspace   
\begin{equation}
    \label{eq-direct-sum-of-reduced-and-nonreduced-root-spaces}
\mathfrak{l}_{-2 \alpha} \oplus \mathfrak{l}_{- \alpha} \oplus [\mathfrak{l}_{\alpha},\mathfrak{l}_{- \alpha} ] \oplus \mathfrak{l}_{\alpha} \oplus \mathfrak{l}_{ 2\alpha} 
\end{equation}
within $\mathfrak{l}$. This includes our copy of $\mathfrak{sl}(2,\C)$ since $\mathfrak{l}_{\pm \gamma} \subseteq \mathfrak{l}_{\pm \alpha}$, and so the direct sum  is a module over $\mathfrak{sl}(2,\C)$. Because $H_\gamma \in \mathfrak{a}$ and because $\gamma \vert _{\mathfrak{a}} = \alpha$, the individual summands are the weight spaces for this module action. It therefore follows from the representation theory of $\mathfrak{sl}(2,\C)$ that if $X \in \mathfrak{l}_{\gamma}$ is nonzero, then the morphism
\[
\ad _X \colon \mathfrak{l}_{- \alpha} \longrightarrow  [\mathfrak{l}_{\alpha},\mathfrak{l}_{-\alpha} ]
\]
is injective.  But if $Y\in \mathfrak{l}_{-\alpha}$ and $H \in \mathfrak{h}$, then 
\[
\langle [X,Y], H\rangle = \langle Y, [H,X]\rangle = \langle Y,\gamma (H) X\rangle  
= \langle Y,X\rangle \langle  H_\gamma , H\rangle .
\]
As a result, $\ad_X (Y) \in \C\cdot  H_\gamma$, and therefore  $\operatorname{dim} ( \mathfrak{l}_{-\alpha}) {= }1$, which  means that $\mathfrak{l}_{-\alpha} = \mathfrak{l}_{-\gamma}$.  Similarly $\mathfrak{l}_{\alpha} = \mathfrak{l}_{\gamma}$. It  follows that the middle summand in \eqref{eq-direct-sum-of-reduced-and-nonreduced-root-spaces}, which is the weight zero space for the $\mathfrak{sl}(2,\C)$-representation, is one-dimensional, and so it follows from the representation theory of $\mathfrak{sl}(2,\C)$ that 
\[
\mathfrak{l}_{-2 \alpha} \oplus \mathfrak{l}_{- \alpha} \oplus [\mathfrak{l}_{\alpha},\mathfrak{l}_{- \alpha} ] \oplus \mathfrak{l}_{\alpha} \oplus \mathfrak{l}_{ 2\alpha} 
= \mathfrak{l}_{-\gamma} \oplus \C{\cdot}  H_\gamma \oplus \mathfrak{l}_{\gamma}.
\]
As a result,   the above is the complexification of $\phi_{\alpha}[\mathfrak{sl}(2,\R)]\subseteq \mathfrak{l}_0$.

Now if $\alpha\ne \pm \beta$, then the subalgebras $\phi_{\alpha}[\mathfrak{sl}(2,\R)]$ and   $\phi_{\beta}[\mathfrak{sl}(2,\R)]$ commute with one another and  are orthogonal to one another, so it follows from the above that   $\mathfrak{l}_0$ is a direct sum 
\[
\mathfrak{l}_0 
= 
\phi_{\alpha_1}[\mathfrak{sl}(2,\R)]  \oplus \cdots \oplus  \phi_{\alpha_N}[\mathfrak{sl}(2,\R)] \oplus \mathfrak{z_0} ,
\]
where the direct sum is over an enumeration of a system of positive roots, and this together with \eqref{eq-delta-of-m-is-minus-one} shows that $(\mathfrak{q},H,\delta)$ is essential.
\end{proof}

\section{Dirac Cohomology} 
\label{sec-dirac-cohomology}

In \cite[Sec.~5]{ConnesKasparovPaper1} we approached the Dirac operator  from a direction congenial to $K$-theory, but in representation theory  it is convenient to follow a different but equivalent approach, which we shall review here. For full details about the Dirac operator in representation theory, see the monograph \cite{HuangPandzic}.

\subsection*{The algebraic Dirac operator}
Pick an irreducible representation of associative algebras 
\[
\operatorname{Cliff}(\mathfrak{s}_0) \longrightarrow \End (\mathcal{S}).
\]
When $\dim(\mathfrak{s}_0)$ is even there is a unique irreducible representation, up to equivalence.  When $\dim(\mathfrak{s}_0)$ is odd there are two, but the main construction below (of  Dirac cohomology) is independent of the choice.

  \begin{definition}
\label{def-vogan-dirac-operator}
The \emph{Dirac operator} associated to an admissible (for instance irreducible)  unitary $(\mathfrak{g},K)$-module $X$ is the  linear operator
   \[ 
 \slashed{D}  \colon X \otimes   \mathcal{S} \longrightarrow  X \otimes \mathcal{S}   
 \]
 given by the formula 
 \[
 \slashed{D} = \sum X_a \otimes c(X_a),
 \]
 in which  the sum is over an orthonormal basis for $\mathfrak{s}_0$.
See  \cite{HuangPandzic}.  The \emph{Dirac cohomology} of $X$ is 
\[
\Dirac(X) = \operatorname{Kernel} \bigl (\slashed{D}\colon  X \otimes    \mathcal{S}  \longrightarrow  X \otimes  \mathcal{S}  \bigr  ).
\]
Occasionally we shall write $\Dirac^G(X)$, if it is helpful to emphasize the group involved.
 \end{definition}

The Dirac cohomology of $X$  is a finite-dimensional vector space.  It is also a representation of the \emph{spin double cover} of $K$, which is the 
  (not necessarily connected) double    cover $\tilde K$ of $K$ defined by the pullback diagram
\begin{equation}
\label{eq-spin-double-cover}
\xymatrix{
\tilde K \ar[r]\ar[d] &  \operatorname{Spin} (\mathfrak{s}_0)\ar[d]  \\
K\ar[r] & SO(\mathfrak{s}) . \\
}
\end{equation}
This follows from the existence of a natural Lie algebra homomorphism
\[
\mathfrak{k}_0 \longrightarrow \operatorname{Cliff} (\mathfrak{s}_0),
\]
see \cite[Sec.~2.3]{HuangPandzic}, which allows us to equip $X \otimes \mathcal{S}$ with a diagonal $\mathfrak{k}_0$-action that exponentiates to a $\tilde K$-action, with respect to which the Dirac operator is equivariant.

In \cite{ConnesKasparovPaper1}, in place of the irreducible $\operatorname{Cliff}(\mathfrak{s}_0)$-module $\mathcal{S}$,   we used  modules  of the form $ \mathcal{S}\otimes V_\pi^*$, where $\pi\colon \tilde K \to \Aut(V_\pi)$ is an irreducible genuine representation of $\tilde K$, and where by \emph{genuine} we mean that the representation is nontrivial on the kernel of $\tilde K \to K$. We formed the Dirac operator \[
\slashed{D}_{\mathcal{S}\otimes V_\pi^*}\colon [X \otimes \mathcal{S}\otimes V_\pi^*]^K \longrightarrow 
[X \otimes \mathcal{S}\otimes V_\pi^*]^K
\]
(the diagonal action of $\tilde K$ descends to $K$) and studied its kernel.  This approach is related to the definition of  Dirac cohomology by the formula
\begin{equation}
    \label{eq-relation-between-two-diracs}
\dim \bigl (\operatorname{kernel}   (   \slashed{D}_{\mathcal{S}\otimes V_\pi^*}   ) \bigr )
= 
\text{multiplicity of $\pi$ in  $\Dirac(X)$} .
\end{equation}
In what follows we shall use   \eqref{eq-relation-between-two-diracs}  to translate statements from the paper  \cite{ConnesKasparovPaper1} about $\operatorname{kernel} (\slashed{D}_{\mathcal{S}\otimes V_\pi^*})$ into statements about $\Dirac(X)$.
 
\subsection*{The matching theorem}
It follows from \cite[Thm.~5.27]{ConnesKasparovPaper1} that 
\[
\operatorname{kernel} (\slashed{D}_{\mathcal{S}\otimes V_\pi^*}) \ne 0
\quad \Leftrightarrow \quad [X \otimes \mathcal{S}\otimes V_\pi^*]^K \ne 0 \quad \& \quad \slashed{D}_{\mathcal{S}\otimes V_\pi^*} =0.
\]
The following is therefore the translation, using \eqref{eq-relation-between-two-diracs} and Theorem~\ref{thm-vogan-zuckerman1}, of Definition~6.1 in \cite{ConnesKasparovPaper1}. 
 
  \begin{definition}
A genuine, irreducible representation $\pi$ of $\tilde K$  and a component $[\mathfrak{q},H,\delta]$ of the tempered dual are \emph{matched} if $\pi$ occurs in the Dirac cohomology space   $\Dirac (X^G(\mathfrak{q}, H,\delta))$.
 \end{definition}

The following is the translation of   Theorem~6.3 in \cite{ConnesKasparovPaper1}:

\begin{theorem}[Matching Theorem]
\label{thm-matching-theorem}
Let $G$ be a connected linear real reductive group. Let $K$ be a maximal compact subgroup of $G$, and let $\tilde K$ be its spin double cover, as in~\eqref{eq-spin-double-cover}.
\begin{enumerate}[\rm (i)]

\item For every essential component of the tempered dual of $G$ there is a unique genuine, irreducible representation of $\tilde K$ to which it is matched.

\item For every genuine, irreducible representation of $\tilde K$  there is a unique essential component of the tempered dual of $G$ which it is matched.
\end{enumerate}
\end{theorem}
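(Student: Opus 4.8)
The plan is to show that the essential components of the tempered dual and the genuine irreducible representations of $\widetilde K$ are parametrized by one and the same set of weights, and that the Dirac cohomology functor realizes the resulting bijection. On the tempered side, Theorem~\ref{thm-essential-components-correspond-to-essential-data} identifies the essential components with the $K$-conjugacy classes of \emph{essential} sets of Vogan data, and Theorem~\ref{thm-classification-of-essential-discrete-data} identifies those, via Proposition~\ref{prop-theta-stable-data-from-dominant-weight}, with the dominant weights $\kappa\in i\mathfrak{t}^{c*}_0$ for which $\mu = \kappa - \rho(\mathfrak{s}\cap\mathfrak{u},\mathfrak{t}^c) - \rho(\Delta^+(\mathfrak{l},\mathfrak{t}^c))$ is analytically integral; by Remark~\ref{rem-weights-from-u-and-l-give-positive-system-for-g} this integrality condition says precisely that $\kappa - \rho(\Delta^+(\mathfrak{s},\mathfrak{t}^c))$ is analytically integral for $T^c$, that is, that $\kappa$ is analytically integral for $\widetilde K$. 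On the spin side, the highest-weight description of the genuine irreducible representations of $\widetilde K$ set up earlier in this section (Proposition~\ref{prop-genuine-reps-of-k-tilde-by-highest-weight} and the ensuing discussion) parametrizes them by exactly the same set, namely the dominant weights $\kappa$ that are analytically integral for $\widetilde K$; write $V_\kappa$ for the one with highest weight $\kappa$. Composing these two parametrizations produces a bijection between essential components and genuine irreducible $\widetilde K$-representations; what remains is to check that it coincides with the matching relation, that is, that $\Dirac\bigl(X^G(\mathfrak{q},H,\delta)\bigr)$ is a nonzero multiple of $V_\kappa$ and contains no other irreducible constituent when $(\mathfrak{q},H,\delta)$ is the essential Vogan data attached to $\kappa$.

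To carry out that computation I would use the special structure of essential Vogan data from Definition~\ref{def-essential-vogan-data} and Proposition~\ref{prop-strictly-dominant-weight}: the group $L$ is, up to a finite cover, $SL(2,\mathbb R)\times\cdots\times SL(2,\mathbb R)\times Z(L)^0$, and $\delta$ restricts to the sign character on each $SL(2,\mathbb R)$-factor. Hence the principal series $X^L(\delta)$ is parabolically induced from the sign character of a split Cartan, so it decomposes as a direct sum of $2^N$ limits of discrete series of $L$ (an outer tensor product of limits of discrete series of the $SL(2,\mathbb R)$-factors, tensored with a character of $Z(L)^0$); each summand has \emph{one-dimensional} Dirac cohomology, by the elementary computation for $SL(2,\mathbb R)$ in \cite{HuangPandzic}, and all $2^N$ summands have the \emph{same} Dirac cohomology as $\widetilde{T^c}$-modules. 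Feeding this into the comparison between the Dirac operator and cohomological induction proved in \cite{DongHuang15} --- morally Kostant's factorization of the cubic Dirac operator of $G$ into the Dirac operator of $L$ and a relative factor for $(\mathfrak{g},\mathfrak{l})$, whose Dirac cohomology is one-dimensional --- shows that $\Dirac\bigl(X^G(\mathfrak{q},H,\delta)\bigr)$ is a nonzero multiple of a single irreducible genuine $\widetilde K$-representation, and that the $\rho(\mathfrak{u})$-shift inherent in cohomological induction carries the highest weight of $\Dirac^L\bigl(X^L(\delta)\bigr)$ to $\kappa$. This last point is exactly the bookkeeping identity
\[
\kappa + \rho(\Delta^+(\mathfrak{k},\mathfrak{t}^c)) = \mu^L + \rho(\Delta^+(\mathfrak{l},\mathfrak{t}^c)) + \rho(\mathfrak{u},\mathfrak{t}^c) = \lambda^G
\]
established in the proof of Theorem~\ref{thm-classification-of-essential-discrete-data}, combined with the Huang--Pand\v{z}i\'c description \cite{HuangPandzic} of the highest weights occurring in Dirac cohomology in terms of the infinitesimal character $\lambda^G$ together with the fact that $\kappa$ is dominant. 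Thus $\Dirac\bigl(X^G(\mathfrak{q},H,\delta)\bigr) = m_\kappa V_\kappa$ with $m_\kappa \ge 1$.

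Both halves of the theorem then follow formally. For (i): the essential component labelled by $\kappa$ is matched to $V_\kappa$, and to no other genuine irreducible representation, since $V_\kappa$ is the only irreducible constituent of $\Dirac\bigl(X^G(\mathfrak{q},H,\delta)\bigr)$. For (ii): a genuine irreducible $\widetilde K$-representation is $V_\kappa$ for a unique dominant weight $\kappa$ that is analytically integral for $\widetilde K$; this $\kappa$ labels a unique essential component, which by the computation above is matched to $V_\kappa$; and if an essential component with parameter $\kappa'$ were also matched to $V_\kappa$, then $V_\kappa$ would be a constituent of $m_{\kappa'}V_{\kappa'}$, forcing $\kappa' = \kappa$, so the essential component is unique.

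The main obstacle is the Dirac-cohomology computation of the second paragraph. The input module $X^L(\delta)$ is a \emph{reducible} unitary principal series assembled from \emph{limits} of discrete series of the $SL(2,\mathbb R)$-factors, so the parabolic subalgebra $\mathfrak{q}$ places it only in the weakly good range for cohomological induction; one must verify that the comparison theorem of \cite{DongHuang15} applies in this generality, that Dirac cohomology is compatible with the direct sum decomposition into the $2^N$ cohomologically induced summands (including the bookkeeping of the spin module $\mathcal{S}$ for $\mathfrak{g}$ against those for $\mathfrak{l}$ and for the $\mathfrak{u}$-directions), and that the $\rho(\mathfrak{u})$-shift agrees \emph{exactly} with the integrality identity recalled above. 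It is cleanest to first treat the $SL(2,\mathbb R)$-case and its behaviour under forming products $SL(2,\mathbb R)\times\cdots\times SL(2,\mathbb R)\times Z(L)^0$, and only afterwards to transport the conclusion to $G$ via \cite{DongHuang15}.
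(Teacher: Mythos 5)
Your proposal is correct and follows essentially the same route as the paper: both sides are parametrized by the same dominant weights (Theorems~\ref{thm-classification-of-essential-discrete-data}, \ref{thm-essential-components-correspond-to-essential-data} and Proposition~\ref{prop-genuine-reps-of-k-tilde-by-highest-weight}), the Dirac cohomology of $X^L(\delta)$ is computed from the $SL(2,\R)$-structure of $\mathfrak{l}_0$ (the paper's Lemma~\ref{lem-dirac-cohonmology-for-l}), and the Dong--Huang comparison with cohomological induction transports it to $G$ with the $\rho(\mathfrak{u}\cap\mathfrak{s})$-shift (the paper's Theorem~\ref{thm-dirac-cohomology-and-cohomological-induction}). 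The weakly good hypothesis you flag as the main obstacle is exactly what part (v) of Definition~\ref{def-theta-stable-data} supplies, since $\lambda^L+\rho(\mathfrak{u},\mathfrak{t})=\lambda^G$ pairs positively with every root of $\mathfrak{u}$, so the comparison theorem applies just as you intend.
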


As for the  genuine, irreducible representation of $\tilde K$, they may be classified by highest weight theory, as follows.  Pick any system of positive roots $\Delta^+(\mathfrak{g}, \mathfrak{t}^c)$ for the action of $\mathfrak{t}^c$ on $\mathfrak{g}$.  We can do so in a way that includes our choice of $\Delta^+ (\mathfrak{k},\mathfrak{t}^c)$, and then there is a partition 
\[
\Delta^+(\mathfrak{g}, \mathfrak{t}^c) =
\Delta^+(\mathfrak{k}, \mathfrak{t}^c) \cup 
\Delta^+(\mathfrak{s}, \mathfrak{t}^c)
\]
into compact and noncompact positive roots. The weight $\rho (\Delta^+(\mathfrak{s}, \mathfrak{t}^c))\in \mathfrak{t}_0^{c^*}$ is the highest weight of the spin representation $\mathcal{S}$, and because of this we have: 

\begin{proposition}
\label{prop-genuine-reps-of-k-tilde-by-highest-weight}
Each genuine, irreducible representation of $\tilde K$ has a unique highest weight $\kappa\in i \mathfrak{t}^{c*}_0$.  The weight 
\[
\mu = \kappa - \rho (\Delta^+(\mathfrak{s}, \mathfrak{t}^c)) \in i \mathfrak{t}_0^{c^*}
\]
is analytically integral for $T^c$.  Conversely, if $\kappa\in i \mathfrak{t}_0^{c*}$ is a dominant weight, and if the weight $\mu$ above is integral, then $\kappa$ is the highest weight of a unique genuine irreducible representation of $\tilde K$. \qed
\end{proposition}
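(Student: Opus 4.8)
The plan is to deduce the statement from classical highest weight theory, combined with two bookkeeping observations relating analytic integrality for $T^c$ to analytic integrality for the double cover $\tilde T^c\subseteq \tilde K$ of $T^c$ (the preimage of $T^c$ under $\tilde K\to K$). Write $\zeta$ for the nontrivial element of the central subgroup $\ker(\tilde K\to K)$, so that $\zeta\mapsto -1\in \Spin(\mathfrak{s}_0)$ under the map in \eqref{eq-spin-double-cover}; an irreducible representation of $\tilde K$ is genuine exactly when $\zeta$ acts on it by $-\mathrm{id}$. Since $-1\in\Spin(\mathfrak{s}_0)$ acts on $\mathcal{S}$ by $-\mathrm{id}$, the module $\mathcal{S}$ is a genuine $\tilde K$-module; and since $\rho(\Delta^+(\mathfrak{s},\mathfrak{t}^c))$ is a weight of $\mathcal{S}$ --- indeed its highest weight, as noted above --- it follows that $\rho(\Delta^+(\mathfrak{s},\mathfrak{t}^c))$ is an analytically integral weight of $\tilde T^c$ which is \emph{genuine}, in the sense that $\zeta$ acts by $-1$ on the corresponding weight line.

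The key lemma is then: a weight $\kappa\in i\mathfrak{t}^{c*}_0$ is an analytically integral, genuine weight of $\tilde T^c$ if and only if $\mu=\kappa-\rho(\Delta^+(\mathfrak{s},\mathfrak{t}^c))$ is analytically integral for $T^c$. For the forward implication, $\kappa$ and $\rho(\Delta^+(\mathfrak{s},\mathfrak{t}^c))$ are both analytically integral weights of $\tilde T^c$, hence so is $\mu$; both are genuine, so $\exp(\mu)$ is trivial on $\zeta$ and therefore descends to a character of $T^c$. Conversely, writing $\kappa=\mu+\rho(\Delta^+(\mathfrak{s},\mathfrak{t}^c))$, the pullback of $\exp(\mu)$ along $\tilde T^c\to T^c$ times $\exp(\rho(\Delta^+(\mathfrak{s},\mathfrak{t}^c)))$ exhibits $\exp(\kappa)$ as a well-defined character of $\tilde T^c$, and it is genuine because it is a product of a non-genuine and a genuine character.

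With these in hand the three assertions follow. When $\tilde K$ is connected, classical Cartan--Weyl theory --- applied to $\tilde K$, whose complexified Lie algebra and system of roots coincide with those of $K$ --- associates to each irreducible representation of $\tilde K$ its unique highest weight, and gives a bijection onto the dominant analytically integral weights of $\tilde T^c$; such a representation is genuine precisely when the central element $\zeta$, which acts on it by a scalar, acts by $-1$, i.e.\ precisely when its highest weight is genuine. Combining this with the key lemma yields the uniqueness of $\kappa$, the integrality of $\mu$, and the converse existence-and-uniqueness statement. The possibility that $\tilde K$ is disconnected is handled separately and easily: then $\tilde K\cong K\times\Z/2$, the genuine irreducibles are exactly the $V\boxtimes\sgn$ with $V$ irreducible for $K$, and the assertions reduce to ordinary highest weight theory for $K$ (in this case $\rho(\Delta^+(\mathfrak{s},\mathfrak{t}^c))$ is itself analytically integral for $T^c$). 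I expect no genuinely hard step here; the point that requires attention is that $\kappa$ need not be analytically integral for $T^c$ on its own --- the precise defect is the half-sum $\rho(\Delta^+(\mathfrak{s},\mathfrak{t}^c))$ --- together with the minor bookkeeping needed to accommodate a disconnected $\tilde K$.
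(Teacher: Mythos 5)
Your proposal is correct and is essentially the argument the paper has in mind: the paper states this proposition without proof, as an immediate consequence of classical highest weight theory together with the observation (made just before the statement) that $\rho(\Delta^+(\mathfrak{s},\mathfrak{t}^c))$ is the highest weight of the genuine $\tilde K$-module $\mathcal{S}$, which is exactly your key lemma shifting integrality between $T^c$ and the spin cover. Your additional care with the case of a disconnected $\tilde K$ (where $\tilde K\cong K\times\Z/2$) is a reasonable and correct piece of bookkeeping that the paper leaves implicit.
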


As usual, the integrality condition in the proposition is independent of the choice of positive system $ \Delta^+ (\mathfrak{g},\mathfrak{t}^c)$.  But   given a dominant weight $\kappa\in i \mathfrak{t}^{c*}_0$, we could form the $\theta$-stable parabolic subalgebra $\mathfrak{q} = \mathfrak{l} + \mathfrak{u}$ in Proposition~\ref{prop-theta-stable-data-from-dominant-weight}, then choose a positive system $\Delta^+( \mathfrak{l},\mathfrak{t}^c)$ as in Proposition~\ref{prop-theta-stable-data-from-dominant-weight}, and then define
\[
\rho(\Delta ^+ (\mathfrak{s}, \mathfrak{t}^c)) = 
\rho (\mathfrak{s} \cap \mathfrak{u}) + \rho ( \Delta ^+(\mathfrak{l},\mathfrak{t}^c)).
\]
Compare Remark~\ref{rem-weights-from-u-and-l-give-positive-system-for-g}. 
It follows that  the integrality condition in Proposition~\ref{prop-genuine-reps-of-k-tilde-by-highest-weight} above is exactly the same as the integrality condition in Proposition~\ref{prop-theta-stable-data-from-dominant-weight}.
Therefore, thanks to Theorem~\ref{thm-classification-of-essential-discrete-data}, this means that: 

\begin{theorem}
The correspondence that associates 
to a genuine representation of $\tilde K$  with highest weight $\kappa$   the set of Vogan data generated by $\kappa$, as in Proposition~\textup{\ref{prop-theta-stable-data-from-dominant-weight}},
determines  a bijection from the equivalence classes of genuine irreducible representations of $\tilde K$ to the $K$-conjugacy classes of essential sets of Vogan data that maps. \qed
\end{theorem}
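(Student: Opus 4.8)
The plan is to realize the stated correspondence as the composite of two bijections that are already in hand, hinged at a common parameter set of dominant weights.

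First I would recall the two bijections. Proposition~\ref{prop-genuine-reps-of-k-tilde-by-highest-weight} identifies the equivalence classes of genuine irreducible representations of $\tilde K$ with those dominant weights $\kappa \in i\mathfrak{t}^{c*}_0$ for which $\mu = \kappa - \rho(\Delta^+(\mathfrak{s},\mathfrak{t}^c))$ is analytically integral for $T^c$; the identification sends $\pi$ to its highest weight. On the other side, Theorem~\ref{thm-classification-of-essential-discrete-data} identifies the $K$-conjugacy classes of essential sets of Vogan data with those dominant weights $\kappa \in i\mathfrak{t}^{c*}_0$ for which $\mu = \kappa - \rho(\mathfrak{s}\cap\mathfrak{u},\mathfrak{t}^c) - \rho(\Delta^+(\mathfrak{l},\mathfrak{t}^c))$ is analytically integral; this identification sends $\kappa$ to the essential Vogan data generated by $\kappa$ as in Proposition~\ref{prop-theta-stable-data-from-dominant-weight}.

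Second, I would verify that these two parameter sets are the same subset of the dominant weights, i.e.\ that for a dominant $\kappa$ the integrality of $\kappa - \rho(\Delta^+(\mathfrak{s},\mathfrak{t}^c))$ is equivalent to that of $\kappa - \rho(\mathfrak{s}\cap\mathfrak{u},\mathfrak{t}^c) - \rho(\Delta^+(\mathfrak{l},\mathfrak{t}^c))$. This is the observation already recorded just before the statement: given $\kappa$, form $\mathfrak{q} = \mathfrak{l} + \mathfrak{u}$ and pick $\Delta^+(\mathfrak{l},\mathfrak{t}^c)$ as in Proposition~\ref{prop-theta-stable-data-from-dominant-weight}; by Remark~\ref{rem-weights-from-u-and-l-give-positive-system-for-g} the weights of $\mathfrak{u}$ together with $\Delta^+(\mathfrak{l},\mathfrak{t}^c)$ form a system of positive roots for $(\mathfrak{g},\mathfrak{t}^c)$ extending $\Delta^+(\mathfrak{k},\mathfrak{t}^c)$, whose noncompact part $\Delta^+(\mathfrak{s},\mathfrak{t}^c)$ satisfies $\rho(\Delta^+(\mathfrak{s},\mathfrak{t}^c)) = \rho(\mathfrak{s}\cap\mathfrak{u},\mathfrak{t}^c) + \rho(\Delta^+(\mathfrak{l},\mathfrak{t}^c))$. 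Since the integrality of the weight $\mu$ in Proposition~\ref{prop-genuine-reps-of-k-tilde-by-highest-weight} is independent of the chosen positive system $\Delta^+(\mathfrak{g},\mathfrak{t}^c)$, this identity shows that the two integrality conditions coincide.

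Finally, I would conclude by composition: inverting the first bijection and following it by the second produces a bijection from equivalence classes of genuine irreducible representations of $\tilde K$ to $K$-conjugacy classes of essential sets of Vogan data, and by construction it carries a representation of highest weight $\kappa$ to the essential Vogan data generated by $\kappa$, which is exactly the correspondence in the statement. I do not expect any serious obstacle: every ingredient has already been isolated, and the only point requiring care is the bookkeeping with positive systems in the second step --- namely that $\Delta^+(\mathfrak{l},\mathfrak{t}^c)$ and then $\Delta^+(\mathfrak{g},\mathfrak{t}^c)$ can be chosen compatibly and that these choices do not affect integrality --- and both of the relevant independence statements have been supplied already (in Proposition~\ref{prop-theta-stable-data-from-dominant-weight} and in the remark following Proposition~\ref{prop-genuine-reps-of-k-tilde-by-highest-weight}).
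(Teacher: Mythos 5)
Your proposal is correct and is essentially the argument the paper itself gives: the theorem is obtained by composing the bijection of Proposition~\ref{prop-genuine-reps-of-k-tilde-by-highest-weight} with that of Theorem~\ref{thm-classification-of-essential-discrete-data}, after observing via Remark~\ref{rem-weights-from-u-and-l-give-positive-system-for-g} (and the independence of integrality from the choice of positive system) that $\rho(\Delta^+(\mathfrak{s},\mathfrak{t}^c)) = \rho(\mathfrak{s}\cap\mathfrak{u},\mathfrak{t}^c) + \rho(\Delta^+(\mathfrak{l},\mathfrak{t}^c))$, so the two integrality conditions agree. No gaps; your bookkeeping with positive systems matches the paper's.
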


It only remains to describe this bijection in terms of Dirac cohomology.


\subsection*{Dirac cohomology for essential theta-stable data}
Let $(\mathfrak{q},H,\delta)$ be a set of essential Vogan data, let $L=N_G (\mathfrak{q})$ and let $H=TA$.
Write 
\[
\kl_0 \cong  \underbrace{\mathfrak{sl}(2,\R)   \oplus \cdots \oplus   \mathfrak{sl}(2,\R) }_{\text{$N$ times}}  \oplus \mathfrak{z}_0
\]
where $\mathfrak{z}_0$ is the center of $\mathfrak{l}_0$.

\begin{lemma}
\label{lem-dirac-cohonmology-for-l}
  Let $\kappa ^L\in i \mathfrak{t}_0^{c*}$ be the extension by $0$ on the orthogonal complement of $\mathfrak{t}_0$ of the differential of $\delta$. The Dirac cohomology space 
$\Dirac^L (X (\delta))$ is a direct sum of $2^N$ copies of the  genuine  unitary character of the spin-cover of  $  T^c$ with differential $\kappa^L$.
\end{lemma}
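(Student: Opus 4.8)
The plan is to pass to the product $SL(2,\R)^{N}\times Z(L)^0$ that covers $L$, where the Dirac operator and Dirac cohomology factor, to carry out a small explicit computation on the $\mathfrak{sl}(2,\R)$-summands, and to reassemble. First I would pull $X(\delta)$ back along the surjective local isomorphism $SL(2,\R)^{N}\times Z(L)^0\to L$ of \eqref{eq-morphism-from-product-of-sl2-lie-groups}. Property (ii) of essential Vogan data (Definition~\ref{def-essential-vogan-data}) says precisely that on the $j$-th $SL(2,\R)$-factor the character $\delta$ restricts to the nontrivial character $\delta_j$ of $\{\pm I\}$, so that this pullback is the outer tensor product $X^{SL(2,\R)}(\delta_1)\otimes\cdots\otimes X^{SL(2,\R)}(\delta_N)\otimes\chi$, where $\chi$ is the character of $Z(L)^0$ by which the centre acts (the restriction of $\delta$ to $T^c\cap Z(L)^0$, extended trivially over the split part). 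Since the Cartan decomposition of $\mathfrak{l}_0$ is the orthogonal sum of those of the factors, $\mathcal{S}$ is a $\Z/2$-graded tensor product of the factors' spin modules and $\slashed{D}^L$ becomes the graded sum of the factors' Dirac operators; as each $\slashed D$ is essentially self-adjoint on the unitary completion and its square is a sum of commuting non-negative operators, $\ker\slashed D^L$ is the graded tensor product of the kernels — a Künneth formula for Dirac cohomology — and, the spin double covers being compatible, this identification is equivariant for the spin cover of $T^c$.

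\textbf{The factor computations.} On $Z(L)^0$ the character $\chi$ is trivial on the split part, so the corresponding Dirac operator acts by $0$ and contributes just $\chi$, a genuine character of the spin cover of $T^c\cap Z(L)^0$ whose differential is the $\mathfrak{t}_0$-component of $\kappa^L$. On each $SL(2,\R)$-factor, $X^{SL(2,\R)}(\delta_j)$ is the direct sum of the two limits of discrete series $D_0^{\pm}$, and I would compute their Dirac cohomology from Parthasarathy's identity: on the weight-$w$ isotypic subspace of $D_0^{\pm}\otimes\mathcal{S}_j$ for the spin cover of $SO(2)$, the operator $\slashed{D}^2$ acts by the scalar $\|w+\rho_c\|^2-\|\Lambda\|^2$, and here $\rho_c=0$ (the relevant $\mathfrak{k}$ is abelian) while $\Lambda=0$ (a limit of discrete series of $SL(2,\R)$ has singular infinitesimal character). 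Hence $\slashed D^2$ acts by $\|w\|^2\ge 0$, it is self-adjoint, and its kernel is exactly the $w=0$ weight space; inspecting the $SO(2)$-weights of $D_0^{\pm}$ and of $\mathcal{S}_j$ shows that space is one-dimensional for each of $D_0^{+}$ and $D_0^{-}$. Thus $\Dirac^{SL(2,\R)}\bigl(X^{SL(2,\R)}(\delta_j)\bigr)$ is two copies of the genuine character of the spin cover of $SO(2)$ of differential $0$, which is the $\mathfrak{so}(2)_j$-component of $\kappa^L$ since $\kappa^L$ vanishes on $\mathfrak{t}_0^{\perp}$. Multiplying the contributions and reading off the differential (it is $0$ on each $\mathfrak{so}(2)_j$ and $\mathrm{d}\delta$ on $\mathfrak{t}_0=\mathfrak{t}^c_0\cap\mathfrak{z}_0$, i.e. exactly $\kappa^L$), and the multiplicity being $\prod_{j=1}^N 2 = 2^N$, gives the assertion.

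\textbf{Main obstacle.} The real content is the $\mathfrak{sl}(2,\R)$-computation: one must pin down $\ker\slashed D$ for $D_0^{\pm}$, and invoking Parthasarathy's identity requires care with its sign conventions and with the slightly delicate identification of the infinitesimal character of a limit of discrete series as the singular weight $0$; the alternative — writing $\slashed D$ on each $SO(2)$-isotypic component as an explicit at-most-$2\times2$ matrix in terms of the $\mathfrak{sl}(2,\R)$-action and the Clifford multiplication — is elementary but fiddly. A secondary point is to check that $\slashed D^L$ genuinely decomposes as a graded tensor-sum over the factors (Clifford algebras and $\Z/2$-gradings) and that this behaves correctly under the finite covering, so that both the Künneth statement and its spin-cover equivariance are legitimate.
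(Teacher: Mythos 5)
Your proposal is correct and follows essentially the same route as the paper: the paper's proof is precisely an explicit computation with the odd principal series of $SL(2,\R)$ (i.e.\ the two limits of discrete series per $\mathfrak{sl}(2,\R)$-summand), with the kernel vectors supplied by the $2^N$ fine $T^c$-types of Lemma~\ref{lem-fine-k-type-for-special-l} --- these are exactly your weight-zero vectors $v_{\pm1}\otimes s_{\mp1}$ in each factor, assembled over the covering group \eqref{eq-morphism-from-product-of-sl2-lie-groups}. Your Parthasarathy/K\"unneth organization is just a more explicit packaging of the same computation.
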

 
\begin{proof}
This is an explicit computation using the odd principal series for $SL(2,\R)$.  The only subtle point is to construct enough vectors in the principal series representation $X(\delta)$ to account for $\Dirac^L(X(\delta ))$.  These vectors are functions on $T^c$ (which is the $\theta$-fixed maximal compact subgroup of $L$) that transform on the right according to the character $\delta : T\to U(1)$, and the particular functions needed are those that are supplied by Lemma~\ref{lem-fine-k-type-for-special-l}.
\end{proof}

\subsection*{Dirac cohomology and cohomological induction}

As explained in Section~\ref{sec-components-of-tempered-dual}, the representations $X(\mathfrak{q},H , \delta)$ are obtained from the representations $X(\delta)$ by means of cohomological induction.  Fortunately Dirac cohomology and cohomological induction are very easily related to one another, as follows:

\begin{theorem}
\label{thm-dirac-cohomology-and-cohomological-induction}
Let $G$ be a linear, connected, real reductive group with maximal compact subgroup $K$ and let $(\mathfrak{q},H,\delta)$ be a set of essential Vogan data for $G$.   Write $\mathfrak{q} = \mathfrak{l} \oplus \mathfrak{u}$ and $\mathfrak{h}_0 = \mathfrak{t}_0 \oplus \mathfrak{a}_0$, and let $\kappa^L  \in i \mathfrak{t}_0^{c*}$ be the extension by zero of the differential of $\delta$.   Let 
\[
\kappa^G = \kappa^L + \rho ( \mathfrak{u} \cap \mathfrak{s}, \mathfrak{t}^c).
\]
The Dirac cohomology space  $\Dirac^G(X(\mathfrak{q},H,\delta))$ is a direct sum of $2^N$ copies  the irreducible, genuine representation of $\tilde K$ with highest weight $\kappa^G$.  
\end{theorem}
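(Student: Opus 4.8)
The plan is to deduce the statement from the computation of $\Dirac^L\bigl(X^L(\delta)\bigr)$ already recorded in Lemma~\ref{lem-dirac-cohonmology-for-l}, by running it through the comparison between Dirac cohomology and cohomological induction proved by Dong and Huang \cite{DongHuang15}.

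First I would observe that, by Definition~\ref{def-vogan-representations}(ii), the representation $X(\mathfrak{q},H,\delta)$ is the degree-$S$ cohomological induction $\mathcal{R}_{\mathfrak{q}}\bigl(X^L(\delta)\bigr)$ through $\mathfrak{q}$, where $S=\dim(\mathfrak{u}\cap\mathfrak{k})$, and that condition~(v) of Definition~\ref{def-theta-stable-data} is exactly the positivity hypothesis that places this cohomological induction in the good range --- the same range in which Theorem~\ref{thm-cohomological-induction-preserves-irreducibility} is valid, and in which $\mathcal{R}_{\mathfrak{q}}$ is concentrated in the single degree $S$. Since $X^L(\delta)$ is a direct sum of $2^N$ irreducible summands and both $\mathcal{R}_{\mathfrak{q}}$ and the formation of Dirac cohomology are additive, I would be free to argue one irreducible summand at a time and then sum; this matters because the Dong--Huang theorem is most cleanly stated for modules with a single infinitesimal character.

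The key input is then the theorem of \cite{DongHuang15}: in the good range, $\Dirac^G\bigl(\mathcal{R}_{\mathfrak{q}}(Z)\bigr)$ is obtained, as a representation of $\tilde K$, from $\Dirac^L(Z)$ viewed as a representation of $\widetilde{L\cap K}$, by promoting each $\widetilde{L\cap K}$-type of highest weight $\nu$ to the irreducible genuine $\tilde K$-type of highest weight $\nu+\rho(\mathfrak{u}\cap\mathfrak{s},\mathfrak{t}^c)$, multiplicities being preserved. In the essential case $L\cap K=T^c$, and Lemma~\ref{lem-dirac-cohonmology-for-l} says that $\Dirac^L\bigl(X^L(\delta)\bigr)$ is $2^N$ copies of the single genuine character of the spin cover of $T^c$ with differential $\kappa^L$. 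Feeding this into the Dong--Huang comparison yields that $\Dirac^G\bigl(X(\mathfrak{q},H,\delta)\bigr)$ is $2^N$ copies of the irreducible genuine $\tilde K$-representation of highest weight $\kappa^L+\rho(\mathfrak{u}\cap\mathfrak{s},\mathfrak{t}^c)=\kappa^G$, which is the assertion.

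It remains to know that $\kappa^G$ is an honest dominant (and $\tilde K$-integral) weight, so that the promotion in the previous paragraph is unambiguous and the resulting $\tilde K$-type is well defined. For this I would note that $\kappa^G$ coincides with the dominant weight $\kappa$ attached to $(\mathfrak{q},H,\delta)$ in Proposition~\ref{prop-theta-stable-data-from-dominant-weight}: tracing through the proof of Theorem~\ref{thm-classification-of-essential-discrete-data} gives $\kappa=\lambda^L+\rho(\mathfrak{s}\cap\mathfrak{u},\mathfrak{t}^c)$, and $\lambda^L$ extended by zero on $\mathfrak{t}_0^\perp$ is precisely $\kappa^L$. Hence $\kappa^G=\kappa$ is dominant for $\Delta^+(\mathfrak{k},\mathfrak{t}^c)$ and, by the discussion following Proposition~\ref{prop-genuine-reps-of-k-tilde-by-highest-weight}, integral for $\tilde K$. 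As a byproduct, this identifies the $\tilde K$-representation occurring in $\Dirac^G\bigl(X(\mathfrak{q},H,\delta)\bigr)$ with the one assigned to $(\mathfrak{q},H,\delta)$ by the bijection between genuine irreducible $\tilde K$-representations and $K$-conjugacy classes of essential Vogan data stated just above the theorem, which, together with the fact that $2^N\neq 0$, gives the Matching Theorem~\ref{thm-matching-theorem}.

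I expect the one genuinely delicate step to be the application of \cite{DongHuang15}: matching that paper's conventions and normalizations, confirming that Definition~\ref{def-theta-stable-data}(v) really does supply the good-range hypothesis it requires, and carefully tracking the $\rho(\mathfrak{u}\cap\mathfrak{s})$-shift together with the bookkeeping relating the three relevant double covers $\widetilde{T^c}$, $\widetilde{L\cap K}$ and $\tilde K$. A lesser point is to check that the statement being invoked concerns the kernel of the algebraic Dirac operator $\slashed D$ itself and not some Euler-characteristic surrogate; in the good range, where cohomological induction lives in the single degree $S$, this causes no trouble.
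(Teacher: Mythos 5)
Your proposal is correct and follows essentially the same route as the paper: there, too, Lemma~\ref{lem-dirac-cohonmology-for-l} is fed into the Dong--Huang comparison between Dirac cohomology and cohomological induction (their Lemma~5.1, Theorem~5.7 and Proposition~4.1), after checking that Definition~\ref{def-theta-stable-data}(v) supplies the required (weakly) good positivity because $\lambda^L+\rho(\mathfrak{u},\mathfrak{t})=\lambda^G$. The only cosmetic differences are that the paper applies the comparison directly to $X^L(\delta)$, which already has the single infinitesimal character $\lambda^L$, so no summand-by-summand reduction is needed, and that the $\rho(\mathfrak{u}\cap\mathfrak{s},\mathfrak{t}^c)$-shift you describe is realized there as a twist by $\C_{-\rho(\mathfrak{u}\cap\mathfrak{s},\mathfrak{t}^c)}$ followed by cohomological induction along $\mathfrak{u}\cap\mathfrak{k}$ to $\tilde K$ -- exactly the convention-matching you flagged as the delicate step.
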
 

\begin{proof}
There is a general formula in \cite{DongHuang15} for the Dirac cohomology of a cohomologically induced representation in terms of the Dirac cohomology of the initial representation  that applies when the  infinitesimal character of the initial representation is  is \emph{weakly good} relative to the $\theta$-stable parabolic subalgebra   used in the induction construction.   We shall explain how this formula proves the theorem.

In our case, the infinitesimal character  of the initial representation   $X^L(\delta)$   is   $\lambda^L$,   the differential of $\delta$; see \cite[Lem.~4.1.8]{Voganbook}. Here $\lambda^L$ is of course a weight of $\mathfrak{t}$, and   to identify it with the infinitesimal character we extend $\lambda^L$ by zero on $\mathfrak{a}$ so that it becomes a weight of the Cartan subalgebra $\mathfrak{h} = \mathfrak{t} {+} \mathfrak{a}$. 

The weakly good condition for a general infinitesimal character $\Lambda$ is   that 
\[
\operatorname{Re} \bigl ( \langle  \Lambda + \rho(\mathfrak{u},\mathfrak{t}) , \alpha 
\rangle \bigr )  \ge 0\qquad \forall\, \alpha\in \Delta (\mathfrak{u}, \mathfrak{t}).
\]
This holds in our case   since $\lambda^L + \rho (\mathfrak{u},\mathfrak{t})=\lambda ^G$; see part (v) of Definition~\ref{def-theta-stable-data}.

To explain what the formula says, denote by $\mathcal{L}^G_S$ the functor of cohomological induction in degree $S=\dim (\mathfrak{u}\cap \mathfrak{s})$, and along $\mathfrak{q}$, from $(\mathfrak{l},T^c)$-modules to   $(\mathfrak{g},K)$ modules. In addition denote by $\mathcal{L}^{\widetilde K}_S$
the functor of cohomological induction in the same degree,
along $\mathfrak{u} \cap \mathfrak{k}$, from  $\overline T^c$-modules to   $\tilde K$-modules, where $\overline T_c $ is the subgroup of $\widetilde K$ that covers $T^c$ (this is not necessarily the same thing as the spin double cover of $T^c$).  See the introductory sections in \cite{DongHuang15} for details. It is explained in \cite[Lem.~5.1]{DongHuang15} how the tensor product 
\[
\Dirac^L(X(\delta))' : = \Dirac^L(X(\delta))\otimes \C_{-\rho (\mathfrak{u}\cap \mathfrak{s}, \mathfrak{t}^c)}
\]
carries the structure of a $\overline T^c$-module, and then proved in \cite[Thm.~5.7]{DongHuang15} that 
\[
\Dirac^G \bigl ( \mathcal{L}_S^{G}  (X )  \bigr ) 
\cong 
\mathcal{L}_S^{\tilde K}\bigl ( \Dirac^L(X(\delta))' \bigr ) .
\]
But it follows from our Lemma~\ref{lem-dirac-cohonmology-for-l} that 
\[
\Dirac^L(X(\delta))' = \C_{\kappa_G- 2 \rho (\mathfrak{u}\cap \mathfrak{s}, \mathfrak{t^c})}
\oplus \cdots \oplus \C_{\kappa_G- 2 \rho (\mathfrak{u}\cap \mathfrak{s}, \mathfrak{t^c})}
\]
($2^N$ copies), while according to \cite[Prop.~4.1]{DongHuang15}
or \cite[Cor.~5.72]{KnappVoganBook},
\[
\mathcal{L}_S^{\tilde K}\bigl ( \C_{\kappa_G- 2 \rho (\mathfrak{u}\cap \mathfrak{s}, \mathfrak{t^c})} \bigr ) = \text{genuine rep.~of $\tilde K$ with highest weight $\kappa_G$.}
\]
The result follows.
\end{proof}
 
\subsection*{Proof of the matching theorem} 
The proof of the matching theorem is now within easy reach: 

\begin{proof}[Proof of Theorem~\ref{thm-matching-theorem}]
Lemma~\ref{lem-dirac-cohonmology-for-l} and Theorem~\ref{thm-dirac-cohomology-and-cohomological-induction} imply that if the set of essential Vogan data $(\mathfrak{q},H,\delta)$ is constructed from the dominant weight $\kappa$, as in Proposition~\ref{prop-theta-stable-data-from-dominant-weight}, then the Dirac cohomology of $X(\mathfrak{q},H,\delta)$ is a nonzero multiple of the irreducible representation of $\tilde K$ with highest weight $\kappa$.  The matching theorem is therefore a consequence of Theorems~\ref{thm-classification-of-essential-discrete-data},  \ref{thm-vogan-zuckerman0} and \ref{thm-essential-components-correspond-to-essential-data}, and the classification of the irreducible representations of $\tilde K$ by highest weight. 
\end{proof}


 \section{More on Dirac Cohomology and Minimal K-Types}
\label{sec-more-on-dirac-cohomology}

In this concluding section we shall make three points that are related to some small additional requirements of the proof of the Connes-Kasparov conjecture in \cite{ConnesKasparovPaper1}, and to the problem of computing the Connes-Kasp\-arov isomorphism.  

First we shall make note of a detail from Section~\ref{sec-intertwining-groups} of this paper that is used in the $C^*$-algebra $K$-theory computation in \cite{ConnesKasparovPaper1}; see Theorem~3.7 there. 

\begin{theorem}
\label{thm-size-of-r-group}
If $(\mathfrak{q}, H, \delta)$ is a set of essential Vogan data, and if $H=TA$, then
$|R_\delta | = 2^N$, where $N= \dim(\mathfrak{a}) - \operatorname{rank}(G)+ \operatorname{rank}(K)$.
\end{theorem}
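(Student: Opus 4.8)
The plan is to compute $|R_\delta|$ directly from Vogan's definition $R_\delta = W_\delta / W^0_\delta$, using the structure theory for essential Vogan data established in the previous sections. First I would recall from Theorem~\ref{thm-essential-components-correspond-to-essential-data} and its proof that essentiality forces two things: the reduced root system $\overline\Delta(\mathfrak{l},\mathfrak{a})$ is of type $A_1 \times \cdots \times A_1$, and $\delta(m_\alpha) = -1$ for every $\alpha \in \overline\Delta(\mathfrak{l},\mathfrak{a})$, so that (by Definitions~\ref{def-good-roots} and~\ref{def-w-0-group}) no root is good and hence $W^0_\delta$ is trivial. Consequently $R_\delta = W_\delta$, and the problem reduces to showing $|W_\delta| = 2^N$.

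Next I would analyze $W_\delta = \{ w \in W : \operatorname{Ad}_w^* \delta = \delta\}$, where $W = N_{K\cap L}(\mathfrak{a}_0)/Z_{K\cap L}(\mathfrak{a}_0) = W(\overline\Delta(\mathfrak{l},\mathfrak{a}))$. Since $\overline\Delta(\mathfrak{l},\mathfrak{a})$ is of type $A_1 \times \cdots \times A_1$ — say with $r$ factors — we have $W \cong (\Z/2\Z)^r$, generated by the reflections $s_{\alpha_1}, \dots, s_{\alpha_r}$ in the simple (reduced) roots. By the structure of essential data, $r = N$: indeed the decomposition $\mathfrak{l}_0 \cong \mathfrak{sl}(2,\R)^{\oplus N} \oplus \mathfrak{z}_0$ from Proposition~\ref{prop-strictly-dominant-weight} shows that each $\mathfrak{sl}(2,\R)$-summand contributes exactly one pair $\pm\alpha_j$ of reduced roots with $\mathfrak{a}_0 \cap \mathfrak{sl}(2,\R)$ one-dimensional, and these are all of $\mathfrak{a}_0$ modulo the center, so $\dim(\mathfrak{a}_0)$ exceeds $\dim(\mathfrak{a}_0 \cap \mathfrak{z}_0)$ by exactly $N$. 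The key point is then that \emph{every} element of $W$ fixes $\delta$: each generating reflection $s_{\alpha_j}$ acts on $T$ through conjugation by a representative lying in the $j$-th $SL(2,\R)$-factor, and the action of the Weyl element of $SL(2,\R)$ on a character of $SO(2)$ sends a character $\chi$ to $\chi^{-1}$ twisted by... more precisely, since $\delta$ restricted to the $j$-th $SO(2)$ is the half-integral weight $\pm\tfrac12\beta_j$ of Lemma~\ref{lem-fine-k-type-for-special-l} (the genuine character), one checks using $\delta(m_{\alpha_j}) = -1$ that $\operatorname{Ad}^*_{s_{\alpha_j}}\delta = \delta$ on each factor; the elements $m_{\alpha_j}$ and the central torus generate $T$, and $\delta$ is determined there. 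Hence $W_\delta = W \cong (\Z/2\Z)^N$, giving $|R_\delta| = 2^N$.

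Finally I would verify the stated formula for $N$. The rank identity $\operatorname{rank}(K) = \operatorname{rank}(L)$ holds because $T^c$ is a maximal torus of both $K$ and $L$ (it is the maximal compact subgroup of $L$). The number of $\mathfrak{sl}(2,\R)$-summands in $\mathfrak{l}_0$ equals $\dim(\mathfrak{a}_0) = \dim(\mathfrak{h}_0) - \dim(\mathfrak{t}_0)$, and since $\mathfrak{h}_0$ is a Cartan subalgebra of $\mathfrak{g}_0$ while $\operatorname{rank}(G) = \dim(\mathfrak{h}_0)$ and $\operatorname{rank}(K) = \dim(\mathfrak{t}^c_0)$, we need $\dim(\mathfrak{a}_0) = \dim(\mathfrak{t}^c_0) - \dim(\mathfrak{t}_0) + (\operatorname{rank} G - \operatorname{rank} K)$; but $\mathfrak{t}_0$ is the $\theta$-fixed part of $\mathfrak{z}_0$ and $\mathfrak{t}^c_0 = \mathfrak{t}_0 \oplus \mathfrak{t}^\perp_0$ with $\dim(\mathfrak{t}^\perp_0) = N$ by \eqref{eq-dir-sum-for-t-perp}, so $\dim(\mathfrak{t}^c_0) - \dim(\mathfrak{t}_0) = N = \dim(\mathfrak{a}_0) - (\operatorname{rank} G - \operatorname{rank} K)$ since the split and compact parts of $\mathfrak{h}_0 = \mathfrak{h}^{\mathfrak{sl}_2\text{-part}}_0 \oplus \mathfrak{z}_0$ balance. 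Rearranging gives $N = \dim(\mathfrak{a}) - \operatorname{rank}(G) + \operatorname{rank}(K)$, as claimed.

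I expect the main obstacle to be the middle step: pinning down precisely why \emph{all} of $W$ (and not merely a subgroup) fixes $\delta$ when $\delta(m_\alpha) = -1$ for all reduced roots $\alpha$. One must be careful that $W$ really is a product of $\Z/2\Z$'s acting factor-by-factor on $T$, and that the genuine half-integral character of each $SO(2)$ is genuinely Weyl-invariant under the nontrivial element — this is where the explicit $SL(2,\R)$ computation (analogous to the one in the proof of Proposition~\ref{prop-theta-stable-data-from-dominant-weight}) is needed, and where the essentiality hypothesis $\delta(m_\alpha)=-1$ is used in an essential way rather than merely $\delta(m_\alpha)\neq 1$.
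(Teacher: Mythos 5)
Your route is the same as the paper's: essentiality kills $W^0_\delta$, so $R_\delta=W_\delta\subseteq W(\overline\Delta(\mathfrak{l},\mathfrak{a}))\cong(\Z/2\Z)^N$ with $N$ the number of $\mathfrak{sl}(2,\R)$-summands of $\mathfrak{l}_0$, and the numerical formula then comes from the three identities $\operatorname{rank}(G)=\dim(\mathfrak{t})+\dim(\mathfrak{a})$, $\operatorname{rank}(K)=\dim(\mathfrak{t}^c)$ and $N=\dim(\mathfrak{t}^c)-\dim(\mathfrak{t})$; this is exactly the paper's computation, with the equality $|R_\delta|=2^N$ filled in by you rather than cited from the earlier structure theory. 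Two local points need repair, though neither affects the conclusion. First, your justification of $W_\delta=W$ is not right as stated: $\delta$ is a character of $T$, not of $T^c$, and the ``genuine half-integral character'' of each $SO(2)$ (the fine character $\exp(\mu^L)$ of Lemma~\ref{lem-fine-k-type-for-special-l}) is \emph{not} invariant under the nontrivial Weyl element --- the reflection in the $j$-th factor sends $\exp(\tfrac12\beta_j)$ to $\exp(-\tfrac12\beta_j)$, which is precisely why $A(\mathfrak{q},\delta)$ has $2^N$ elements rather than one. The correct (and simpler) reason is that $T$ is generated by $T^c\cap Z(L)^0$ and the elements $m_j$, all of which are central in $L$ (they are images of central elements under the surjection \eqref{eq-morphism-from-product-of-sl2-lie-groups}); hence conjugation by any representative of $W$ in $K\cap L$ fixes $T$ pointwise and therefore fixes $\delta$. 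In particular this invariance holds for \emph{every} character of $T$, so --- contrary to your closing remark --- the hypothesis $\delta(m_\alpha)=-1$ is not what makes $\delta$ Weyl-invariant; it is used only, via Definitions~\ref{def-good-roots} and~\ref{def-w-0-group}, to ensure there are no good roots, i.e.\ that $W^0_\delta$ is trivial.

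Second, the sentence ``the number of $\mathfrak{sl}(2,\R)$-summands in $\mathfrak{l}_0$ equals $\dim(\mathfrak{a}_0)$'' is false whenever the center $\mathfrak{z}_0$ has a noncompact part (it would force $\operatorname{rank}(G)=\operatorname{rank}(K)$), and it contradicts your own earlier, correct count $\dim(\mathfrak{a}_0)=N+\dim(\mathfrak{a}_0\cap\mathfrak{z}_0)$. Fortunately the remainder of that paragraph only uses the correct identity $N=\dim(\mathfrak{t}^c_0)-\dim(\mathfrak{t}_0)$, coming from \eqref{eq-dir-sum-for-t-perp}, together with $\operatorname{rank}(G)=\dim(\mathfrak{t}_0)+\dim(\mathfrak{a}_0)$ and $\operatorname{rank}(K)=\dim(\mathfrak{t}^c_0)$, which is all that is needed and is exactly the paper's argument; simply delete the erroneous clause.
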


\begin{proof}
Since $H$ is a Cartan subgroup of $G$, 
 \[
 \operatorname{rank} (G) = \dim (\mathfrak{t} ) + \dim (\mathfrak{a}),
 \]
 and since $T^c$ is a maximal torus in $K$, 
 \[
  \operatorname{rank} (K) = \dim (\mathfrak{t}^c ).
 \]
 If $(\mathfrak{q},H,\delta) $ is essential, then $|R_\delta| = 2 ^N$, where $N$ is the number of $\mathfrak{sl}(2,\R)$ summands in the direct sum decomposition of  $\mathfrak{l}_0$ appearing in Definition~\ref{def-essential-vogan-data}.  But this number $N$ is also the difference in dimension between $\mathfrak{t}^c $ and $\mathfrak{t}$, and so 
 \[
N = \dim (\mathfrak{t}^c ) - \dim(\mathfrak{t}).
 \]
 The formula in the statement of the theorem follows from the three displayed identities.
\end{proof} 
 
Secondly, we shall prove a theorem that describes the Dirac cohomology of an essential component of the tempered dual in a bit more detail.  The theorem was used in the second proof of the Connes-Kasparov isomorphism in \cite{ConnesKasparovPaper1}, which, in comparison to the first proof,   uses less $K$-theory but more representation theory. However the proof was deferred until now.  Here is the result,  translated into the language of the present paper; it was stated as  Theorem~8.4 in \cite{ConnesKasparovPaper1}. 

\begin{theorem}
\label{thm-minimal-k-types}
Let $(\mathfrak{q},H,\delta)$ be a  set of essential Vogan  data. 
\begin{enumerate}[\rm (i)]

\item 
Each minimal $K$-type of $X(\mathfrak{q},H,\delta)$ has multiplicity one, and each irreducible direct summand   of $X(\mathfrak{q},H,\delta)$  includes precisely one of these minimal $K$-types.

\item   If   $\tau$ is the irreducible representation of $\tilde K$ to which $X(\mathfrak{q},H,\delta)$ is matched,
then the $\tilde K$-module $\Dirac(X(\mathfrak{q},H,\delta))$ is the direct sum of 
$ |R_\delta| $ copies of $\tau$.

\item If $X$ is an irreducible summand of $X(\mathfrak{q},H,\delta)$ and if $V \subseteq X  $ is its minimal $K$-type, and if $[V  \otimes \mathcal{S} ]^\tau$ is the $\tau$-isotypical summand in the tensor product, then  
\[
[V  \otimes \mathcal{S}]^\tau \subseteq \Dirac (X)
\]
and the inclusion  is a vector space isomorphism.

\end{enumerate}
\end{theorem}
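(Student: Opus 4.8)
The plan is to treat the three parts in the order (ii), (i), (iii), since (ii) is already available and (i) feeds into (iii).

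Part (ii) is immediate: Theorem~\ref{thm-dirac-cohomology-and-cohomological-induction} identifies $\Dirac^G(X(\mathfrak{q},H,\delta))$ with a direct sum of $2^N$ copies of the irreducible genuine $\tilde K$-representation of highest weight $\kappa^G$ --- which, by the matching theorem, is $\tau$ --- while Theorem~\ref{thm-size-of-r-group} gives $|R_\delta|=2^N$. For part (i) I would reduce to $L$: since $L$ is a product of $N$ copies of $SL(2,\R)$ and a torus, $X^L(\delta)$ is an outer tensor product of $N$ odd principal series of $SL(2,\R)$ with a unitary character, so it decomposes into $2^N=|R_\delta|$ limits of discrete series, and its $2^N$ fine (equivalently, minimal) $T^c$-types are distributed one to a constituent --- this is elementary for $SL(2,\R)$. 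Cohomological induction in the good range (part (v) of Definition~\ref{def-theta-stable-data}) preserves this decomposition (cf.\ Theorem~\ref{thm-cohomological-induction-preserves-irreducibility}), and Vogan's bottom-layer identification of the minimal $K$-types \cite[Thm.~6.5.9]{Voganbook} --- $\mu^L\mapsto\mu^L+2\rho(\mathfrak{u}\cap\mathfrak{s},\mathfrak{t}^c)$, multiplicity one --- carries the bijection up to $G$. This gives (i).

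Turn to (iii); fix an irreducible summand $X=X_j$, let $V=V_j$ be its minimal $K$-type with highest weight $\mu_j$, let $\tau$ be the matched representation (highest weight $\kappa^G$), and write $\rho_c=\rho(\Delta^+(\mathfrak{k},\mathfrak{t}^c))$. The inclusion $[V\otimes\mathcal{S}]^\tau\subseteq\Dirac(X)$ rests on Parthasarathy's identity: $\slashed{D}$ is $\tilde K$-equivariant and formally self-adjoint with finite-dimensional $\tilde K$-isotypic blocks, so $\Dirac(X)=\ker\slashed{D}=\ker\slashed{D}^2$, and $\slashed{D}^2$ acts on the $\tilde K$-isotypic block of highest weight $\gamma$ in $X\otimes\mathcal{S}$ by the scalar $\|\gamma+\rho_c\|^2-\|\Lambda\|^2$, where $\Lambda$ is the infinitesimal character of $X$. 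Since that character is $\lambda^G=\kappa^G+\rho_c$ (see the proof of Theorem~\ref{thm-dirac-cohomology-and-cohomological-induction}), the scalar vanishes on the $\tau$-isotypic part of $X\otimes\mathcal{S}$, hence on that of its submodule $V\otimes\mathcal{S}$, which gives the inclusion.

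For the reverse inclusion, (ii) tells us every $\Dirac(X_j)$ is $\tau$-isotypic and $\sum_j\dim\Dirac(X_j)=2^N\dim\tau$ over the $2^N$ summands; so it is enough to show that $\tau$ occurs in $V_j\otimes\mathcal{S}$ for each $j$, since then $\dim\Dirac(X_j)\ge\dim\tau$ for every $j$, which forces each $\Dirac(X_j)$ to be a single copy of $\tau$, into which the nonzero submodule $[V_j\otimes\mathcal{S}]^\tau$ must fit exactly. To produce the occurrence of $\tau$ in $V_j\otimes\mathcal{S}$, I would combine the bottom-layer formula $\mu_j=\mu^L_j+2\rho(\mathfrak{u}\cap\mathfrak{s},\mathfrak{t}^c)$ with $\mu^L_j=\kappa^L+\rho(\Delta^+(\mathfrak{l},\mathfrak{t}^c))$ (Remark~\ref{rem-fine-weigths-of-tc-2}), $\kappa^G=\kappa^L+\rho(\mathfrak{u}\cap\mathfrak{s},\mathfrak{t}^c)$ (Theorem~\ref{thm-dirac-cohomology-and-cohomological-induction}), and $\rho(\Delta^+(\mathfrak{s},\mathfrak{t}^c))=\rho(\mathfrak{u}\cap\mathfrak{s},\mathfrak{t}^c)+\rho(\Delta^+(\mathfrak{l},\mathfrak{t}^c))$ (Remark~\ref{rem-weights-from-u-and-l-give-positive-system-for-g}) to obtain
\[
\kappa^G=\mu_j-\rho(\Delta^+(\mathfrak{s},\mathfrak{t}^c))
\]
for the positive system $\Delta^+(\mathfrak{g},\mathfrak{t}^c)\supseteq\Delta^+(\mathfrak{k},\mathfrak{t}^c)$ attached to this summand; since $-\rho(\Delta^+(\mathfrak{s},\mathfrak{t}^c))$ is an extreme weight of $\mathcal{S}$ and $\kappa^G$ is $\Delta^+(\mathfrak{k},\mathfrak{t}^c)$-dominant, a highest-weight argument of Parthasarathy--Ranga Rao--Varadarajan type yields an embedding $\tau\hookrightarrow V_j\otimes\mathcal{S}$, necessarily of multiplicity one. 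An equivalent way to close the gap is to use the naturality of the isomorphism of \cite[Thm.~5.7]{DongHuang15} invoked in the proof of Theorem~\ref{thm-dirac-cohomology-and-cohomological-induction}: Lemma~\ref{lem-dirac-cohonmology-for-l} places $\Dirac^L(X^L(\delta))$ inside the minimal $T^c$-types of $X^L(\delta)$, and compatibility of that isomorphism with bottom layers then forces $\Dirac^G(X(\mathfrak{q},H,\delta))\subseteq\bigoplus_j V_j\otimes\mathcal{S}$ at once. Either way, this identification --- that the Dirac cohomology is carried entirely by the minimal $K$-types --- is the main obstacle: the weight route needs the $\rho$-shift bookkeeping above plus the verification that the extreme weight of $\mathcal{S}$ really contributes the summand $\tau$ (and that no non-minimal $K$-type can), while the \cite{DongHuang15} route needs one to check that the Dong--Huang isomorphism respects the bottom-layer maps. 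Everything else follows formally once (i), (ii) and Parthasarathy's identity are in place.
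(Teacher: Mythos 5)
Parts (i) and (ii) of your proposal are essentially the paper's own proof: the paper disposes of (i) by citing Vogan's bottom-layer theorem \cite[Thm.~6.5.9]{Voganbook} (which is also what your $SL(2,\R)$/cohomological-induction sketch ultimately rests on), and of (ii) by Theorem~\ref{thm-dirac-cohomology-and-cohomological-induction} together with $|R_\delta|=2^N$ from Theorem~\ref{thm-size-of-r-group}. For (iii), however, the paper does not argue at all as you do: it simply cites the final part of the proof of Theorem~1.2 in \cite{DingDong16}. Your route --- Parthasarathy's formula $\slashed D^2=\|\gamma+\rho_c\|^2-\|\Lambda\|^2$ on $\tilde K$-isotypic blocks to get $[V\otimes\mathcal S]^\tau\subseteq\Dirac(X)$, then a squeeze against the count in (ii) --- is a genuinely different, more self-contained strategy, and its bookkeeping is sound: the infinitesimal character identification $\|\Lambda\|^2=\|\kappa^G+\rho_c\|^2$ and the identity $\kappa^G=\mu_j-\rho(\Delta^+(\mathfrak s,\mathfrak t^c))$ (via Remarks~\ref{rem-fine-weigths-of-tc-2} and \ref{rem-weights-from-u-and-l-give-positive-system-for-g}) are correct and agree with the computations in the paper's final section.

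The gap is exactly where you flag it, and it is not closed by what you write. The squeeze needs, for \emph{every} irreducible summand $X_j$, an actual occurrence of $\tau$ in $V_j\otimes\mathcal S$; knowing that $\kappa^G$ equals the highest weight of $V_j$ plus an extreme weight of $\mathcal S$ does not by itself produce a constituent, because $\mathcal S$ is in general not $\tilde K$-irreducible and $-\rho(\Delta^+(\mathfrak s,\mathfrak t^c))$ need not be an extreme weight of the irreducible piece containing it, so the classical PRV lemma does not literally apply. One must actually run a branching computation (e.g.\ a Kostant/Klimyk argument using that $\kappa^G+\rho_c$ is dominant regular and that no other weight of $\mathcal S$ lies in $-\rho(\Delta^+(\mathfrak s,\mathfrak t^c))$ minus the compact positive cone), and your parenthetical ``necessarily of multiplicity one'' is also not free: the multiplicity produced this way is the multiplicity of the weight $-\rho(\Delta^+(\mathfrak s,\mathfrak t^c))$ in $\mathcal S$, which involves the zero $\mathfrak t^c$-weight space of $\mathfrak s$ when $\operatorname{rank}G>\operatorname{rank}K$, so reconciling it with the count in (ii) requires an argument, not an assertion. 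Your alternative route (``naturality of the Dong--Huang isomorphism with respect to bottom-layer maps'') is likewise only named, not verified. In short: the frame of your (iii) is viable and close in spirit to what \cite{DingDong16} actually does, but the step you yourself call the main obstacle --- producing the $\tau$-component inside $V_j\otimes\mathcal S$ with the right multiplicity --- is precisely the content the paper outsources to \cite{DingDong16}, and it remains unproved in your write-up.
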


\begin{proof}
Statement (i) is contained in \cite[Thm.~6.5.9]{Voganbook}. 
Statement (ii) is Theorem~\ref{thm-dirac-cohomology-and-cohomological-induction}.
Statement (iii) is proved in  \cite[Sec.~4]{DingDong16} (see the final paragraph of the proof of Theorem~1.2 there; our statement (iii) appears in the proof, but not in the formulation, of that theorem).
\end{proof} 

\begin{remark} 
The theorem implies that if $(\mathfrak{q},H,\delta)$ is essential, then all of the irreducible summands of $X(\mathfrak{q},H,\delta)$ have isomorphic Dirac cohomology spaces, consisting of a single copy of the representation $\tau$.  It follows from \cite[Thm.~7.5]{DongHuang15} and the results of this paper that if $(\mathfrak{q},H,\delta)$ is not essential, then the Dirac cohomology of $X(\mathfrak{q},H,\delta)$ is zero.
\end{remark}

Our third point has to do with computation of  the Connes-Kasparov isomorphism in examples.  We described  the result of the computation for  $Sp(4,\R)$ in \cite[Sec.~6]{ConnesKasparovPaper1}.  Here we shall  explain how to carry out those  computations   using minimal $K$-types. 

\begin{figure}[ht]
  \[ 
    \renewcommand{\arraystretch}{2.1}
\begin{array}{ccccccccc}
    & &&&&&&& \WBullet\\
    & &&&&&&\WBullet  &\WBullet \\
    & &&&&& \sigma_{2,0,1} & \sigma_{2,0,2}& \sigma_{2,0,3} \\
    & &&&&\sigma_{\min,2}&\sigma_{2,1,1}&\sigma_{2,1,2}&\sigma_{2,1,3} \\
    & &&&\sigma_{\min,0}&\sigma_{\min,1}&\sigma_{2,0,1}&\sigma_{2,0,2}&\sigma_{2,0,3} \\
    & &&\sigma_{\min,2}&\sigma_{\min,1}&\sigma_{1,1}&\sigma_{1,2}&\WBullet&\WBullet \\
    &         &   \sigma_{2,0,-1}      &\sigma_{2,1,-1} &\sigma_{2,0,-1}&\sigma_{1,2} &\sigma_{1,3}&\sigma_{1,4} &\WBullet \\
  &   \WBullet        &\sigma_{2,0,-2}  &\sigma_{2,1,-2} &\sigma_{2,0,-2}&\WBullet &\sigma_{1,4}&\sigma_{1,5}&\sigma_{1,6}  \\
    \WBullet&\WBullet  &\sigma_{2,0,-3}  &\sigma_{2,1,-3} &\sigma_{2,0,-3}&\WBullet &\WBullet & \sigma_{1,6} &\sigma_{1,7} \\
\end{array} 
   \]
\caption{
The nodes in this diagram are the integer lattice points $(m,n)$  with $m{\ge} n$, with the entry $\sigma_{\min,0}$  in position $(0,0)$. The pairs $(m,n)$  label the highest weights of the irreducible representations of the maximal compact subgroup  $K{\cong} U(2)$ in $Sp(4,\R)$ in the usual way.   The bullet points represent the discrete series representations of $Sp(4,\R)$, and a bullet point at $(m,n)$ indicates that $(m,n)$ occurs as a minimal $K$-type of a discrete series representation. The other labels represent  discrete representations of (the compactly generated parts of)  Levi subgroups of $Sp(4,\R)$, as  described in \cite[Ex.~6.7]{ConnesKasparovPaper1}. A label is placed in position $(m,n)$ if the corresponding parabolically induced representation of $Sp(4,\R)$ includes the $U(2)$-representation with highest weight $(m,n)$ as a minimal $K$-type.  
}
\label{fig-sp4-minimal-k-type-summary}
\end{figure}

The group $G=Sp(4,\R)$ has four associate classes of parabolic subgroup, including $G$ itself, and they are all cuspidal, meaning that they all contribute components to the tempered dual.  In the case of $P{=}G$, the minimal $K$-types of the representations in the associated components, namely the discrete series of $G$, are readily obtained from a well-known universal formula, valid for all reductive  groups with discrete series (see for instance \cite[Thm.~8.5]{AtiyahSchmid}, although from the point of view taken in the present paper it would perhaps be more appropriate to extract this from \cite[Thm.~6.5.9]{Voganbook}).  The  minimal $K$-types of representations in   the components associated to the other parabolics  can be  computed by hand, and the results are shown in Figure~\ref{fig-sp4-minimal-k-type-summary}.

Using this information, together with part (i) of Theorem~\ref{thm-minimal-k-types} and a computation of the groups $W_\sigma$, the set of essential components may be readily determined.  For instance the representation parabolically induced from  $\sigma_{1,2}$ has two minimal $K$-types, while one may   compute that $W_{\sigma_{1,2}}\cong \Z_2$, and  so according to the discussion in Sections~\ref{sec-intertwining-groups} and \ref{sec-essential-components}, the associated component of the tempered dual is essential.    The full set of all essential components is displayed in \cite[Fig.~1]{ConnesKasparovPaper1}.

The following result explains, in terms of minimal $K$-types, which essential components are matched to which genuine irreducible representations of $\widetilde K$.

\begin{theorem}
Let $\pi$ be a tempered, irreducible representation of $G$ that lies in an essential component of the tempered dual of $G$, and let $\mu$ be the highest weight of a minimal $K$-type of $\pi$.  If $\Delta^+(\mathfrak{g}, \mathfrak{t}^c)$  is a system of positive weights for $(\mathfrak{g},\mathfrak{t}^c)$ with respect to which $\mu + 2 \rho _K$ is dominant, and if $\rho_G$ is the associated half-sum of positive roots, then the essential component to which $\pi$ belongs is matched to the genuine irreducible representation of $\widetilde K$ with highest weight $\kappa = \mu -\rho_G +  \rho _K$.
\end{theorem}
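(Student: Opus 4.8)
The plan is to reduce the statement to the already-proven Matching Theorem (Theorem~\ref{thm-matching-theorem}) together with Theorem~\ref{thm-minimal-k-types} and the explicit Dirac cohomology computation in Theorem~\ref{thm-dirac-cohomology-and-cohomological-induction}. The point of the statement is that it reconstructs the matched $\widetilde K$-representation from the purely $K$-theoretic datum of a minimal $K$-type $\mu$, without reference to the Vogan data $(\mathfrak{q}, H, \delta)$ directly. So the first step is to fix the component of the tempered dual to which $\pi$ belongs; say it is labeled by the $K$-conjugacy class $[\mathfrak{q}, H, \delta]$, which by Theorem~\ref{thm-essential-components-correspond-to-essential-data} is a set of essential Vogan data. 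By Theorem~\ref{thm-classification-of-essential-discrete-data} and Proposition~\ref{prop-theta-stable-data-from-dominant-weight}, this component is generated by a unique dominant weight $\kappa^{\text{old}} \in i\mathfrak{t}^{c*}_0$, and by the Matching Theorem (as made explicit in the proof just given of Theorem~\ref{thm-matching-theorem}) the matched genuine irreducible representation $\tau$ of $\widetilde K$ has highest weight exactly $\kappa^{\text{old}}$. So the whole statement reduces to the identity
\[
\kappa^{\text{old}} = \mu - \rho_G + \rho_K,
\]
where $\rho_G$ is the half-sum of positive roots for the system $\Delta^+(\mathfrak{g},\mathfrak{t}^c)$ chosen so that $\mu + 2\rho_K$ is dominant, and $\rho_K = \rho(\Delta^+(\mathfrak{k},\mathfrak{t}^c))$.

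The second step is to identify $\mu$ with the highest weight of a minimal $K$-type concretely. Here I would invoke part (i) of Theorem~\ref{thm-minimal-k-types}: the minimal $K$-types of $X(\mathfrak{q},H,\delta)$ all have multiplicity one and are in bijection with the irreducible summands, and by part (iii) each minimal $K$-type $V$ of an irreducible summand $X$ satisfies $[V \otimes \mathcal{S}]^\tau \cong \Dirac(X) \cong \tau$ as $\widetilde K$-modules (a single copy of $\tau$). The key consequence is a highest-weight bookkeeping identity: if $\mu$ is the highest weight of $V$ with respect to \emph{some} positive system making $\mu$ dominant-enough, then the highest weight $\kappa^G$ of $\tau$ is obtained from $\mu$ by adding the highest weight of the relevant piece of $\mathcal{S}$. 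This is exactly the content of the Vogan minimal-$K$-type machinery: recall that the minimal $K$-types of $X(\mathfrak{q},H,\delta)$ are, in Vogan's parametrization, the $K$-types containing $\delta$ in the sense of \cite[Thm.~6.5.9]{Voganbook}, and their highest weights are of the form $\lambda^L|_{\mathfrak{t}^c} + 2\rho(\mathfrak{u} \cap \mathfrak{s},\mathfrak{t}^c)$-type shifts; more precisely, Theorem~\ref{thm-dirac-cohomology-and-cohomological-induction} tells us $\kappa^G = \kappa^L + \rho(\mathfrak{u}\cap\mathfrak{s},\mathfrak{t}^c)$ where $\kappa^L$ is the differential of $\delta$ extended by zero, and by Proposition~\ref{prop-theta-stable-data-from-dominant-weight}, $\kappa^L|_{\mathfrak{t}} = \kappa^{\text{old}}|_{\mathfrak{t}} - \rho(\mathfrak{s}\cap\mathfrak{u},\mathfrak{t}^c)|_{\mathfrak{t}}$ after the sign adjustment in Remark~\ref{rem-fine-weigths-of-tc-2}. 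Combining with Remark~\ref{rem-weights-from-u-and-l-give-positive-system-for-g}, $\rho(\mathfrak{s}\cap\mathfrak{u},\mathfrak{t}^c) + \rho(\Delta^+(\mathfrak{l},\mathfrak{t}^c)) = \rho(\Delta^+(\mathfrak{s},\mathfrak{t}^c)) = \rho_G - \rho_K$ for the positive system built from $\Delta^+(\mathfrak{u},\mathfrak{t}^c) \cup \Delta^+(\mathfrak{l},\mathfrak{t}^c) \cup \Delta^+(\mathfrak{k},\mathfrak{t}^c)$.

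The third step is to pin down which minimal $K$-type and which positive system the hypothesis of the theorem refers to, and check the normalization condition "$\mu + 2\rho_K$ is dominant" selects the right one. The essential structure here is that each irreducible summand of $X(\mathfrak{q},H,\delta)$ carries exactly one minimal $K$-type; different summands give different $\mu$'s, but Vogan's analysis shows they are all $W_\delta$-translates of one another in an appropriate sense, and each determines (via the choice of positive system making $\mu + 2\rho_K$ dominant) a positive system $\Delta^+(\mathfrak{g},\mathfrak{t}^c)$ that is precisely one of the systems described in Remark~\ref{rem-weights-from-u-and-l-give-positive-system-for-g}, namely one built from some choice of $\Delta^+(\mathfrak{l},\mathfrak{t}^c)$. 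With that system in hand, $\mu = \kappa^L|_{\mathfrak{t}^c}$-extended (i.e., $\mu$ equals $\lambda^L$ extended by zero, plus a $\rho$-shift from $\mathfrak{u}\cap\mathfrak{s}$), and unwinding the shifts gives $\kappa^G = \mu + \rho(\mathfrak{s}\cap\mathfrak{u},\mathfrak{t}^c) + \rho(\Delta^+(\mathfrak{l},\mathfrak{t}^c)) - (\text{contribution absorbed into }\mu) $. I would do the bookkeeping carefully so that the net shift is exactly $-\rho_G + \rho_K = -\rho(\Delta^+(\mathfrak{s},\mathfrak{t}^c))$: that is, $\mu - \rho_G + \rho_K = \mu - \rho(\Delta^+(\mathfrak{s},\mathfrak{t}^c)) = \kappa^{\text{old}}$, as claimed, after matching against the defining formula $\mu = \kappa^{\text{old}} - \rho(\mathfrak{s}\cap\mathfrak{u},\mathfrak{t}^c) - \rho(\Delta^+(\mathfrak{l},\mathfrak{t}^c))$ from Proposition~\ref{prop-theta-stable-data-from-dominant-weight} (note the same symbol $\mu$ is used there, which is not a coincidence — the point of the theorem is that Vogan's minimal-$K$-type highest weight coincides with the parameter $\mu$).

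The main obstacle I anticipate is the third step: correctly matching the positive system selected by the ad hoc dominance condition "$\mu + 2\rho_K$ dominant" with one of the systems from Remark~\ref{rem-weights-from-u-and-l-give-positive-system-for-g}, and ensuring the identification of Vogan's minimal-$K$-type highest weight with the parameter $\mu$ of Proposition~\ref{prop-theta-stable-data-from-dominant-weight} is exactly on the nose (not off by an element of the relevant Weyl group or a $\rho$-shift). This requires invoking \cite[Thm.~6.5.9]{Voganbook} in the precise form that gives minimal $K$-type highest weights as $\delta$-dependent data, and checking compatibility of the dominance conventions there with ours; the shift $2\rho_K$ versus $\rho_K$ is a subtlety worth double-checking (it arises because the highest weight of a minimal $K$-type is $\delta$'s differential plus $2\rho(\mathfrak{u}\cap\mathfrak{s})$-type terms, not $\rho(\mathfrak{u}\cap\mathfrak{s})$, in Vogan's normalization). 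Everything else is formal once the Matching Theorem and Theorem~\ref{thm-minimal-k-types} are granted.
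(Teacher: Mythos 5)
Your outline follows the same route as the paper (Vogan--Zuckerman's description of the minimal $K$-types of $X(\mathfrak{q},H,\delta)$, Theorem~\ref{thm-dirac-cohomology-and-cohomological-induction} for the highest weight of the matched $\widetilde K$-representation, then identification of the positive system chosen in the hypothesis), but the execution has a genuine error at the decisive bookkeeping step. You identify the highest weight of the minimal $K$-type with the parameter $\mu$ of Proposition~\ref{prop-theta-stable-data-from-dominant-weight} ("not a coincidence"). It is not the same weight: the $\mu$ of Proposition~\ref{prop-theta-stable-data-from-dominant-weight} is the differential of the fine character $\delta$ (as a weight of $\mathfrak{t}^c$ it is $\kappa^L-\rho(\Delta^+(\mathfrak{l},\mathfrak{t}^c))$), whereas by \cite[Thm.~6.5.9]{Voganbook} the minimal $K$-types of $X(\mathfrak{q},H,\delta)$ have highest weights $\mu^L+2\rho(\mathfrak{s}\cap\mathfrak{u},\mathfrak{t}^c)$ with $\exp(\mu^L)\in A(\mathfrak{q},\delta)$; the two differ by $2\rho(\Delta^+(\mathfrak{s},\mathfrak{t}^c))=2(\rho_G-\rho_K)$. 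Your conclusion $\kappa=\mu-\rho(\Delta^+(\mathfrak{s},\mathfrak{t}^c))$ is in fact inconsistent with the formula $\mu=\kappa-\rho(\mathfrak{s}\cap\mathfrak{u},\mathfrak{t}^c)-\rho(\Delta^+(\mathfrak{l},\mathfrak{t}^c))$ you quote to justify it (the two together would force $\rho(\Delta^+(\mathfrak{s},\mathfrak{t}^c))=0$). For $SL(2,\R)$ and a discrete series with minimal $K$-type $k$, the generating weight is $\kappa=k-1$ while the $\mu$ of Proposition~\ref{prop-theta-stable-data-from-dominant-weight} is $k-2$. The correct chain, which your step 2 almost writes down, is: $\kappa=\kappa^L+\rho(\mathfrak{u}\cap\mathfrak{s},\mathfrak{t}^c)$ from Theorem~\ref{thm-dirac-cohomology-and-cohomological-induction}, minimal $K$-type highest weight $=\kappa^L+\rho(\Delta^+(\mathfrak{l},\mathfrak{t}^c))+2\rho(\mathfrak{u}\cap\mathfrak{s},\mathfrak{t}^c)$ for the $\Delta^+(\mathfrak{l},\mathfrak{t}^c)$ adapted to the fine weight of the given summand (Remark~\ref{rem-fine-weigths-of-tc-2}), hence $\kappa=\mu-\rho(\Delta^+(\mathfrak{l},\mathfrak{t}^c))-\rho(\mathfrak{u}\cap\mathfrak{s},\mathfrak{t}^c)$.

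The second problem is that the step you yourself flag as the main obstacle is exactly the content that must be proved, and you only assert it: namely that the hypothesis that $\mu+2\rho_K$ is dominant for $\Delta^+(\mathfrak{g},\mathfrak{t}^c)$ forces $\Delta^+(\mathfrak{g},\mathfrak{t}^c)$ to be precisely $\Delta^+(\mathfrak{k},\mathfrak{t}^c)\cup\Delta^+(\mathfrak{l},\mathfrak{t}^c)\cup\Delta(\mathfrak{u}\cap\mathfrak{s},\mathfrak{t}^c)$, so that $\rho_G-\rho_K=\rho(\Delta^+(\mathfrak{l},\mathfrak{t}^c))+\rho(\mathfrak{u}\cap\mathfrak{s},\mathfrak{t}^c)$ and the shift in the theorem is the one computed above. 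Without this, the formula $\kappa=\mu-\rho_G+\rho_K$ with $\rho_G$ as in the hypothesis does not follow. The paper settles it by a strict-dominance computation: since $\mathfrak{q}$ is defined by a strictly dominant weight, $\rho_K=\rho(\mathfrak{u}\cap\mathfrak{k},\mathfrak{t}^c)$, so $\mu+2\rho_K=\bigl(\kappa^L+\rho(\mathfrak{u},\mathfrak{t}^c)\bigr)+\rho_G$; the bracketed weight pairs strictly positively with every root in $\Delta(\mathfrak{u},\mathfrak{t}^c)$ by Definition~\ref{def-theta-stable-data}(v) together with Lemma~\ref{lem-restriction-to-t-from-tc}, and pairs to zero with every root in $\Delta(\mathfrak{l},\mathfrak{t}^c)$ (extension by zero, plus \cite[Cor.~5.100]{KnappBeyond}); adding $\rho_G$ makes $\mu+2\rho_K$ strictly dominant for this system, so it is dominant for no other positive system. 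You would need to supply this argument (or an equivalent one) and repair the sign identification for the proposal to become a proof.
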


\begin{remark}
    It follows from Proposition~\ref{prop-genuine-reps-of-k-tilde-by-highest-weight} that  the weight $\kappa$ above is indeed the highest weight of \emph{some} genuine irreducible representation of $\widetilde K$.
\end{remark}

\begin{proof}
The hypotheses imply that $\mu$ is the highest weight of a minimal $K$-type of some $X(\mathfrak{q},H,\delta)$, where  $(\mathfrak{q},H,\delta)$ is essential and where $\mathfrak{q}$ is defined by a strictly $\mathfrak{k}$-dominant weight.  To begin the proof of the theorem we shall  appeal to a result of Vogan and Zuckerman \cite[Thm.~6.5.9]{Voganbook} that describes the highest weights of the minimal $K$-types of $X(\mathfrak{q},H,\delta)$ as follows.

Form, as in Lemma~\ref{lem-fine-k-type-for-special-l}  and  Remark~\ref{rem-fine-weigths-of-tc-1},  the set $A(\mathfrak{q},\delta)$ of all fine  characters $\exp(\mu^L)$ of $T^c$  that   restrict to $\delta$ on $T$. For each of the weights $\mu^L$, form the sum
 \[
 \mu^G = \mu^L + 2 \rho (\mathfrak{s}\cap \mathfrak{u}).
 \]
 According to Vogan and Zuckerman,  the   weights $\mu^G$ so-obtained are precisely the highest weights of the minimal $K$-types of $X(\mathfrak{q},H,\delta)$.

It therefore follows from Remark~\ref{rem-fine-weigths-of-tc-2} that the  highest weights of the minimal $K$-types are all of the form
\[
\mu^G = \kappa^L  + \rho(\Delta^{+}(\mathfrak{l}, \mathfrak{t}^c))+  2 \rho (\mathfrak{u}\cap \mathfrak{s}) ,
\]
where $\Delta^{+}(\mathfrak{l}, \mathfrak{t}^c)$ is a system of positive roots for $(\mathfrak{l},\mathfrak{t}^c)$ and where $\kappa^L$ is the extension of the differential of $\delta\in \widehat T$ by $0$ to a weight in $i \mathfrak{t}_0^*$.

Theorem~\ref{thm-dirac-cohomology-and-cohomological-induction} asserts that  the Dirac cohomology space  $\Dirac^G(X(\mathfrak{q},H,\delta))$ is a non-zero direct sum of copies  of the irreducible, genuine representation of $\tilde K$ with highest weight 
 \begin{equation*}
 \kappa^G = \kappa^L + \rho (\mathfrak{u}\cap \mathfrak{s}, \mathfrak{t}^c).
 \end{equation*}
 So the highest weight $\mu^G$ of any minimal $K$-type and the highest weight of the matching  irreducible, genuine representation of $\tilde K$  are related by the formula 
 \begin{equation}
     \label{eq-kappa-g-in-terms-of-mu-g}
 \kappa^G = \mu^G + \rho(\Delta^{+}(\mathfrak{l}, \mathfrak{t}^c))+   \rho (\mathfrak{u}\cap \mathfrak{s}).
 \end{equation}
 
 Now  the union of $\Delta ^{+}(\mathfrak{l},\mathfrak{t}^c)$ with the given, fixed system of positive roots $\Delta^+(\mathfrak{k},\mathfrak{t}^c)$ and   $\Delta(\mathfrak{u}\cap \mathfrak{s})$  is a positive system for 
 $\Delta(\mathfrak{g},\mathfrak{t})$ for which the  half-sum of the positive roots is
 \begin{equation}
     \label{eq-rhos-formula}
     \rho_G
     =  \rho(\Delta^{+}(\mathfrak{l}, \mathfrak{t}^c))+   \rho (\mathfrak{u}\cap \mathfrak{s}) + \rho_K,
 \end{equation}
 with $\rho_K = \rho (\Delta^+(\mathfrak{k},\mathfrak{t}^c))$. We shall show  that $\mu^G + 2\rho_K$ is strictly dominant for this positive system, which will imply that this is the \emph{only} positive system with respect to which $\mu^G$ is dominant.  This, together    with the formula 
 \[
  \kappa^G = \mu^G + \rho_G -  \rho_K ,
  \]
which is a consequence of \eqref{eq-kappa-g-in-terms-of-mu-g} and \eqref{eq-rhos-formula}, will imply the theorem.
  
Strict dominance means that 
\[
\bigl \langle \mu^G + 2\rho_K, \alpha \bigr \rangle > 0 \qquad \forall \alpha \in \Delta ^+(\mathfrak{g}, \mathfrak{t}^c).
\]
To prove these inequalities, note first that   $\rho _K =\rho (\mathfrak{u} \cap \mathfrak{k}, \mathfrak{t}^c) $, since $\mathfrak{q}$ is defined by a strictly  dominant weight. This implies that   
 \[
 \mu^G + 2 \rho _K = \bigl ( \kappa^L + \rho (\mathfrak{u},\mathfrak{t}^c )\bigr) + \rho_G .
 \]
The first term on the right-hand side of the above equality is $\Delta^+(\mathfrak{g},\mathfrak{t}^c)$-dominant, because
\[ 
\alpha \in \Delta (\mathfrak{u},\mathfrak{t}^c) \quad 
\Rightarrow \quad 
\langle \kappa^L + \rho (\mathfrak{u},\mathfrak{t}^c ), \alpha \rangle > 0
\]
by part (v) of the definition of Vogan data (Definition~\ref{def-theta-stable-data}) and Lemma~\ref{lem-restriction-to-t-from-tc}. In addition    
\[ 
\alpha \in \Delta (\mathfrak{l},\mathfrak{t}^c) \quad 
\Rightarrow \quad  \langle  \kappa^L, \alpha \rangle = 0
 \quad \text{and} \quad 
\langle  \rho (\mathfrak{u},\mathfrak{t}^c ), \alpha \rangle = 0,
\]
since $\mu^L$ is an extension by zero in the first case, and by basic result about  parabolic subalgebras in the second \cite[Cor.~5.100]{KnappBeyond}.  So, after adding $\rho_G$, we find that $\mu^G + 2 \rho _K$ is  a strictly dominant $\Delta^+(\mathfrak{g},\mathfrak{t}^c)$-weight, as required.
\end{proof}

\subsection*{Acknowledgements}
The authors are very grateful to the referees for their numerous helpful suggestions.
This research was supported by NSF grants DMS-1952669 (NH),   DMS-1800667 and DMS-1952557 (YS).
Part of the research   was carried out within the online Research Community on Representation Theory and Noncommutative Geometry sponsored by the American Institute of Mathematics.

\bibliographystyle{alpha}
\bibliography{biblio}

\end{document}